\newtheorem{thm}{Theorem}[section]
\newtheorem{lem}[thm]{Lemma}
\newtheorem{cor}[thm]{Corollary}
\newtheorem{conj}[thm]{Conjecture}
\newtheorem{prop}[thm]{Proposition}
\newtheorem{question}[thm]{Question}
\newtheorem{fact}[thm]{Fact}
\newtheorem*{thmm}{Theorem}
\theoremstyle{remark}
\newtheorem{rem}[thm]{Remark}
\theoremstyle{definition}
\newtheorem{defn}[thm]{Definition}
\numberwithin{equation}{section}
\begin{document}

\vfuzz0.5pc
\hfuzz0.5pc % Don't bother to report 
          % overfull boxes if overage is < 6pt

\newcommand{\rank}{\mathop{\mathrm{rank}}}
\newcommand{\codim}{\mathop{\mathrm{codim}}}
\newcommand{\Ord}{\mathop{\mathrm{Ord}}}
\newcommand{\Var}{\mathop{\mathrm{Var}}}
\newcommand{\Ext}{\mathop{\mathrm{Ext}}}
\newcommand{\EXT}{\mathop{{\mathcal E}\mathrm{xt}}}
\newcommand{\Pic}{\mathop{\mathrm{Pic}}}
\newcommand{\Spec}{\mathop{\mathrm{Spec}}}
\newcommand{\Jac}{\mathop{\mathrm{Jac}}}
\newcommand{\Div}{\mathop{\mathrm{Div}}}
\newcommand{\sgn}{\mathop{\mathrm{sgn}}}
\newcommand{\supp}{\mathop{\mathrm{supp}}}
\newcommand{\Hom}{\mathop{\mathrm{Hom}}}
\newcommand{\Sym}{\mathop{\mathrm{Sym}}}
\newcommand{\nilrad}{\mathop{\mathrm{nilrad}}}
\newcommand{\Ann}{\mathop{\mathrm{Ann}}}
\newcommand{\Proj}{\mathop{\mathrm{Proj}}}
\newcommand{\mult}{\mathop{\mathrm{mult}}}
\newcommand{\Bs}{\mathop{\mathrm{Bs}}}
\newcommand{\Span}{\mathop{\mathrm{Span}}}
\newcommand{\IM}{\mathop{\mathrm{Im}}}
\newcommand{\Hol}{\mathop{\mathrm{Hol}}}
\newcommand{\End}{\mathop{\mathrm{End}}}
\newcommand{\CH}{\mathop{\mathrm{CH}}}
\newcommand{\Exec}{\mathop{\mathrm{Exec}}}
\newcommand{\SPAN}{\mathop{\mathrm{span}}}
\newcommand{\birat}{\mathop{\mathrm{birat}}}
\newcommand{\cl}{\mathop{\mathrm{cl}}}
\newcommand{\rat}{\mathop{\mathrm{rat}}}
\newcommand{\Bir}{\mathop{\mathrm{Bir}}}
\newcommand{\aut}{\mathop{\mathrm{aut}}}
\newcommand{\Aut}{\mathop{\mathrm{Aut}}}
\newcommand{\eff}{\mathop{\mathrm{eff}}}
\newcommand{\nef}{\mathop{\mathrm{nef}}}
\newcommand{\amp}{\mathop{\mathrm{amp}}}
\newcommand{\DIV}{\mathop{\mathrm{Div}}}
\newcommand{\Bl}{\mathop{\mathrm{Bl}}}
\newcommand{\Cox}{\mathop{\mathrm{Cox}}}
\newcommand{\NE}{\mathop{\mathrm{NE}}}
\newcommand{\NM}{\mathop{\mathrm{NM}}}
\newcommand{\Mov}{\mathop{\mathrm{Mov}}}
\newcommand{\Conv}{\mathop{\mathrm{Conv}}}

\newcommand{\claimref}[1]{Claim \ref{#1}}
\newcommand{\thmref}[1]{Theorem \ref{#1}}
\newcommand{\propref}[1]{Proposition \ref{#1}}
\newcommand{\lemref}[1]{Lemma \ref{#1}}
\newcommand{\coref}[1]{Corollary \ref{#1}}
\newcommand{\remref}[1]{Remark \ref{#1}}
\newcommand{\conjref}[1]{Conjecture \ref{#1}}
\newcommand{\questionref}[1]{Question \ref{#1}}
\newcommand{\defnref}[1]{Definition \ref{#1}}
\newcommand{\secref}[1]{\S\ref{#1}}
\newcommand{\ssecref}[1]{\ref{#1}}
\newcommand{\sssecref}[1]{\ref{#1}}

\newcommand{\RED}{{\mathrm{red}}}
\newcommand{\tors}{{\mathrm{tors}}}
\newcommand{\eq}{\Leftrightarrow}

\newcommand{\mapright}[1]{\smash{\mathop{\longrightarrow}\limits^{#1}}}
\newcommand{\mapleft}[1]{\smash{\mathop{\longleftarrow}\limits^{#1}}}
\newcommand{\mapdown}[1]{\Big\downarrow\rlap{$\vcenter{\hbox{$\scriptstyle#1$}}$}}
\newcommand{\smapdown}[1]{\downarrow\rlap{$\vcenter{\hbox{$\scriptstyle#1$}}$}}

\newcommand{\A}{{\mathbb A}}
\newcommand{\I}{{\mathcal I}}
\newcommand{\J}{{\mathcal J}}
\newcommand{\CO}{{\mathcal O}}
\newcommand{\C}{{\mathcal C}}
\newcommand{\BC}{{\mathbb C}}
\newcommand{\BQ}{{\mathbb Q}}
\newcommand{\m}{{\mathcal M}}
\newcommand{\h}{{\mathcal H}}
\newcommand{\Z}{{\mathcal Z}}
\newcommand{\BZ}{{\mathbb Z}}
\newcommand{\W}{{\mathcal W}}
\newcommand{\Y}{{\mathcal Y}}
\newcommand{\T}{{\mathcal T}}
\newcommand{\BP}{{\mathbb P}}
\newcommand{\CP}{{\mathcal P}}
\newcommand{\G}{{\mathbb G}}
\newcommand{\BR}{{\mathbb R}}
\newcommand{\D}{{\mathcal D}}
\newcommand{\DD}{{\mathcal D}}
\newcommand{\LL}{{\mathcal L}}
\newcommand{\f}{{\mathcal F}}
\newcommand{\E}{{\mathcal E}}
\newcommand{\BN}{{\mathbb N}}
\newcommand{\N}{{\mathcal N}}
\newcommand{\K}{{\mathcal K}}
\newcommand{\R} {{\mathbb R}}
\newcommand{\PP} {{\mathbb P}}
\newcommand{\Pp} {{\mathbb P}}
\newcommand{\BF}{{\mathbb F}}
\newcommand{\closure}[1]{\overline{#1}}
\newcommand{\EQ}{\Leftrightarrow}
\newcommand{\imply}{\Rightarrow}
\newcommand{\isom}{\cong}
\newcommand{\embed}{\hookrightarrow}
\newcommand{\tensor}{\mathop{\otimes}}
\newcommand{\wt}[1]{{\widetilde{#1}}}
\newcommand{\ol}{\overline}
\newcommand{\ul}{\underline}

\newcommand{\bs}{{\backslash}}
\newcommand{\CS}{{\mathcal S}}
\newcommand{\CA}{{\mathcal A}}
\newcommand{\Q} {{\mathbb Q}}
\newcommand{\F} {{\mathcal F}}
\newcommand{\sing}{{\text{sing}}}
\newcommand{\U} {{\mathcal U}}
\newcommand{\B}{{\mathcal B}}
\newcommand{\X}{{\mathcal X}}
\newcommand{\CG}{{\mathcal G}}
\newcommand{\CQ}{{\mathcal Q}}

% Javier-Macro
\newcommand{\ECS}[1]{E_{#1}(X)}
\newcommand{\CV}[2]{{\mathcal C}_{#1,#2}(X)}

\title[Rationality of Euler-Chow Series]{Rationality of Euler-Chow Series and Finite Generation
of Cox Rings}

\author{Xi Chen}
\address{Department of Mathematics and Statistics\\
University of Alberta\\
Edmonton, Alberta T6G 2G1, CANADA}
\email{xichen@math.ualberta.ca}
\thanks{Research of Chen was supported in part by a grant from the Natural Sciences and Engineering Research Council of Canada.}

\author{E. Javier Elizondo}
\address{Instituto de Matem\'aticas\\ Universidad Nacional Aut\'onoma de
 M\'exico\\Ciudad Universitaria\\M\'exico DF 04510, M\'exico}
\email{javier@math.unam.mx}
\thanks{Research of Elizondo was supported in part by CONACYT 101519 and DGAPA 107012.}

\author{Yanhong Yang}
\address{Department of Mathematics\\
Columbia University\\
Room 509, MC 4406, 2990 Broadway\\
New York, NY 10027, USA}
\email{yhyang@math.columbia.edu}

\date{July 11, 2015}

\begin{center}To Blaine Lawson on the occasion of his 70th birthday.\end{center}

\begin{abstract}
We study the rationality of the Euler-Chow series $E^1(X)$ of codimension one cycles on a projective variety $X$ and its relation with the effective cone and the Cox ring of $X$. Among other things, we prove that $E^1(X)$ is transcendental if the cone $\NE^1(X)$ of pseudo-effective divisors on $X$ has infinitely many extremal rays generated by effective divisors.
On the other hand, we give examples showing that the converse fails.
In addition, we give an example where $E^1(X)$ is rational and
$\Cox(X)$ is infinitely generated. Finally, we compute $E^1(X)$ for Del Pezzo surfaces $X$.
\end{abstract}

\maketitle

\section{Introduction}\label{SECINTRO}

\subsection{Statement of results}

Zariski was the first to propose studying the dimensions of linear
systems in $\Pp^2$ passing through a fixed number of points in general
position with a given multiplicity. There has been a lot of work in
that direction since then. We can say that in general it is an
important and interesting problem to compute the dimensions of
different linear systems. It turns out to be a very hard problem as we will see that the corresponding generating function, called {\em Euler-Chow series}, is transcendental over a polynomial ring, if we have $9$ or more general points in $\PP^2$.

To answer Zariski's question, or more generally, to compute the dimension
$h^0(D)$ of every divisor $D$ on a smooth complex projective variety $X$ of dimension $n$, we introduce the generating function of $h^0(D)$ as
\begin{equation}\label{EC-1}
\ECS{n-1} = E^1(X) = \sum_{D \in \Pic(X)} h^0(D) \cdot t^D
\end{equation}
where $t$ is a formal variable and $E_{n-1}(X)$, written alternatively as $E^1(X)$, is called the $(n-1)$-dimensional Euler-Chow series of $X$.
For simplicity, we assume that the Picard group $\Pic(X)$ of $X$ is finitely generated, i.e., $H^1(\CO_X) = 0$. 

The precise definition of $E_\bullet(X)$ will be given in \secref{SECPREM}.

In general, $E^1(X)$ is very hard to compute. In the few cases where
it is known, including toric varieties and the blow-ups of $\PP^2$ at points lying on a line (cf. \cite{E} and \cite{EK2}), $E^1(X)$ turns out to be a rational function in
$\BQ(t_1, t_2, ..., t_m)$. Thus, we may ask the natural questions: Is
$E^1(X)$ always rational? If it is not, under what conditions is $E^1(X)$
rational or irrational? One of the main purposes of this paper is to address these two questions.

It is suspected that the rationality of $E^1(X)$ has much to do with the {\em Cox ring} of the variety. In the above-mentioned cases of toric varieties and the blow-ups of $\PP^2$ at points lying on a line, the Cox ring
or the total coordinate ring (cf. \cite{C} and \cite{EKW})
\begin{equation}\label{E100}
\Cox(X) = \bigoplus_{D\in \Pic(X)} H^0(D)
\end{equation}
is noetherian, i.e., finitely generated. We have the following simple fact:

\begin{fact}\label{FACT001}
Let $X$ be a smooth projective variety whose
Picard group is a free abelian group.
If $\Cox(X)$ is finitely generated, then
$E^1(X)$ is rational.
\end{fact}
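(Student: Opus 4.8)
The plan is to identify $E^1(X)$ with the multigraded Hilbert series of the Cox ring and then extract rationality from a finite graded free resolution. First I would observe that, by observations A and B above together with the defining formula \eqref{EC-1}, the series $E^1(X)$ is exactly the Hilbert series of $R := \Cox(X) = \bigoplus_{D\in\Pic(X)} H^0(X,\CO_X(D))$, regarded as a ring graded by the finitely generated free abelian group $A := \Pic(X)$: the piece $R_D = H^0(X,\CO_X(D))$ is nonzero precisely when $D\in M$, and then $\dim_k R_D = h^0(X,\CO(D))$, so $E^1(X) = \sum_{D\in A}(\dim_k R_D)\,t^D$. Since $X$ is a projective variety, $R_0 = H^0(X,\CO_X) = k$, and $R$ is a finitely generated $k$-algebra by hypothesis; taking homogeneous components of a finite generating set and discarding the degree-$0$ parts (which lie in $R_0=k$), I may choose finitely many homogeneous generators $x_1,\dots,x_m$ of degrees $D_1,\dots,D_m \in M^* = M\setminus\{0\}$.

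Next I would compare $R$ with a polynomial ring. Let $S = k[y_1,\dots,y_m]$ be graded by $A$ with $\deg y_i = D_i$, and let $\phi\colon S\twoheadrightarrow R$ send $y_i\mapsto x_i$, so $R\cong S/K$ for a homogeneous ideal $K$. The Hilbert series of $S$ is $H_S = \prod_{i=1}^m (1-t^{D_i})^{-1}$: this is a legitimate element of $\BZ[[M]]$ because each $t^{D_i}$ lies in $I$, and since $\BZ[[M]] = \widehat{\BZ[M]}$ is the $I$-adic completion with $\bigcap_l I^l = \{0\}$, each factor $1-t^{D_i}$ is a unit with inverse $\sum_{k\ge 0} t^{kD_i}$; expanding the product recovers $\sum_{\alpha\in\BN^m} t^{\sum_i \alpha_i D_i}$, which is indeed the Hilbert series of $S$. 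In particular $\big(\prod_i(1-t^{D_i})\big)H_S = 1$.

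The key step is to resolve $R$ as an $S$-module. Because $S$ is a polynomial ring, Hilbert's syzygy theorem supplies a finite graded free resolution
\[
0 \to F_\ell \to \cdots \to F_1 \to F_0 \to R \to 0,
\qquad F_i = \bigoplus_j S(-E_{ij}),
\]
with $F_0 = S$ and all twists $E_{ij}\in M$. Comparing dimensions in each graded piece yields the additive identity $H_R = \sum_i (-1)^i H_{F_i}$ in $\BZ[[M]]$, where $H_{F_i} = \big(\sum_j t^{E_{ij}}\big)H_S$. Multiplying through by $g := \prod_{i=1}^m (1-t^{D_i}) \in \BZ[M]$ and using $g\,H_S = 1$, I obtain
\[
g \cdot E^1(X) = \sum_i (-1)^i \sum_j t^{E_{ij}} =: h,
\]
and since every $E_{ij}\in M$, the right-hand side $h$ lies in $\BZ[M]$. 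As $g\neq 0$, this is exactly the rationality required by Definition~\ref{DEF1}.

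The part needing the most care is the claim that all twists $E_{ij}$ remain inside the monoid $M$, so that $h\in\BZ[M]$ rather than merely in the group ring $\BZ[A]$. This follows by induction along the resolution: $S$ is supported in degrees in $M$, so $F_0=S$ has twist $0\in M$, and each higher syzygy module sits inside $\bigcup_{j}(E_{i-1,j}+M)\subseteq M$ because the homogeneous differentials only raise degree. The remaining routine verifications — that the infinite products and the alternating-sum identity $H_R=\sum_i(-1)^iH_{F_i}$ are valid in $\BZ[[M]]$ — both rest on the positivity of $M$ encoded in \eqref{E250} and on the identification $\BZ[[M]] = \widehat{\BZ[M]}$.
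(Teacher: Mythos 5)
Your proof is correct, and a point of comparison worth knowing: the paper states Fact \ref{FACT001} \emph{without} proof (it is offered as a ``simple fact''), so there is no in-paper argument to measure against; yours is the standard and, I would say, the intended one — identify $E^1(X)$ via observations A, B and \eqref{EC-1} with the $\Pic(X)$-graded Hilbert series of $R=\Cox(X)$ and run Hilbert--Serre. The two points that genuinely need care are both handled properly: first, the factors $1-t^{D_i}$ are units in $\BZ[[M]]=\widehat{\BZ[M]}$ exactly because $D_i\in M^*$ and the positivity \eqref{E250} (which for projective $X$ can be seen by pairing with an ample class) keeps every graded piece of $S$ finite-dimensional and makes the $I$-adic completion argument valid; second, your induction showing that all twists $E_{ij}$ lie in $M$ rather than merely in $\Pic(X)$ is precisely what is needed for the numerator to land in $\BZ[M]$, as Definition \ref{DEF1} requires. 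One step you should flag explicitly: Hilbert's syzygy theorem is being invoked in the multigraded setting, which is legitimate here because the grading by the submonoid generated by $D_1,\dots,D_m$ is positive (again by \eqref{E250}), so graded Nakayama applies and a finite graded free resolution exists; alternatively, you could bypass syzygies entirely by the classical Hilbert--Serre induction on the number of variables via the exact sequences $0\to K\to N\xrightarrow{\;y_i\;} N\to C\to 0$. Finally, note the contrast with the rationality mechanism the paper does develop (Proposition \ref{PROP200} and Lemma \ref{LEM202}): there rationality comes from subdividing rational polyhedral cones into simplicial ones with polynomial coefficients, which presupposes a chamber structure on the effective cone on which $h^0$ is piecewise polynomial — available in their quartic $K3$ computation but not handed to you by finite generation alone; your Hilbert-series route needs no such geometric input, which is why it proves the fact uniformly for all $X$ with finitely generated Cox ring.
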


The first purpose of our paper is to further investigate the relation between the rationality of $E^1(X)$ and the finite generation of $\Cox(X)$. Given the above fact, it is natural to ask whether the converse of the above statement holds:

\begin{question}\label{Q001}
Under the same hypothesis as above, does rationality of $E^1(X)$ imply
finite generation of $\Cox(X)$?
\end{question}

The answer to this question is negative. Our first result is the following counterexample.

\begin{thm}\label{THM000}
Let $S$ be a smooth quartic surface in $\PP^3$ and $X = \Bl_p S$ be the
blow-up of $S$ at a point $p\in S$. Then $\Cox(X)$ is not finitely generated and $E_1(X)$
is rational for $(S,p)$ very general.
\end{thm}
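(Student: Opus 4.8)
The plan is to describe the geometry of $X$ explicitly, reduce the non-finite-generation of $\Cox(X)$ to the failure of a single boundary nef class to be semi-ample, and then compute $E_1(X)=E^1(X)$ (the two coincide since $\dim X=2$) directly. First I would fix the numerical setup. A general smooth quartic $S\subset\PP^3$ is a K3 surface with $\Pic(S)=\BZ H$ by Noether--Lefschetz, where $H$ is the hyperplane class and $H^2=4$; hence $\Pic(X)=\BZ H\oplus\BZ E$ with $H^2=4$, $H\cdot E=0$, $E^2=-1$ and $K_X=E$. The distinguished class is $F=H-2E$, the strict transform of the tangent hyperplane section $C_0=T_pS\cap S$, which for general $(S,p)$ has an ordinary node at $p$. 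Thus $F$ is a smooth irreducible curve of genus $2$ with $F^2=0$ and $F\cdot E=2$, so $F$ is nef (an irreducible curve of non-negative self-intersection). Using $\Pic(S)=\BZ H$, a short intersection computation identifies the two closed cones: $\NM^1(X)$ is spanned by $H$ and $F$, and $\NE^1(X)$ is spanned by $E$ and $F$; both are rational polyhedral, and $H$ is semi-ample as the pullback of the ample generator under $\pi\colon X\to S$.

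Next I would prove $\Cox(X)$ is not finitely generated. By the surface criterion recalled above (Hu--Keel together with the fact that on a surface every movable divisor is nef), $\Cox(X)$ is finitely generated iff its nef cone is rational polyhedral and every nef divisor is semi-ample; since the cone is rational polyhedral, it suffices to show that the boundary class $F$ is not semi-ample. If it were, a base point free multiple $|mF|$ would, because $F^2=0$, define a fibration $X\to\PP^1$ contracting $F$, forcing $\CO_F(F)=N_{F/X}$ to be torsion in $\Pic^0(F)=\Jac(F)$. Restricting $\CO_X(F)$ to $F$ gives $\CO_F(F)=\nu^*\CO_{C_0}(1)-2\,\nu^{-1}(p)$, a class of degree $0$ on the genus-$2$ curve $F$ (here $\nu\colon F\to C_0$ is the normalization). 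The entire non-finite-generation statement therefore reduces to: for general $(S,p)$ this class is non-torsion.

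This non-torsion statement is the main obstacle, and I would approach it variationally. Over the irreducible parameter space $\CP$ of pairs $(S,p)$ one has the family of genus-$2$ curves $F$ together with the section $\sigma$ of the relative Jacobian sending $(S,p)$ to $\CO_F(F)$. For each $N$ the locus where $\sigma$ is $N$-torsion is Zariski closed, so over $\BC$ it is enough to show $\sigma$ is not identically torsion; equivalently, since a torsion section is flat for the Gauss--Manin connection, it suffices to prove that the infinitesimal invariant $\nabla\sigma$ is nonzero for some first-order deformation of $(S,p)$. I expect the real work to be exactly this derivative computation: as $p$ and $S$ move, one must track the motion of the node $p$ and of the tangent plane $T_pS$ and show that the induced variation of $\nu^*\CO_{C_0}(1)-2\,\nu^{-1}(p)$ in $\Jac(F)$ does not vanish. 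Granting $\nabla\sigma\ne0$, the section $\sigma$ is non-torsion off a countable union of proper subvarieties of $\CP$, so $F$ is not semi-ample for general $(S,p)$ and $\Cox(X)$ is not finitely generated, answering \questionref{Q001} in the negative.

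Finally, for the rationality of $E^1(X)=\sum_{D\in M_X}h^0(X,\CO_X(D))\,t^D$, I would compute $h^0(X,aH+bE)$ on the whole effective monoid $M_X$, which by the previous step consists of all lattice points of the cone spanned by $E$ and $F$, and show it is piecewise polynomial over a rational polyhedral subdivision. Writing $c=-b$: for $b\ge0$ one has $h^0=h^0(S,aH)$, namely $2+2a^2$ for $a\ge1$ and $1$ for $a=0$; for $0\le c\le 2a-2$, Riemann--Roch and Kawamata--Viehweg vanishing applied to $aH-cE=K_X+(aH-(c+1)E)$ give $h^0=2+2a^2-\tfrac12 c(c+1)$; along $c=2a-1$ a short restriction-sequence argument (using the same non-torsion input through $K_F=\CO_F(F+E)$) gives $h^0=a+2=\chi(aH-cE)$; and along the ray $c=2a$ non-torsion yields $h^0(mF)=1$ for every $m$. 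Each region is a rational polyhedral cone or lattice ray on which $h^0$ is a genuine polynomial, so each contributes a rational function to the series, and $E^1(X)$, being their finite sum, is rational. It is worth emphasizing that this happens even though $\Cox(X)$ is not finitely generated: the failure of finite generation is visible only as the non-growth of $h^0$ along the ray $F$, yet a constant is still a polynomial and so still contributes a rational summand, which is precisely what makes $(S,p)$ a counterexample to the converse asked in \questionref{Q001}.
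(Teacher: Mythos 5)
Your architecture matches the paper's proof almost step for step: the cone identifications $\NM^1(X)=\langle \widehat{L},\widehat{C}\rangle$ and $\NE^1(X)=\langle E,\widehat{C}\rangle$ (your $F$ is the paper's $\widehat{C}=\widehat{L}-2E$), the reduction of infinite generation to non-semi-ampleness of the boundary nef class via Hu--Keel and the surface criterion, the reduction of that to non-torsion of the degree-zero class $\CO_F(F)=K_F-q_1-q_2$ on the genus-$2$ normalization, and the region-by-region computation of $h^0$ (your formulas agree exactly with the paper's after the coordinate change $c=2a-b$: e.g.\ your $2+2a^2-\tfrac12 c(c+1)$ is the paper's $2ab-a-\tfrac12 b(b-1)+2$), capped off by summing polynomials over lattice points of rational polyhedral cones, which the paper proves as a standalone proposition via simplicial subdivision. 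The one genuine gap is that the single step where generality of $(S,p)$ does any work --- the non-torsion of $K_F-q_1-q_2$ in $\Pic^0(F)$ --- is not proved. You propose a variational argument (torsion sections are flat for Gauss--Manin, so it suffices that the infinitesimal invariant $\nabla\sigma$ be nonzero) and then proceed ``granting $\nabla\sigma\ne 0$''; but that derivative computation \emph{is} the content of the claim, and nothing in your write-up carries it out. Note that without it both halves of the theorem collapse, not just the Cox-ring half: your values $h^0(mF)=1$ on the ray $c=2a$ and $h^0=a+2$ on the line $c=2a-1$ (hence the rationality computation itself) also rest on this non-torsion input.

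The gap can be closed much more cheaply than by an infinitesimal computation, and this is what the paper does. Fix a plane $\Lambda\subset\PP^3$ and let $W$ be the family of quartics tangent to $\Lambda$; then $S\mapsto S\cap\Lambda$ dominates the Severi variety $V_{4,2}$ of one-nodal plane quartics, and normalization plus the two preimages of the node gives a dominant map $V_{4,2}\to \m_{2,2}//\Sigma_2$. So it suffices to know that $K_{\widehat{C}}-q_1-q_2$ is non-torsion for a general genus-$2$ curve with two general unmarked points, which is immediate: fixing $\widehat{C}$ and $q_1$, the map $q_2\mapsto K_{\widehat{C}}-q_1-q_2$ embeds $\widehat{C}$ into $\Jac(\widehat{C})$, so it meets the countable torsion subgroup in only countably many points, and each torsion locus pulls back to a proper closed subset of the parameter space. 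This pure dominance/moduli argument needs no monodromy or flatness input. I recommend you either carry out your $\nabla\sigma\ne 0$ computation in full or substitute this argument; with that done, the rest of your proposal is sound and coincides with the paper's proof (the paper additionally observes, in a remark, that $h^0(a\widehat{C}+E)=a+2$ already exhibits a non-finitely-generated ideal of $\Cox(X)$ directly, without invoking Hu--Keel --- but that shortcut, too, depends on the same non-torsion lemma).
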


It has been brought to our attention that this example has already appeared in the work of Artebani and Laface \cite{A-L}.
Our argument for the infinite generation of $\Cox(X)$ is identical to theirs. So we do not pretend any originality in these parts.
We have kept our argument for the readers' convenience. However, our proof for the rationality of $E_1(X)$ is new, to the best of our
knowledge. 

Thus, infinite generation of $\Cox(X)$ does not guarantee
irrationality of $E^1(X)$. Next, we try to find some sufficient conditions
for $E^1(X)$ to be irrational. Our first example is the blow-up $X$ of $\PP^2$ at $9$ or more points in very general position,
corresponding to Zariski's problem mentioned at the very beginning.
This is probably the ``simplest'' surface whose Cox ring is not finitely generated.
It has been suspected that its Euler-Chow series is not rational for some time.
But the irrationality of $\ECS{1}$ has not been established until very recently.
Shun-ichi Kimura notified us that there was a paper in
preparation where it was proved that $\ECS{1}$ is irrational \cite{KKT}.

They based their proof on the well-known fact that the cone $\NE^1(X)$ of pseudo-effective divisors on $X$ is not a rational polyhedral cone
(actually not even a polyhedral cone) for such $X$ (see \secref{SECPREM} for further discussion of their theorem and proof).

We generalize this result in two theorems
\ref{THM001} and \ref{THM002}. The proofs of these two theorems are
completely different from that of their theorem.
In fact, their result does not apply to one of our examples, as we will see.
Moreover, we go one step further to show that $E^1(X)$ is transcendental
in a natural sense.

\begin{thm}\label{THM001} 
For every  pair of integers $p>1$ and $r>2$, let $q_0(r,p)$ be the minimal positive integer $q$ satisfying
\begin{equation}%\label{EqME1}
q>r \text{ and }
\frac{1}{p}+\frac{1}{r}+\frac{1}{q-r}\leq 1.
\end{equation}
Then $E^1(X)$ is transcendental in the following cases: 
\begin{enumerate}
\item $X$ is the blow-up of $(\mathbb{P}^{r-1})^{p-1}$ at $\Lambda$, where $r>2$, $p>1$, $\Lambda$  is a finite set of points in $(\mathbb{P}^{r-1})^{p-1}$ and contains $q_0(r,p)$ points  in very general position.
		
\item $X$ is the blow-up of the product $\mathbb{P}^{r_1-1}\times\cdots\times\mathbb{P}^{r_{p-1}-1}$ at a finite set $\Lambda$, where $p>1$, $\Lambda$ lies on a  linear subspace $(\mathbb{P}^{r_0-1})^{p-1}$ with $2< r_0\leq \text{min}_{i=1}^{p-1}(r_i)$ and contains  $ q_0(r_0,p) $   points in very general position  as points of $(\mathbb{P}^{r_0-1})^{p-1}$. 
\end{enumerate}
\end{thm}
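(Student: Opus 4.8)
To prove Theorem~\ref{THM001} I would establish transcendence of $E^1(X)$ by exhibiting an obstruction to rationality that is \emph{not} the one used by Kimura--Kuroda--Takahashi. Their criterion forces $\NE^1(X)$ to be a rational polyhedron whenever $E^1(X)$ is rational, but on the varieties in question $\NE^1(X)$ may well be rational polyhedral (the authors explicitly warn that their result ``does not apply'' to one of these examples). So the strategy must instead extract \emph{finer} analytic information from the series than merely the shape of the support cone. The natural candidate is the growth rate of the coefficients $h^0(X,\CO(D))$ as $D$ moves to infinity along a well-chosen ray or slice of $M_X$. If $E^1(X)$ were rational, then for each fixed direction the restricted generating function would be a rational function of one (or finitely many) variables, which would force the coefficients $h^0(X,\CO(nD_0))$ to satisfy a linear recurrence with constant coefficients, hence to be eventually a quasi-polynomial in $n$. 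The plan is to produce a divisor class on which this fails.

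\textbf{Key steps.} First I would set up the geometry of $X$, the blow-up of a product of projective spaces at $q$ very general points, and identify inside $\Pic(X)$ a distinguished class $H$ (a pullback of a very ample class on the product, twisted by the exceptional divisors) whose sections are governed by forms vanishing to prescribed order at the very general points. Second, using very generality of the points together with an Alexander--Hirschowitz- or dimension-count type analysis, I would compute (or bound) $h^0(X,\CO(nH))$ as the dimension of a space of multihomogeneous polynomials with imposed multiplicity conditions at $q_0(r,p)$ points; the inequality (\ref{EqME1}) is precisely the numerical threshold ensuring that $q_0$ points impose \emph{independent} conditions in the relevant degrees and, crucially, that the expected and actual dimensions diverge from polynomial behaviour. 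Third, I would show that along the ray $\{nH\}$ (or a suitable two-parameter slice) the sequence $h^0(X,\CO(nH))$ grows in a way incompatible with any quasi-polynomial: the square-root or transcendental growth coming from the interplay between the degree of the forms and the number of independent multiplicity conditions produces coefficients whose generating function has a natural boundary or an essential singularity, hence is transcendental. For case (2), I would reduce to case (1) by restricting attention to the linear subspace $(\PP^{r_0-1})^{p-1}$ on which $\Lambda$ lies: sections of the relevant bundle on the big product restrict to sections on the subproduct, and the very general position of $\Lambda$ as points of $(\PP^{r_0-1})^{p-1}$ lets me transport the transcendence obstruction up to $X$.

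\textbf{The main obstacle.} The hard part will be the third step: converting a growth estimate on $h^0$ into a rigorous proof of transcendence of the multivariable series. Rationality in $\BZ[[M]]$ means $g\cdot f=h$ with $g,h\in\BZ[M]$, so I must show no such finitely-supported $g$ can clear denominators. The cleanest route is to restrict $f=E^1(X)$ to a single ray, where rationality descends to ordinary one-variable rationality and hence to a constant-coefficient linear recurrence on the $h^0$ values; then the obstacle becomes purely arithmetic, namely proving that the specific sequence $h^0(X,\CO(nH))$ satisfies \emph{no} such recurrence. Establishing this requires controlling $h^0(X,\CO(nH))$ sharply enough for all large $n$ — not merely asymptotically — which is exactly where very general position and the threshold (\ref{EqME1}) must do the decisive work. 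I expect the delicate point to be ruling out the possibility that the recurrence is masked on a sparse set of $n$, so I would likely argue with a density or gap argument on the indices where the non-quasi-polynomial behaviour is detectable.
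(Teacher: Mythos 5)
Your central mechanism fails at the step you yourself flag as the ``cleanest route''. Extracting from $f\in\BZ[[M]]$ the subseries supported on a ray is \emph{not} compatible with rationality unless that ray is an extreme ray (a face) of the support cone: the operation ``restrict to a ray'' is not a ring homomorphism, and already for $\BZ[[t_1,t_2]]$ the diagonal subseries of a rational series is in general only algebraic (and for more variables merely D-finite). Your chosen ray $\{nH\}$, with $H$ a pullback of an ample class twisted by exceptional divisors, lies in the \emph{interior} of the effective cone, exactly where the extraction argument breaks. Even granting the restriction, you would only be refuting a linear recurrence, i.e.\ rationality of a one-variable series; the theorem asserts \emph{transcendence}, and algebraic one-variable series (Catalan-type) have coefficients satisfying no constant-coefficient recurrence, so non-quasi-polynomiality proves nothing about algebraicity. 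Finally, the growth claims in your third step are untenable: $h^0(X,\CO(nH))=O(n^{\dim X})$ always, so no ``square-root growth'', essential singularity, or natural boundary can arise along a ray; the infinite generation of $\Cox(X)$ is a statement about infinitely many extremal classes and is essentially invisible along any single fixed ray. You have also misread the role of Inequality (\ref{EqME1}): it is not an Alexander--Hirschowitz-type threshold for independence of multiplicity conditions, but Mukai's condition making the associated (T-shaped root system) Cremona--Weyl group infinite; and in case (1) the cone $\NE^1(X)$ is in fact \emph{not} polyhedral --- only in case (2) is it polyhedral, which is where the Kimura--Kuroda--Takahashi criterion genuinely fails.

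The paper's actual argument is the correct version of what you were groping for. Under (\ref{EqME1}), the infinite Weyl-group orbit of the exceptional classes gives infinitely many extreme rays of $\NE^1(X)$ (Corollary \ref{ME1}, extracted from Mukai's proof). Proposition \ref{PROP006} then shows any series with nonvanishing coefficients on all of $M$ is transcendental: for each extreme ray one takes a supporting functional $\delta_\alpha$, and the \emph{initial part} of a series with respect to the grading by $\delta_\alpha$ \emph{is} multiplicative --- this is the legitimate substitute for your ray restriction, and it works precisely because the rays are extreme. A putative algebraic relation $F(t,f(t))=0$ yields, after taking $\delta_\alpha$-initial parts, relations $G_\alpha(t,f_\alpha(t))=0$ where the $G_\alpha$ range over a \emph{fixed finite} set but the $f_\alpha$ are infinitely many distinct elements of $R[[M]]$; a nonzero polynomial over an integral domain cannot have infinitely many roots, contradiction. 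Blowing up the extra points of $\Lambda$ is handled by the ``moreover'' clause of Corollary \ref{Transcendence 1} (or Lemma \ref{ME3}). Case (2) is reduced to case (1) not by a soft restriction-of-sections argument but by the exact identity $(1-t^{\widehat{P}})E^1(X)=E^1(\widehat{P})$ of Proposition \ref{ME5}, applied inductively through the chain of linear subspaces down to $(\PP^{r_0-1})^{p-1}$; multiplying by $(1-t^{\widehat{P}})$ changes the support to the submonoid $L_{\widehat{P}}$, whose cone has infinitely many extreme rays even though $\NE^1(X)$ itself is rational polyhedral, so Corollary \ref{PROP203} applies. Your case-(2) sketch is close in spirit to this, but without the $(1-t^F)$ identity it cannot produce a criterion that survives a polyhedral effective cone.
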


\begin{thm}
\label{THM002}
$E^1(X)$ is transcendental in the following cases:
\begin{enumerate}
\item $X$ is the blow-up of $\mathbb{P}^2$ at a finite set $\Lambda$,
where $\Lambda$ contains the intersection of two very general cubic curves.

\item $X$ is the blow-up of $\mathbb{P}^3$ at a finite set $\Lambda$,
where $\Lambda$ contains the intersection of three very general quadrics.

\item $X$ is the blow-up of  $\mathbb{P}^r$ at a finite set $\Lambda$,
where $\Lambda$ lies on a linear subspace $\mathbb{P}^2\subset \mathbb{P}^r$
and contains the intersection of two very general cubics.

\item $X$ is the blow-up of  $\mathbb{P}^r$ at a finite set $\Lambda$,
where $\Lambda$ lies on a linear subspace $\mathbb{P}^3\subset \mathbb{P}^r$
and contains the intersection of three very general quadrics.
\end{enumerate}
\end{thm}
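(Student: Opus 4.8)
The plan is to prove transcendence \emph{directly}, rather than through the non\-/polyhedrality of $\NE^1(X)$; indeed in the threefold cases (2) and (4) the effective cone of divisors may well be rational polyhedral, so the Kimura--Kuroda--Takahashi criterion does not apply. Instead I isolate a one\-/variable \emph{theta function} inside $E^1(X)$ and exploit the classical fact that lattice theta functions are transcendental. The geometric input is that each $X$ carries an elliptic fibration. For (1) the nine points $C_1\cap C_2$ are the base locus of the pencil $|{-}K|$, which after blow\-/up becomes $\pi\colon X\to\PP^1$ with fibre class $f$, $f^2=0$. For (2) the net of quadrics through the eight points $Q_1\cap Q_2\cap Q_3$ resolves to $\pi\colon X\to\PP^2$ whose general fibre is the elliptic curve cut by two quadrics; here a section of $\pi$ is a surface, hence a \emph{divisor} on the threefold $X$. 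In both situations the Mordell--Weil group is a lattice of positive rank, so there are infinitely many sections $S_v$ ($v$ in the Mordell--Weil lattice), each rigid with $h^0(X,\CO_X(S_v))=1$, whose classes obey Shioda's formula $[S_v]\cdot A=Q(v)$ for a (possibly shifted) positive\-/definite quadratic form $Q$, where $A$ is a fixed ample class.

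First I set up a bigraded specialisation, which is the step that strengthens Kimura--Kuroda--Takahashi from rationality to algebraicity. The monoid map $D\mapsto(D\cdot f,\,D\cdot A)$ has finite fibres (as $A$ is ample) and satisfies \eqref{E250}, so it induces a continuous ring homomorphism $\BZ[[M_X]]\to\BZ[[w,z]]$ sending $E^1(X)$ to $G(w,z)=\sum_D h^0(X,\CO_X(D))\,w^{D\cdot f}z^{D\cdot A}$; choosing, if necessary, a finite multidegree from a basis of $\Hom(\Pic(X),\BZ)$ we may take this map injective, so a defining relation $\sum a_i (E^1)^i=0$ is preserved and $G$ becomes algebraic over $\BQ(w,z)$. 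The key lemma is that the slice $G_1(z):=[w^1]G$ is then algebraic over $\BQ(z)$: writing $P(w,z,G)=0$ for the minimal polynomial of $G$, the derivative $H:=\partial_wG=-P_w(w,z,G)/P_Y(w,z,G)$ is again algebraic over $\BQ(w,z)$, and evaluating a defining polynomial of $H$ at $w=0$ (after dividing out the top power of $w$) yields a nontrivial polynomial over $\BQ[z]$ satisfied by $H(0,z)=G_1(z)$. Thus algebraicity of the full series is inherited by a single coefficient slice, a conclusion about coefficients and not merely supports.

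Next I compute the slice. An effective $D$ with $D\cdot f=1$ has horizontal part a single section and vertical part supported on fibres, so these classes are exactly $[S_v]+(\text{vertical})$. In the surface model pushing forward gives $\pi_*\CO_X(S_v)=\CO_{\PP^1}$ (forced by $\chi(X,\CO_X(S_v))=1$), whence $h^0(X,\CO_X(S_v+mf))=m+1$ and
\[
G_1(z)=\sum_{v}\sum_{m\ge 0}(m+1)\,z^{Q(v)+ma}=\frac{\Theta(z)}{(1-z^{a})^{2}},\qquad \Theta(z)=\sum_{v}z^{Q(v)},\ \ a=f\cdot A,
\]
with $\Theta$ the theta function of the Mordell--Weil lattice. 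Since $Q$ is positive definite of rank $\ge1$, $\Theta$ is a nonconstant modular form in $z=e^{2\pi i\tau}$ and has $|z|=1$ as a natural boundary, hence is transcendental over $\BQ(z)$; therefore so is $G_1(z)$, contradicting the key lemma. The threefold computation is the exact analogue, with the section divisors $S_v\isom B$ in place of the $(-1)$\-/curves and the rational factor replaced by a base\-/cohomology contribution.

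Finally the remaining cases reduce to these. Under the superset hypothesis the cubic pencil, resp.\ quadric net, still has the nine, resp.\ eight, points as base locus, so $\pi\colon X\to B$ persists with $f=3H-\sum_{p\in C_1\cap C_2}E_p$ (resp.\ its quadric analogue); the extra blow\-/ups insert only finitely many reducible fibres, perturbing $Q$ by bounded local height terms without destroying positive\-/definiteness, so $\Theta$ stays transcendental. For (3) and (4), where $\Lambda$ sits in a linear $\PP^2$ (resp.\ $\PP^3$), the same pencil (resp.\ net) lives in that subspace and pulls back to a fibration on $\Bl_\Lambda\PP^r$; the section divisors and their $h^0$'s match the subspace case, so the identical theta function reappears as $G_1$. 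The main obstacle is the precise evaluation of $G_1$: one must track the local corrections in Shioda's formula through the reducible fibres to guarantee that the exponents $Q(v)$ form a positive\-/definite form of the right rank (which rests on the Shioda--Tate computation keeping the Mordell--Weil rank $\ge1$), and in the threefold cases one must verify rigidity $h^0(X,\CO_X(S_v))=1$ of the section divisors, a vanishing for their normal bundles. Granting these, the transcendence of the lattice theta function delivers the theorem uniformly.
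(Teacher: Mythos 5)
There is a genuine gap, and it sits at the two load-bearing steps of your argument. First, the reduction of cases (3) and (4) fails as stated: on $X=\Bl_\Lambda\PP^r$ there is \emph{no} elliptic fibration, only on the proper transform $\widehat{P}$ of the linear subspace, and the curve class $f$ pairs \emph{negatively} with some effective divisors on $X$. Concretely, for $r=3$ and $\Lambda$ the nine base points in a plane $P$, one has $\CO_X(\widehat{P})|_{\widehat{P}}=h-\sum_{p\in\Lambda}e_p$ and $f=3h-\sum_{p\in\Lambda}e_p$, so $\widehat{P}\cdot f=3-9=-6$ (and more negative for larger $\Lambda$). Hence $D\mapsto(D\cdot f,\,D\cdot A)$ does not map $M_X$ into $\BN^2$, the $w$-exponents of your $G(w,z)$ are unbounded below (consider $m\widehat{P}$), and your key lemma --- which differentiates in $w$ and \emph{evaluates at $w=0$} --- does not parse for such Laurent-type series. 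The assertion that ``the identical theta function reappears as $G_1$'' is exactly the content that must be proved; the paper supplies it as the exact identity $(1-t^{\widehat{P}})E^1(X)=E^1(\widehat{P})$ of \propref{ME5}, proved by the explicit multigraded description of sections in Fact \ref{ME4} and applied inductively through the linear subspaces. Second, even in cases (1) and (2) your slice evaluation $G_1(z)=\Theta(z)/(1-z^a)^2$ presumes that the vertical effective monoid is $\BN f$, which holds only when $\Lambda$ is \emph{exactly} the base locus; the theorem's hypothesis is ``contains,'' and each extra point of $\Lambda$ inserts vertical exceptional components, so the slice is a sum over a higher-rank vertical cone in which $h^0(S_v+V)$ need not factor (fixed parts appear when one adds fiber components to a section), and the Shioda exponents acquire $v$-dependent local corrections, so $\Theta$ is no longer a single lattice theta. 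Together with the unproved rigidity $h^0(\CO_X(S_v))=1$ in the threefold case (your $\chi$-and-pushforward argument is specific to surfaces), these are precisely the points you flag with ``granting these,'' so the proof is incomplete at its decisive computation. A further, fixable, defect: the two-variable specialization has nontrivial kernel on $\BZ[M_X]$, so the image of a minimal polynomial of $E^1(X)$ may vanish identically; your ``add coordinates'' repair then forces the slice lemma and the theta identification to be redone in several variables.

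It is worth noting that your geometric engine is the same as the paper's: \propref{PROP202} uses exactly the Mordell--Weil translations $\phi_a$ to produce infinitely many sections $G_a=\phi_a(E_1)$ of the elliptic fibration. But the paper feeds this into a \emph{support-only} criterion: by \propref{PROP006} and \coref{PROP203} (with $F=\widehat{Q}$, respectively $F$ the relevant restriction divisor), it suffices that infinitely many classes of $L_F$ generate extreme rays of $\overline{\Conv(L_F)}$ --- the $(-1)$-curves $G_a\cdot\widehat{Q}$ --- with \emph{no} evaluation of $h^0$, no theta functions, and no analytic input beyond the finiteness of the set $\Pi$ of truncations of a putative minimal polynomial. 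This is why the paper's proof is insensitive to reducible fibers and extra points of $\Lambda$, exactly where your quantitative route breaks. Your coefficient-slice lemma (Taylor coefficients in $w$ of a series algebraic over $\BQ(w,z)$ are algebraic over $\BQ(z)$) is correct for honest power series and is a nice strengthening in spirit of Kimura--Kuroda--Takahashi, and your computation does go through for case (1) when $\Lambda$ is exactly the nine base points (all fibers irreducible, Mordell--Weil lattice $E_8$); to salvage the full theorem you would either have to carry out the vertical-cone and local-correction bookkeeping and prove a restriction identity in place of \propref{ME5}, or replace the theta identification by the extreme-ray criterion, which is what the paper does.
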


In all these cases, the Cox rings are known to be infinitely generated:
Theorem \ref{THM001} is essentially a revisit of Mukai's famous counterexamples to Hilbert's 14th problem \cite{Mu1}; in Theorem \ref{THM002}, (1) is again due to S. Mukai \cite{Mu1}, (2) is a variation of $(1)$ due to A. Prendergast-Smith \cite{P} and (3) and (4) are basically due to B. Hassett and Y. Tschinkel \cite[Example 1.8]{H-T}. Hassett-Tschinkel's example is of special interest to us. The blow-up $X$ of $\PP^3$ at finitely many points $\Lambda$ lying on a plane $P$ has rational polyhedral $\NE^1(X)$,
which is obviously generated by the proper transform of $P$ and the exceptional divisors since a hypersurface of degree $d$ in $\PP^N$ has multiplicity at most $d$ at a point. On the other hand, the cone $\NM^1(X)$ of nef divisors on $X$ is not polyhedral under the hypothesis of Theorem \ref{THM002}, since there are infinitely many $(-1)$-curves on the proper transform of $P$ and hence the dual cone $\NE_1(X)$ of $\NM^1(X)$ is not polyhedral (see \cite{H-T}).
Thus, this gives us a smooth projective variety $X$ with rational polyhedral $\NE^1(X)$, non polyhedral $\NM^1(X)$ and hence infinitely generated
Cox ring $\Cox(X)$ (see \secref{SECPREM} for detailed discussions of Cox rings and Mori dream spaces). The theorem of Kimura-Kuroda-Takahashi cannot be directly applied here.

Our proofs of these two theorems depend on some general algebraic and geometric criteria
for $E^1(X)$ to be transcendental, given in Proposition \ref{PROP006} and Corollaries \ref{Transcendence 1} and \ref{PROP203}. These criteria can be used to show the transcendence of series other than
$E^1(X)$, e.g., the generating functions of Gromov-Witten invariants.

The second purpose of this paper is to compute $\ECS{1}$ for Del Pezzo surfaces.
In particular we can solve the problem posed by Zariski for all multiplicities when the number of points is less than $9$. Although it is known that $\ECS{1}$ is rational for Del Pezzo surfaces, it is only computed for $X$ the blow-up of $\PP^2$ up to 3 points, as these are toric varieties and they were computed in \cite{E}. Here we will try to develop a recursive formula for $\ECS{1}$ when $X$ is the blow-up of $\PP^2$ at $r\le 8$ general points and carry
out the computation for $r\le 4$. This computation also involves quadratic transforms,
which feature prominently in our proof of Theorem \ref{THM001}. In general, the behavior of $E^1(X)$ of a smooth projective variety $X$ under the blow-ups of $X$ at points is not well understood. We hope to be able to understand this behavior better with the computations carried out here.

\subsection{Outline of the paper}

%%%% The following sections are  organized as follows: Theorems \ref{THM000}, \ref{THM001} and \ref{THM002}
%%%% will be proved in \S\ref{SECQUARTIC}, \S\ref{SECP2} and \S\ref{SECP3}, respectively, and
%%%% we are going to compute the Euler-Chow series of Del Pezzo surfaces in \S\ref{SECDPS}.

The paper is organized as follows:  
In \secref{SECPREM} we provide more background materials on Euler-Chow series, Cox rings and Mori dream spaces.

In \secref{SECQUARTIC} we prove Theorem \ref{THM000} and state some
open questions on the loci of $(S,p)$ where $\Cox(X)$ is finitely
generated and/or $E_1(X)$ is rational for $X$ the blow-up of a quartic
$K3$ surface $S$ at a point $p\in S$.

In \secref{SECTECS} we start with some criteria for a series to be transcendental.
In particular, we prove Corollary \ref{Transcendence 1} 
and \ref{PROP203} which give geometric criteria for the transcendence of Euler-Chow series. Then Theorems \ref{THM001} and \ref{THM002} are proved using these transcendence criteria.

Finally in \S\ref{SECDPS} we compute the Euler-Chow series
of Del Pezzo surfaces. 

\subsection*{Conventions}

We work exclusively over $\BC$. Throughout the paper, if $X$ is a
variety of dimension $n$, then $E_{n-1}(X)$ is also denoted by $E^1(X)$.

\subsection*{Acknowledgments}

All authors would like to thank the referee for his/her hard work and many very helpful corrections and suggestions.

\section{Preliminaries}\label{SECPREM}

\subsection{Euler-Chow series}\label{1.1}

%It has been of great interest to study the topological invariants on moduli spaces and mathematical physics in the last two or three decades. 

We are interested in the class of invariants for
projective varieties arising from the Euler characteristics of their Chow
varieties. 
In the case of the blow up of $\Pp^2$ at a finite number of points,
the problem posed by Zariski merges with the topological invariants,
more precisely, with computing the Euler characteristics of Chow
varieties of this variety.

We start by introducing the Euler-Chow series in general and then we
see what form it takes in the particular case that we are interested
here. We can take any of the equivalence relations we have for cycles; here we
take homological equivalence. The reader can look at other cases in \cite{EK2}.
Among other things, we are interested in the case where $X$ is
the blow up of $\Pp^2$ at a finite number of points in general
position.

% % % In particular it is worth mentioning that the series can be defined in
% % % different categories with resulting series generalizing well known series.
% % % For instance, in the category of pure motives, modulo some relations,
% % % our series is called the Motivic Chow series and generalizes the
% % % motivic zeta series of Yves Andr\'e. It is also interesting to see that
% % % in the category of algebraic varieties over a field $k$, our series
% % % takes a form that generalizes the Motivic Zeta series of Kapranov 
% % % \cite{EK1}, and this also generalizes the
% % % Motivic Zeta series of Andr\'e Weil.
% % % We shall see that the Euler-Chow series (in codimension $1$) is the
% % % Hilbert series for $X$ if $\Pic(X) = \BZ$.
% % % 
% % % A series that can be defined in different categories and generalizes
% % % important series, like the ones mentioned, is a series that can be
% % % very interesting to define and study in the most general
% % % setting where it can be defined.
% % % One can guess that much information can be obtained when this is done. 

For these cases it is also worth saying that there is a relation between
the series and the Cox ring, as will be shown later in this
section. 

% Let us give a brief outline of the paper:
% In \ref{1.1} we define the Euler-Chow series and recall a few
% standard examples. 

% In \ref{1.2} it is discussed the relation between the rationality of
% the Euler-Chow series and the finite generation of the Cox ring. We
% give some examples and counter-examples. We also state  conjecture \ref{CONJ001}
% where it is stated the equivalence of these two situations for a
% particular kind of varieties. We also state the theorem  that the Euler-Chow series
% is irrational, in fact, not algebraic (in a sense to be defined) for
% the blow up of $\Pp^2$ at $r\geq 9$ points in general position, this
% is the first example in the literature where the Euler-Chow series is irrational, other
% examples are also mentioned. 

%In \ref{1.3}, we will discuss the computation of Euler-Chow series, in codimension one, of Del Pezzo surfaces.

% The rest of the sections are organized as follow: Theorems \ref{THM000}, \ref{THM001} and \ref{THM002}
% will be proved in \S\ref{SECQUARTIC}, \S\ref{SECP2} and \S\ref{SECP3}, respectively.
% In \S\ref{SECMUKAI}, we will go one step further to revisit the Mukai's counterexamples to Hilbert's 14th problem
% and show the irrationality of the corresponding Eulr-Chow series in Mukai's examples.
% Finally, we are going to compute the Euler-Chow series of Del Pezzo surfaces in
% \S\ref{SECDPS}. The Euler-Chow series for $r\geq 4$ are first computed
% in this paper, while the cases
% $1\leq r \leq 3$ were computed in a different way using toric
% varieties. 

\begin{defn}
\label{DEF1}
Given a projective variety $X$ over $\BC$, let $\lambda$ be an element in its
$2p$-homology group $H_{2p} (X, \BZ)$. Consider the monoid $M$ in
$H_{2p}(X, \BZ)$ given by algebraic classes of effective cycles. We
consider $M$ as a multiplicative monoid by $t^a t^b = t^{a+b}$ for
$a, b \in M$, where $t$ is a formal variable and $a\to t^a$ turns addition
in $M$ to multiplication.

The $p$-dimensional Euler-Chow Series of $X$ is defined as (cf. \cite{E})
$$
\ECS{p} \,= \, \sum_{\lambda \in M} \, \chi \big(\CV{p}{\lambda}\big)
\cdot t^{\lambda} \,\,\, \in \, \, \, \BZ [[M]]
$$
where $\CV{p}{\lambda}$ is the Chow variety parametrizing effective algebraic $p$-cycles homologous
to $\lambda$, $\chi \big(\CV{p}{\lambda}\big)$ denotes its Euler characteristic,
and $\BZ[[M]]$ is the ring of functions from $M$ to $\BZ$ with the convolution product.
For $f\in\BZ[[M]]$ with $f(\lambda) = a_\lambda$, we write
$f=\sum_{\lambda \in M} a_\lambda \cdot t^{\lambda}$. Then    
$$
\BZ[[M]] = \left\{ \sum_{\lambda \in M}   a_\lambda \cdot t^{\lambda} \, |
\, a_\lambda \in \BZ \right\},
$$
where the product on $\BZ[[M]]$ is the convolution: if
$f=\sum_{\lambda\in M} a_{\lambda} \cdot t^{\lambda}$ and
$g=\sum_{\gamma\in M} b_{\gamma} \cdot t^{\gamma}$,
then $f\cdot g= \sum_{\delta} \left( \sum_{\lambda +
\gamma = \delta} a_{\lambda} \cdot b_\gamma\right) \cdot  t^{\delta}$,
which is well-defined since the product operation
$\times :M\times M \rightarrow M$ has finite fibres due to the projectivity of $X$.
We denote by $\BZ[M]$
the ring contained in $\BZ[[M]]$ given by the elements with only a
finite number of $a_\lambda$ not zero.
Equivalently, $\BZ[M]$ is the monoid ring associated to $M$. The rationality
and algebraicity of $f\in \BZ[[M]]$ are defined in the following way.

We say that $f\in\BZ[[M]]$ is
{\em rational} if there are two elements $g,h$ in $\BZ[M]$,
not both zero, such that $g\cdot
f = h$. Similarly, we say that $f\in \BZ[[M]]$ is {\em algebraic} if there exist
$a_0, a_1, ..., a_d\in \BZ[M]$, not all zero, such that
$$
a_0 + a_1 f + ... + a_d f^d = 0.
$$
If $f$ is not rational or algebraic, we call $f$ {\em irrational}
or {\em transcendental}.
\end{defn}

Alternatively, we can define $\BZ[[M]]$ to be the inverse limit
of $\BZ[M]/I_d$ for a descending chain of ideals $I_d$ of $\BZ[M]$.
Due to the projectivity of $X$, we have a monoid homomorphism
$\deg: M\to \BN$
defined by $\deg \lambda = \lambda . A^p$ for a fixed ample divisor $A$ on $X$.
The standard argument using Hilbert schemes or Chow varieties shows that
\begin{equation}\label{E250}
\big| \{ \lambda\in M: \deg \lambda \le d \} \big| < \infty
\end{equation}
for all $d\in \BR$.
Therefore, 
$\BZ[M]/I_d$ is a finitely generated module over $\BZ$ for
$$
I_d = \left\{ \sum a_\lambda t^\lambda \in \BZ[M]: \deg \lambda \ge d
\text{ if } a_\lambda\ne 0 \right\}.
$$
Here $I_d$ satisfy $\cap I_d = \{0\}$ and $\BZ[[M]]$ can be alternatively defined by
$$
\BZ[[M]] = \varprojlim_{d} \BZ[M] / I_d.
$$
Note that this definition of $\BZ[[M]]$ is independent of the choice of the polarization $A$
of $X$ because, given two ample divisors $A$ and $B$ on $X$ with
$\deg_A \lambda = \lambda . A^p$ and $\deg_B \lambda = \lambda . B^p$, 
there exist positive constants $c_1$ and $c_2$ such that
$c_1 \deg_A (\lambda) \le \deg_B (\lambda) \le c_2 \deg_A(\lambda)$ for 
all $\lambda\in M$. Also note that the projectivity of $X$ is essential
in the definition of $\BZ[[M]]$.

The Euler-Chow series $E_\bullet(X)$ has been computed for some varieties $X$.
For example, when $X$
is a toric variety or the blow-up of $\PP^2$ at points lying on a line,
it has turned out to be rational. Once we have a rational function $E_\bullet(X)$ as the generating function, we can compute, with a little algebra, its
coefficients that are the Euler characteristic of Chow
varieties of $X$.

Let us start with a simple example, the case of zero-dimensional cycles. As
before, let $X$ be a projective variety. Since we are considering
elements $\lambda$ in the zeroth homology group, we have that
$\lambda$ must be equal to a nonnegative integer, and it is well
known that $\CV{0}{d}$ is isomorphic to the $d$-fold symmetric product
$SP^d (X)$. In this case, the $0$-dimensional Euler-Chow Series is
$$
\ECS{0} \, = \, \sum_{d=0}^{\infty} \, \chi\big( SP^d (X) \big) \cdot t^d
$$
and a result of Macdonald \cite{Mac} shows that $E_0 (X)$ is given by
rational function $E_0 (X) \, =\, (1/(1-t))^{\chi(X)}$.

Another familiar instance arises in the case of divisors.
Let $X$ be a smooth projective variety of dimension $n$ satisfying $H^1(\CO_X) = 0$.
Then the Picard group $\Pic(X) = H^1(\CO_X^\times)$ of $X$ is a subgroup of $H^2(X, \BZ)$ and hence finitely generated.
Let ${\Div}_+ (X)$ be the space of effective divisors on $X$
and let 
\begin{equation}\label{E000}
M = M_X = {\Div}_+(X) / \sim
\end{equation}
be the monoid of effective divisors modulo linear
equivalence. Observe that
\begin{enumerate}
\item[A.-]
Given $L\in \Pic(X)$, then $\dim H^0(X, L) \not= 0$ if and
only if $L =\CO(D)$ for some effective divisor $D$.
\item[B.-]
Under the given hypothesis, homological and linear equivalence
coincide, and two effective divisors $D$ and $D^\prime$ are
homologically equivalent if and only if they are in the same linear
system. Therefore, $\CV{n-1}{\lambda} = \PP H^0(X, \CO(D))$ and hence
$\chi(\CV{n-1}{\lambda}) = h^0(X, \CO(D))$ with $[D] = \lambda$.
Namely, the Euler characteristics of Chow
varieties of divisors are the dimensions of complete linear systems.
\end{enumerate}

Therefore, the $(n-1)$-dimensional Euler-Chow series
$E_{n-1}(X) = E^1(X)$ is exactly defined by \eqref{EC-1}.

\subsection{Rationality of Euler-Chow series}\label{1.2}

From now on, we focus on $E^1(X)$ exclusively and
$M = M_X$ always refers to the monoid of effective divisors on $X$.
In general, $E^1(X)$ is very hard to compute.
It is only computed for some very special varieties $X$
(cf. \cite{ELF}). In all the known cases including abelian varieties,
toric varieties and the blow-ups of $\PP^2$ at points lying on a line, 
$E^1(X)$ turns out to be a rational function. 
Rationality of Euler-Chow series has been studied in
\cite{EK1}, \cite{ES} and \cite{EK2}. The case of abelian varieties
was worked out in \cite{EH}, for toric varieties in \cite{E}
and for the blow-ups of $\PP^2$ at points lying on a line in \cite{EK2}.
As pointed out in \secref{SECINTRO}, the rationality of $E^1(X)$
is closely related to the finite generation of the Cox ring $\Cox(X)$.
However, Theorem \ref{THM000} shows that these two notions are not equivalent.

To prove Theorem \ref{THM000}, we need the result
of Y. Hu and S. Keel that characterizes a variety with finitely generated Cox ring
geometrically \cite[Proposition 2.9]{H-K}:

\begin{thmm}[Hu-Keel]
Let $X$ be a smooth projective variety whose Picard group is a
finitely generated free abelian group. Then
$\Cox(X)$ is finitely generated if and only if $X$ is a Mori dream space (MDS).
\end{thmm}

In order to explain what a MDS is, we need to introduce a few basic
concepts in birational geometry:

\begin{defn}\label{DEFN002}
Let $\NE_k(X)\subset H^{2n-2k}(X, \BR)$
be the cone of pseudo-effective algebraic cycles of dimension $k$ on $X$.
That is, it is the smallest closed real cone in $H^{2n-2k}(X, \BR)$ containing all the
effective algebraic cycles of dimension $k$. For convenience, we write
$\NE^k(X) = \NE_{n-k}(X)$. So $\NE^1(X)$ is the smallest closed real cone
containing all the effective divisors in $H^2(X, \BR)$. Namely,
it is the closure $\overline{\Conv(M_X)}$ of the convex hull of $M_X$ in $H^2(X,\BR)$.
It is usually called the {\em cone of (pseudo-)effective divisors} or
{\em effective cone of divisors} on $X$.

The nef cone $\NM^k(X) = \NE_k(X)^\vee$ is the dual cone of $\NE_k(X)$ in
the subspace of $H^{2k}(X, \BR)$ spanned by the algebraic cycles of codimension $k$.
In particular, $\NM^1(X)\subset H^2(X, \BR)$
is the smallest closed real cone containing all the numerically effective
(nef) divisors and it is a subcone of $\NE^1(X)$ by Kleiman's criterion.

A divisor $D$ is {\em semi-ample}
if the complete linear series $|mD|$ is base point free
for some $m \in \BZ^+$.
 
Let $\LL$ be a linear system on a smooth projective variety.
For a general member $D\in \LL$, we
can write
\begin{equation}\label{E076}
D = D_f + D_\mu
\end{equation}
as a sum of two effective divisors, where $D_f$ is the {\em fixed part} of $\LL$
satisfying $D_f\subset D'$ for every $D'\in \LL$ and $D_\mu$ is the {\em moving part} of $\LL$
satisfying $\dim(D_\mu \cap D') < \dim X - 1$ for $D'\in \LL$ general. 
The fixed part $D_f$ and the moving part $D_\mu = D - D_f$ of a divisor $D$ are those of the complete linear system $|D|$.
And we call a divisor $D$
{\em movable} if $D_f = 0$.
The smallest closed real cone in $H^2(X, \BR)$ containing all the movable divisors is called
{\em the moving cone} of $X$ and denoted by $\Mov(X)$.
\end{defn}

\begin{defn}\label{DEFN003}
A normal $\BQ$-factorial projective variety $X$ is a Mori Dream Space (MDS) if
\begin{enumerate}
\item[MD1.]
Every nef divisor on $X$ is semi-ample and
the nef cone $\NM^1(X)$ is generated by finitely many semi-ample divisors.
\item[MD2.]
There exists a finite collection of birational maps $f_i: X_i\dashrightarrow X$
such that $f_i$ is an isomorphism in codimension one, $X_i$ is $\BQ$-factorial,
$\NM^1(X_i)$ is generated by finitely many semi-ample divisors and the moving cone $\Mov(X) = \cup (f_i)_* \NM^1(X_i)$.
\end{enumerate}
\end{defn}

Note that we do not explicitly assume that $\Pic_\BQ(X) = N^1(X)$, where $N^1(X)$ is the Neron-Severi group of $X$. But it is implied by the hypothesis that every nef divisor on $X$ and $X_i$ is semi-ample.

All toric and Fano varieties are MDS. So their Euler-Chow series are
rational. In the case of toric varieties, explicit computation was
made in \cite{E}. Later in this paper, we will compute $E^1(X)$ for $X$ the blow-up of $\PP^2$ at $r\le 8$ general points, which are Del Pezzo surfaces and special cases
of Fano varieties.

For $X$ to be a MDS, we see that its nef cone $\NM^1(X)$, a priori, has to be
rational polyhedral. Another necessary condition for $X$ to be a MDS is that
$\NE^1(X)$ is also rational polyhedral. This is clear if we apply Hu-Keel's theorem since
$\NE^1(X)$ is obviously rational polyhedral if $\Cox(X)$ is finitely generated.
We can also see this directly from MD1 and MD2:

\begin{prop}\label{PROPMOVC}
For a normal $\BQ$-factorial projective variety $X$ satisfying MD1 and MD2,
$\NE^1(X)$ is rational polyhedral.
\end{prop}

\begin{proof}
Fixing a semi-ample divisor $F$,
we claim that there are only finitely many integral (i.e. reduced and irreducible or prime) divisors $D$ satisfying
\begin{equation}\label{EMOVC001}
m F - D \in \text{NE}^1(X) \text{ for } m >> 1
\text{ and }
D + t F \not\in \Mov(X) \text{ for all } t
\end{equation}
on a normal $\BQ$-factorial projective variety $X$.

It is enough to prove this for $X$ smooth since a pair $(D, F)$ satisfies \eqref{EMOVC001} on $X$ only if $(\widehat{D}, f^* F)$ has
the same property on $\widehat{X}$ for a desingularization
$f: \widehat{X} \to X$ of $X$ with $\widehat{D}$ the proper transform of $D$ under $f$.

Let $\phi: X \to Y\subset \PP H^0(mF)^\vee$ be the map given by $|mF|$ such
that $\phi$ is surjective and $\phi^* G = mF$ for some $m\in \BZ^+$ and $G = \CO_Y(1)$. Using Stein factorization,
we may assume that $Y$ is normal and $\phi$ has connected fibers.
Suppose that $l m F - D\in \NE^1(X)$, i.e., $l \phi^* G - D$ is pseudo-effective
for $l >> 1$. Then we must have $D \cap X_y = \emptyset$ for a general fiber $X_y = \phi^{-1}(y)$ of
$\phi$. It follows
that $E = \phi(D)$ is a proper subvariety of $Y$.
There are only finitely many integral divisors $D$ such that $E = \phi(D)$ has codimension $\ge 2$ in $Y$ or $E = \phi(D')$ for some integral divisor $D'\ne D$.
Let us assume that $E$ is an integral divisor on $Y$
and $E \ne \phi(D')$ for all integral divisors $D'\ne D$.
Namely, there exists a closed subvariety $Z\subset Y$ of
codimension $\codim_Y Z \ge 2$ such that
\begin{equation}\label{EMOVC002}
D \cap \phi^{-1}(U) = \phi^{-1}(E\cap U)
\end{equation}
for $U= Y\backslash Z$. Obviously, we may choose $Z$ such that $U$ is smooth.
Now we are going to show that $D + tF \in \Mov(X)$ for $t >> 1$.

Let $b = \dim Y$. If $b = 0$, $F = 0$ and this is impossible.
If $b = 1$,
$D$ is supported on a fiber of $\phi$ and $D = \phi^* L$ for some $\BQ$-divisor $L$
on $Y$; hence $\mu(D + N \phi^* G)$ is movable for some $\mu\in \BZ^+$ and $N >> 1$.
So we assume that $b\ge 2$.

Let $f: \widehat{X} \to X$ and $g: \widehat{Y} \to Y$ be proper
birational morphisms with the commutative diagram
\[
\xymatrix{
\widehat{X} \ar[r]^{\widehat{\phi}} \ar[d]_{f} & \widehat{Y} \ar[d]^g\\
X \ar[r]^\phi & Y
}
\]
where $\widehat{X}$ and $\widehat{Y}$ are smooth
and $f$ and $g$ are isomorphisms over $\phi^{-1}(U)$ and $U$,
respectively.
Let $\widehat{E}\subset \widehat{Y}$ be the proper transform of $E$ under $g$.
On $\widehat{Y}$, we have the inequality
\[
\begin{split}
h^0(\widehat{E}, \widehat{E} + N g^* G) &\ge h^0(\widehat{Y}, \widehat{E} + N g^* G) - h^0(\widehat{Y}, N g^* G)\\
&\ge h^0(\widehat{E}, \widehat{E} + N g^* G) - h^1(\widehat{Y}, N g^* G).
\end{split}
\]
Using Leray spectral sequence, we have $h^1(\widehat{Y}, N g^* G) = O(N^{b-2})$. Thus,
\[
\begin{split}
h^0(\widehat{Y}, \widehat{E} + N g^* G) &= h^0(\widehat{Y}, N g^* G) + 
\frac{E.G^{b-1}}{(b-1)!} N^{b-1} +
O(N^{b-2})\\
&= h^0(Y, NG) + \frac{E.G^{b-1}}{(b-1)!} N^{b-1} +
O(N^{b-2})\\
&= h^0(X, NmF) + \frac{E.G^{b-1}}{(b-1)!} N^{b-1} +
O(N^{b-2}).
\end{split}
\]
Let $\widehat{D} \subset \widehat{X}$ be the proper transform of $D$ under $f$. Our
hypotheses on $D$ imply that 
\[
\widehat{\phi}^* \widehat{E} = \mu \widehat{D} + J
\]
for some $\mu\in \BZ^+$ and some effective divisor $J$ satisfying $\dim(g\circ \widehat{\phi}(J))
\le b - 2$, since $(g\circ\widehat{\phi})^{-1} (E\cap U) = \widehat{D}
\cap (g\circ \widehat{\phi})^{-1}(U)$ by \eqref{EMOVC002}. Therefore,
we have the estimate
\[
\begin{split}
h^0(\widehat{X}, \mu \widehat{D} + N f^*(mF))
&= h^0(\widehat{X}, \mu \widehat{D} + J + N f^*(mF)) + O(N^{b-2})\\
&= h^0(\widehat{Y}, \widehat{E} + N g^* G) + O(N^{b-2})\\
&= h^0(X, N m F) + \frac{E.G^{b-1}}{(b-1)!} N^{b-1} +
O(N^{b-2}).
\end{split}
\]
Similarly, working with the pullback $f^*(\mu D)$, we obtain
\[
\begin{split}
h^0(X, \mu D + NmF) &= h^0(\widehat{X}, \mu \widehat{D} + N f^*(mF)) + O(N^{b-2})
\\
&= h^0(X, N m F) + \frac{E.G^{b-1}}{(b-1)!} N^{b-1} +
O(N^{b-2}).
\end{split}
\]
This implies that $D + t F\in \Mov(X)$ for $t >> 1$.
Therefore, there are only finitely many
$D$ satisfying \eqref{EMOVC001}. Note that this holds for arbitrary $X$ and the hypotheses
MD1 and MD2 do not come into play.

Now let us assume that $X$ satisfies MD1 and MD2.
For $F\in \Mov(X)$, we use the notation $\Pi_F$ to denote the set
of integral divisors $D$ satisfying \eqref{EMOVC001}. Let us make the following two observations:
\begin{itemize}
\item
For all $F_1, F_2, ..., F_n\in \Mov(X)$ and $c_1, c_2, ..., c_n > 0$,
\[
\Pi_{c_1 F_1 + c_2 F_2 + ... + c_n F_n} = \Pi_{F_1 + F_2 + ... + F_n}
\]
and hence
\begin{equation}\label{EMOVC003}
\bigcup_{c_1, c_2,...,c_n\ge 0} \Pi_{c_1 F_1 + c_2 F_2 + ... + c_n F_n}
= \bigcup_{c_1, c_2,...,c_n\in \{0,1\}} \Pi_{c_1 F_1 + c_2 F_2 + ... + c_n F_n}.
\end{equation}
\item
Since $f_i: X_i\dashrightarrow X$ is an isomorphism in codimension one,
\[
{\Pic}_\BQ(X)\isom {\Pic}_\BQ(X_i), \Mov(X)\isom \Mov(X_i)
\text{ and }
{\NE}^1(X)\isom {\NE}^1(X_i).
\]
A pair $(D, F)$ of an integral divisor $D$ and a movable divisor
$F$ satisfies \eqref{EMOVC001} on $X$ if and only if $(f_i^* D, f_i^* F)$ satisfies
\eqref{EMOVC001} on $X_i$. Therefore,
\begin{equation}\label{EMOVC004}
\Pi_{f_i^* F} = f_i^* \Pi_{F} \text{ for all } F\in \Mov(X).
\end{equation}
\end{itemize}
Combining \eqref{EMOVC003}, \eqref{EMOVC004} and the fact that
$\Pi_F$ is finite for $F$ semi-ample, we conclude that the set
\[
\begin{split}
\Pi &= \bigcup_{F\in \Mov(X)} \Pi_F\\
&= \big\{ D \text{ integral divisor}: mF - D \in \text{NE}^1(X)
\text{ and } D + tF \not\in \Mov(X)
\\
&\quad\quad 
\text{ for some } F \in \Mov(X), m >> 1 \text{ and all } t\big\}
\end{split}
\]
is finite. Let $\Sigma$ be the set of integral divisors $D$ on $X$
satisfying that $D\not\in \Pi$ and $D\not\in \Mov(X)$.

For every finite subset $S\subset \Sigma$, we claim that
\begin{equation}\label{GSRINGEMOVC002}
C_S^\circ \cap \Mov(X) = \emptyset
\end{equation}
where $C_S$ is the cone generated by $S\subset H^2(X,\BR)$
and 
$$
C_S^\circ = \left\{ \sum_{D\in S} a_D D: a_D > 0\right\}
$$
is the interior of $C_S$. For every $F\in C_S^\circ\cap \Mov(X)$,
since $m F - D\in \NE^1(X)$ for all $D\in S$ and $m >> 1$, 
there exists $\varepsilon > 0$ such that $F + \varepsilon D\in \Mov(X)$ for all $D\in S$.
Thus, $C_S^\circ \cap \Mov(X)$ contains the (open) cone
$$
W_F = \left\{ 
\sum_{D\in S} (b_D F + \varepsilon b_D D): b_D > 0 
\right\}$$
generated by $F + \varepsilon D$ and $W_F$ is an open subset of $C_S^\circ$
containing $F$.
So $C_S^\circ \cap \Mov(X)$ is both open and closed
in $C_S^\circ$ and \eqref{GSRINGEMOVC002} follows.
This implies that
\begin{equation}\label{GSRINGEMOVC000}
C_\Sigma\cap \Mov(X) = \{0\}.
\end{equation}
We observe that every nonzero divisor $F\in C_S\cap C_T$ for two
disjoint subsets $S$ and $T$ of $\Sigma$ is movable. Therefore,
it follows from \eqref{GSRINGEMOVC000} that
\begin{equation}\label{GSRINGEMOVC001}
C_S \cap C_T = \{0\} \text{ for all } S, T\subset \Sigma
\text{ and } S\cap T = \emptyset.
\end{equation}

It is a well-known fact in convex geometry that every set $\Sigma$ of
$\ge n+2$ points in $\BR^n$ can be divided into two disjoint subsets
$S$ and $T$ such that the convex hulls of $S$ and $T$ have non-empty intersection: For
$n+2$ points $p_1, p_2, ..., p_{n+2}\in \BR^n$, we can find $a_1, a_2, ..., a_{n+2}\in \BR$,
not all zero, such that $\sum a_k = 0$ and $\sum a_k p_k = 0$; then we simply let
$S = \{ p_k: a_k \ge 0\}$ and $T = \{ p_k: a_k < 0\}$.

Consequently, $|\Sigma| \le h^2(X) + 1 < \infty$ and hence
\[
\text{NE}^1(X) = \Mov(X) + C_\Pi + C_\Sigma
\]
is rational polyhedral, since $\Mov(X)$ is rational polyhedral by MD1 and MD2.
\end{proof}

For a smooth projective surface $X$, $\NM^1(X)$ and $\NE^1(X)$ are dual to each other and 
every movable divisor on $X$ is nef. Therefore, MD1 is sufficient for surfaces to have
finitely generated Cox rings. That is, when $\dim X = 2$,
$\Cox(X)$ is finitely generated if and only if
its nef cone is rational polyhedral and every nef divisor on $X$ is semi-ample. Our
counterexample to \questionref{Q001} is exactly a smooth projective surface $X$
with rational polyhedral cones $\NE^1(X)$ and $\NM^1(X)$ and a nef divisor that is not
semi-ample.

Despite Theorem \ref{THM000}, we still expect that \questionref{Q001} holds true for
a certain class of varieties. We tentatively make the following conjecture:

\begin{conj}\label{CONJ001}
Let $X$ be a smooth rationally connected projective variety.
Then $E^1(X)$ is rational if and only if $\Cox(X)$ is finitely generated. 
\end{conj}

Note that $\Pic(X)$ is automatically finitely generated and free
if $X$ is a smooth rationally connected projective variety.
Otherwise, there is a torsion line bundle on $X$ giving rise to
a nontrivial \'etale morphism $f: Y \to X$. Obviously,
$Y$ is also rationally connected and hence $\chi(\CO_Y) = \chi(\CO_X) = 1$.
But $\chi(\CO_Y) = (\deg f) \chi(\CO_X)$, which is a contradiction.
Of course, this shows that a smooth rationally
connected projective variety is simply connected, which is a well-known fact.

So far we do not have much evidence supporting the conjecture. But in the examples
we have where $X$ is rational and $\Cox(X)$ is known to be infinitely generated,
as in Theorems \ref{THM001} and \ref{THM002},
we can always prove that $E^1(X)$ is irrational. And these examples are interesting in their own rights.

The simplest such example is the blow-up $X$ of $\PP^2$ at $9$ very general points. The irrationality of $E^1(X)$ is a consequence of
Theorem \ref{THM001}. As mentioned in \secref{SECINTRO}, this was also independently proved by Kimura-Kuroda-Takahashi.
Actually, they proved the following algebraic result \cite[Theorem 1.1]{KKT}:

\begin{thmm}[Kimura-Kuroda-Takahashi]
The cone associated to a series
$f(t) = \sum a_\lambda t^\lambda\in \BZ[[t_1, t_2, ..., t_m]]$,
i.e., the smallest closed real cone in $\BR^m$ containing $\{\lambda: a_\lambda \ne 0\}$,
is a rational polyhedron if $f(t)$ is rational.
Consequently, 
$\NE^1(X)$ is a rational polyhedron if $E^1(X)$ is rational for a smooth projective
variety $X$ with $\Pic(X)\isom \BZ^m$.
In particular, $E_1(X)$ is irrational for the blow-up $X$ of $\PP^2$ at $9$ or more
very general points.
\end{thmm}

Thus, for the case where $X$ is the blow-up of $\PP^2$ at $\Lambda$, we can say that $\ECS{1}$ is very hard, if not impossible, to compute if $\Lambda$ consists of $r\ge 9$ points in very general position. On the other hand, it should be pointed out that $\Bl_\Lambda\PP^2$ can still be a MDS if the points in $\Lambda$ are not in general position. For example, if $\Lambda$ consists of points lying on a line, $X = \Bl_\Lambda\PP^2$ is a MDS and $\ECS{1}$ has been computed by E. Javier Elizondo and  Shun-ichi Kimura in \cite{EK2} using its motivic version, the motivic Chow series.

\section{Blow-ups of Quartic $K3$}
\label{SECQUARTIC}

\subsection{Proof of Theorem \ref{THM000}}

Let $L$ be the hyperplane divisor on the smooth quartic surface $S\subset\mathbb{P}^3$  and $C\in |L|$ be the curve cut out by the tangent
plane of $S$ at $p$. Then $C$ is a quartic plane curve with exactly one node for $p\in S$
general. Let $\widehat{C}\subset X$ be the proper transform of $C$ under the blow-up $\pi: X\to S$.
Obviously, $\widehat{C} = \widehat{L} - 2E$, where $\widehat{L} = \pi^* L$
and $E\subset X$ is the exceptional divisor of $\pi$.

Since $\widehat{C}$ is irreducible and $\widehat{C}^2 = 0\ge 0$, $\widehat{C}$ is nef.
Indeed, it is
easy to see that $\NM^1(X)$ is generated by $\widehat{C}$ and $\widehat{L}$ and
$\NE^1(X)$ is generated by $\widehat{C}$ and $E$. We claim that $\widehat{C}$ is not
semi-ample. That is, $h^0(\CO_X(n \widehat{C})) = 1$
for all $n\in \BZ^+$. From the exact sequence
\begin{equation}\label{E200}
0 \xrightarrow{} H^0(\CO_X((n-1) \widehat{C}))
\xrightarrow{} H^0(\CO_X(n \widehat{C}))
\xrightarrow{} H^0(\CO_{\widehat{C}} (n \widehat{C})),
\end{equation}
we see that $h^0(\CO_X(n \widehat{C})) = 1$ as long as
\begin{equation}\label{E203}
H^0(\CO_{\widehat{C}} (n \widehat{C})) = 0
\end{equation}
for all $n\in \BZ^+$. Note that 
\begin{equation}\label{E201}
(E + \widehat{C})|_ {\widehat{C} }= (K_X + \widehat{C})|_{ \widehat{C}} = K_{\widehat{C}}
\end{equation}
in $\Pic(\widehat{C})$ by adjunction, where $K_X$ and $K_{\widehat{C}}$ are
the canonical divisors of $X$ and $\widehat{C}$, respectively. Therefore,
\begin{equation}\label{E202}
\widehat{C}|_{ \widehat{C}} = K_{\widehat{C}} - E|_{ \widehat{C}} = K_{\widehat{C}} - q_1 - q_2
\end{equation}
in $\Pic(\widehat{C})$,
where $q_1$ and $q_2$ are the two points on $\widehat{C}$ over $p$. Therefore,
\eqref{E203} holds as long as $K_{\widehat{C}} - q_1 - q_2$ is non-torsion
in $\Pic(\widehat{C})$. 

\begin{lem}\label{LEM200}
For a very general quartic $K3$ surface $S$ and a very general point $p\in S$,
$K_{\widehat{C}} - q_1 - q_2$ is non-torsion.
\end{lem}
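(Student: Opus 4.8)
The plan is to reduce the statement to a single, explicitly reconstructed pair $(S,p)$ and then spread it to the general member by a countability argument. First I would record that $\widehat{C}$ is a smooth curve of genus $2$: one has $\widehat{C}^2 = 0$, and since $S$ is K3 we have $K_X = E$, so $K_X\cdot\widehat{C} = E\cdot\widehat{C} = 2$ and adjunction gives $p_a(\widehat{C}) = 2$. Thus $L := K_{\widehat{C}} - q_1 - q_2$ is a degree-$0$ class on the genus-$2$, hence hyperelliptic, curve $\widehat{C}$. Writing $\bar q_1$ for the hyperelliptic conjugate of $q_1$, so that $q_1 + \bar q_1 \sim K_{\widehat{C}}$, we obtain $L \sim \bar q_1 - q_2$, a difference of two points.

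For the reduction, let $B$ be the irreducible family of pairs $(S,p)$ with $S$ a smooth quartic and $p \in S$ whose tangent-plane section has a single node; over $B$ the proper transforms $\widehat{C}$ form a smooth family of genus-$2$ curves and $L$ becomes a section of the relative Jacobian $\mathcal{J} \to B$. For each integer $N \ge 1$ the locus $Z_N = \{\, b \in B : N\cdot L(b) = 0\,\}$ is closed, being the preimage of the zero section under the section $N\cdot L$. Consequently, if I can exhibit a single $(S_0,p_0) \in B$ for which $L$ is non-torsion, then every $Z_N$ is a proper closed subset, $\bigcup_{N} Z_N$ is a countable union of proper closed subsets of the irreducible variety $B$, and the very general member of $B$ — in particular a general $S$ together with a general $p$ — lies outside it. So it suffices to produce one pair with $L$ non-torsion.

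I would produce such a pair by running the construction backwards. Start from an arbitrary smooth genus-$2$ curve $\widehat{C}$ and two points $q_1, q_2$ chosen so that $\bar q_1 - q_2$ is non-torsion in $\Jac(\widehat{C})$; this is possible since the torsion classes are countable while $q_2$ ranges over a curve. The complete system $|K_{\widehat{C}} + q_1 + q_2|$ is a $g^2_4$ that is not composed with the hyperelliptic involution (because $q_2 \ne \bar q_1$), hence maps $\widehat{C}$ birationally onto a plane quartic $C \subset \PP^2$; since $|K_{\widehat{C}} + q_1 + q_2|$ fails to separate $q_1$ and $q_2$, a genus count shows that $C$ has exactly one node, whose two branches are the images of $q_1$ and $q_2$. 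Embedding $\PP^2 \subset \PP^3$ as $\{x_3 = 0\}$ and setting $S = \{F = 0\}$ with $F = f_C(x_0,x_1,x_2) + x_3\, G$ for a general cubic $G$, a Bertini argument shows that $S$ is a smooth quartic surface, hence a K3 surface, with $S \cap \PP^2 = C$. The decisive local check is at the node $p$: there $\nabla F(p) = (0,0,0, G(p))$, which is nonzero for general $G$, so $S$ is smooth at $p$ and $T_p S = \PP^2$. Hence $(S_0,p_0) = (S,p) \in B$, and $\Bl_p S$ recovers $\widehat{C}$, $q_1$, $q_2$, and therefore $L = \bar q_1 - q_2$, which is non-torsion by construction.

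The main obstacle is the reconstruction step: checking that the generic datum $(\widehat{C}, q_1, q_2)$ is actually realized — that $|K_{\widehat{C}} + q_1 + q_2|$ yields a quartic with a single node and no worse singularities, and that a general quartic through $C$ is smooth, meets the plane exactly in $C$, and is tangent to it only at the node. Packaged invariantly, this amounts to showing that the classifying map $B \to \mathcal{J}_2$ to the universal Jacobian over $\mathcal{M}_2$ is dominant; since the fibrewise difference map $\widehat{C}\times\widehat{C} \to \Jac(\widehat{C})$ is surjective for genus $2$, dominance reduces to the two genericity statements above, which are routine dimension and Bertini computations once the local analysis at $p$ is in place.
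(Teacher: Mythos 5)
Your proof is correct, and its skeleton is the paper's: both reduce the statement to the abstract datum $(\widehat{C},q_1,q_2)$ of a genus-$2$ curve with two unordered points, i.e., to a point of $\m_{2,2}//\Sigma_2$. The difference lies in how the reduction is justified. The paper runs the construction forwards: it asserts that $W\dashrightarrow V_{4,2}\to \m_{2,2}//\Sigma_2$ is a composition of dominant maps ($W$ the quartics tangent to a fixed plane, $V_{4,2}$ the Severi variety of one-nodal plane quartics) and then notes that $K_{\widehat C}-q_1-q_2$ is non-torsion at a general point of $\m_{2,2}$; the dominance itself is left as a standard fact. You run it backwards: you choose one datum with $\bar q_1-q_2$ non-torsion, realize it as a one-nodal plane quartic via the $g^2_4$ given by $|K_{\widehat C}+q_1+q_2|$ (your observations that the system is not composed with the hyperelliptic involution since $q_2\ne\bar q_1$, and that the $\delta$-invariant count $p_a-g=1$ forces exactly the one node glued from $q_1,q_2$, are precisely the content of the dominance of $V_{4,2}\to\m_{2,2}//\Sigma_2$), and realize the quartic as a tangent-plane section via $F=f_C+x_3G$ with the gradient check $\nabla F(p)=(0,0,0,G(p))$ (the content of the dominance of $W\dashrightarrow V_{4,2}$). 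You then spread from this single member using closedness of the torsion loci $Z_N$ and irreducibility of $B$, where the paper instead invokes genericity on the target moduli; the two mechanisms are logically equivalent, since non-torsion is the complement of countably many closed conditions, and both in truth prove the statement for very general $(S,p)$, which is the intended meaning of ``general'' here. What your version buys is self-containedness and an explicit witness --- the hyperelliptic reformulation $K_{\widehat C}-q_1-q_2\sim \bar q_1-q_2$ makes the non-torsion choice transparent --- at the cost of length; the paper's version is shorter because it delegates the realization to known facts about Severi varieties. Two small points deserve explicit mention in your write-up: the class $L$ gives a well-defined section of the relative Jacobian even though monodromy may swap $q_1$ and $q_2$ (the class is symmetric in the two points), and irreducibility of $B$ follows from fibering it over the flag variety of pairs $(p,\Lambda)$ with $p\in\Lambda$, the fibers being open subsets of linear subsystems of $|\CO_{\PP^3}(4)|$.
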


\begin{proof}
We fix a plane $\Lambda\subset\PP^3$ and consider $W\subset |\CO_{\PP^3}(4)|$ consisting of
all quartic surfaces $S$ tangent to $\Lambda$. Obviously, we have a dominant rational
map $W \dashrightarrow V_{4,2}$ sending $S$ to $S\cap \Lambda$, where $V_{d,g}$ is the
Severi variety parametrizing nodal plane curves of degree $d$ and genus $g$.
And $V_{4,2}$ in turn
maps to the moduli space of genus $2$ curves with two unordered points via the map sending $C$ to $(\widehat{C}, q_1, q_2)$, where $\widehat{C}$ is the normalization
of $C$ and $q_1$ and $q_2$ are the two points on $\widehat{C}$ over the node $p\in C$.
It is easy to see that this map is dominant: For every smooth curve $\widehat{C}$ of
genus $2$ and two points $q_1$ and $q_2$ on $\widehat{C}$, we can map
$\widehat{C}$ to $\PP^2$ using $|K_{\widehat{C}} + q_1 + q_2|$; we can choose
three linearly independent sections $s_1, s_2$ and $s_3$ of $H^0(K_{\widehat{C}} + q_1 + q_2)$
with $s_1(q_i) = s_2(q_i) = 0$ for $i=1,2$; then the map $q\to (s_1(q), s_2(q), s_3(q))$
sends $q_1$ and $q_2$ to the same point $p = (0,0,1)$ and maps $\widehat{C}$ to
$C\in V_{4,2}$ with a node at $p$ if $q_1\ne q_2$.

In summary, we have dominant maps
\begin{equation}\label{E204}
W\dashrightarrow V_{4,2} \xrightarrow{} \m_{2,2} // \Sigma_2
\end{equation}
where $\m_{g,n}$ is the moduli space of genus $g$ curves with $n$ marked points and 
its quotient
by the symmetric group $\Sigma_n$ on the $n$ marked points is the moduli space of genus $g$
curves with $n$ unordered points.
Obviously, $K_{\widehat{C}} - q_1 - q_2$ is non-torsion for a very general point
$(\widehat{C}, q_1, q_2)$ of $\m_{2,2}$.
\end{proof}

Therefore, $\widehat{C}$ is nef and not semi-ample and $X$ is not a MDS. It follows that
$\Cox(X)$ is not finitely generated by the theorem of Hu-Keel. However, its Euler-Chow series
$E_1(X)$ can be explicitly computed as follows and it turns out to be rational.

We write
\begin{equation}\label{E205}
\begin{split}
E_1(X) &= \sum_{a,b\ge 0} h^0(a \widehat{C} + b E) t_1^a t_2^b
\\
&= \left(\sum_{a\ge b=0} + \sum_{b>a=0} + \sum_{b\ge 2a > 0} + \sum_{2a > b > 0} \right)
h^0(a \widehat{C} + b E) t_1^a t_2^b
\end{split}
\end{equation}
where $t_1 = t^{\widehat{C}}$ and $t_2 = t^E$.

We have proved that $h^0(a\widehat{C}) = 1$ for $a \ge 0$. Hence
\begin{equation}\label{E206}
h^0(a\widehat{C} + bE) = 1
\end{equation}
when $a = 0$ or $b = 0$. And it is trivial that
\begin{equation}\label{E207}
h^0(a\widehat{C} + bE) = h^0(a\widehat{L}) = 2a^2 + 2
\end{equation}
when $b\ge 2a > 0$.

When $2a > b > 0$, we have
\begin{equation}\label{E208}
\begin{split}
&\quad
h^0(a\widehat{C} + bE) - h^1(a\widehat{C} + bE) + h^2(a\widehat{C} + bE)\\
&= 
2ab - a - \frac{b(b-1)}2 + 2
\end{split}
\end{equation}
by Riemann-Roch. We have the vanishing
\begin{equation}\label{E209}
h^2(a\widehat{C} + bE) = h^0(- a\widehat{C} - (b-1) E) = 0
\end{equation}
since $(- a\widehat{C} - (b-1) E) \widehat{L} < 0$ as long as $a > 0$.
Also 
$$
h^1(a\widehat{C} + bE) = h^1(K_X + (a\widehat{C} + bE - K_X))
= h^1(K_X + (a\widehat{C} + (b-1)E)) = 0
$$
for $2a > b > 1$ since $a\widehat{C} + (b-1)E$
is ample in this case. When $2a > b = 1$, we have
\begin{equation}\label{E210}
\begin{split}
H^1(\CO_X((a-1)\widehat{C} + E))
&\xrightarrow{} H^1(\CO_X(a \widehat{C} + E))
\xrightarrow{} H^1(\CO_{\widehat{C}} (a \widehat{C} + E))\\
&\xrightarrow{} H^2(\CO_X((a-1)\widehat{C} + E)) \xrightarrow{} 0,
\end{split}
\end{equation}
where $h^1(\CO_{\widehat{C}} (a \widehat{C} + E)) 
= h^0(\CO_{\widehat{C}}((1-a) \widehat{C}))$. Note that
$h^0(\CO_{\widehat{C}}((1-a) \widehat{C})) = 0$ for $a \ne 1$ by Lemma \ref{LEM200}.
When $a = 1$, \eqref{E210} becomes
\begin{equation}\label{E211}
\xymatrix{0 \ar[r] & H^1(\CO_X(\widehat{C} + E)) \ar[r] &
H^1(K_{\widehat{C}}) \ar[r]\ar@{=}[d] &
H^2(\CO_X(E)) \ar[r]\ar@{=}[d] & 0\\
& & \BC & \BC}
\end{equation}
and hence $H^1(\widehat{C} + E) = 0$. Then $H^1(a \widehat{C} + E) = 0$ for all $a > 0$ by
induction using \eqref{E210}.
In conclusion, $h^1(a\widehat{C} + bE) = h^2(a\widehat{C} + bE) = 0$ and hence
\begin{equation}\label{E212}
h^0(a\widehat{C} + bE) = 2ab - a - \frac{b(b-1)}2 + 2
\end{equation}
when $2a > b > 0$.

\begin{rem}\label{REM200}
Even without Hu-Keel's theorem, we can directly see that $\Cox(X)$ is not finitely generated
by this computation. Setting $b =1$ in \eqref{E212}, we obtain
\begin{equation}\label{E224}
h^0(a\widehat{C} + E) = a + 2
\end{equation}
for all $a \ge 1$. It follows that the map
\begin{equation}\label{E225}
H^0(\widehat{C})\tensor H^0((a-1)\widehat{C} + E) \xrightarrow{} H^0(a\widehat{C} + E) 
\end{equation}
is not surjective and hence there exists an irreducible curve $D_a\in |a\widehat{C} + E|$
for each $a\ge 1$. The ideal generated by $\{ D_a: a\in \BZ^+\}\subset \Cox(X)$
is obviously not finitely generated since $D_a$ does not lie in the image of
$$
\sum_{k=1}^a H^0(k \widehat{C})\tensor H^0((a-k)\widehat{C} + E) \xrightarrow{} H^0(a\widehat{C} + E).
$$
\end{rem}

Combining \eqref{E205}, \eqref{E206}, \eqref{E207} and \eqref{E212}, we can compute $E_1(X)$.
Although the computation is not hard, we are not going to carry it out
as it is not very inspiring. All we need for Theorem \ref{THM000} is to show that $E_1(X)$ is a
rational function. For this purpose, we simply write
\begin{equation}\label{E213}
E_1(X) = \sum_i \sum_{(a,b)\in N_i\cap \BZ^2} P_i(a,b) t_1^a t_2^b
\end{equation}
where $N_i$ are a finite collection of
closed rational polyhedral cones in $\BR^2$, 
$N_i\cap \BZ^2$ are the lattice points contained in $N_i$ and
$P_i(a,b)$ are polynomials in $a$ and $b$. Here we allow $N_i$ to be degenerated, i.e., to
be contained in a linear subspace.
For example, the last term of \eqref{E205} can be written as
\begin{equation}\label{E214}
\sum_{2a > b > 0} = \sum_{2a\ge b\ge 0} - \sum_{2a = b \ge 0} - \sum_{2a \ge b = 0}
+ \sum_{a=b=0}
\end{equation}
Therefore, the rationality of $E_1(X)$ follows if we can show

\begin{prop}\label{PROP200}
For a closed rational polyhedral cone $N$ in $\BR^n$ and a polynomial
$P(x)\in \BZ[x_1, x_2,..., x_n]$, the series
\begin{equation}\label{E215}
\sum_{D\in N\cap \BZ^n} P(D) t^D \in \BZ[[M]]
\end{equation}
is rational, where $t^D = t_1^{d_1} t_2^{d_2} ... t_n^{d_n}$ for
$D = (d_1, d_2, ..., d_n)$ and $M$ is a submonoid of $\BZ^n$ containing
$N\cap \BZ^n$ and equipped with a monoid homomorphism $\deg: M\to \BN$
satisfying \eqref{E250}.
\end{prop}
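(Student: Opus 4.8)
The plan is to separate the role of the polynomial weight $P$ from the combinatorics of the cone $N$, handling the weight by differential operators and the cone by a lattice-point decomposition.

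First I would reduce to the case $P\equiv 1$. Introduce the operators $\theta_j = t_j\,\partial/\partial t_j$ acting on $\BZ[[M]]$; since $\theta_j t^D = d_j\, t^D$ for $D=(d_1,\dots,d_n)$, each $\theta_j$ acts diagonally, and $\theta_1^{a_1}\cdots\theta_n^{a_n}\,t^D = \big(\prod_j d_j^{a_j}\big) t^D$. Hence for any $P\in\BZ[t_1,\dots,t_n]$,
$$
\sum_{D\in N\cap\BZ^n} P(D)\, t^D \;=\; P(\theta_1,\dots,\theta_n)\Big(\sum_{D\in N\cap\BZ^n} t^D\Big).
$$
It then remains to check that each $\theta_j$ preserves rationality in the sense of \defnref{DEF1}. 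This is where the Leibniz rule does the work: $\theta_j$ is a derivation of $\BZ[M]$ that maps $\BZ[M]$ into itself (as $\theta_j t^D = d_j t^D$). So if $f$ is rational, say $g\,f=h$ with $g,h\in\BZ[M]$ and $g\neq 0$, then applying $\theta_j$ and rearranging gives $g(\theta_j f) = \theta_j h - (\theta_j g) f$; multiplying once more by $g$ yields $g^2(\theta_j f) = g\,\theta_j h - (\theta_j g)\,h \in \BZ[M]$, so $\theta_j f$ is again rational. Iterating, $P(\theta)f$ is rational whenever $f$ is, and the reduction is complete.

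Second, I would prove the weight-free statement that $\sum_{D\in N\cap\BZ^n} t^D$ is rational. Note first that $N$ is pointed: since $N\cap\BZ^n\subset M$ and $M$ satisfies \eqref{E250}, $M$ has no nonzero invertible element, while a rational polyhedral cone containing a line would contain $\pm v$ for some lattice vector $v\neq 0$; hence $N$ contains no line and the series lies in $\BZ[[M]]$. The standard step is then to triangulate $N$ into finitely many \emph{half-open} simplicial cones $\sigma$ that partition $N\cap\BZ^n$ disjointly, so that the series splits as a finite sum over the $\sigma$. For a simplicial cone $\sigma=\mathrm{cone}(v_1,\dots,v_k)$ with linearly independent $v_i\in N\cap\BZ^n$, every lattice point of $\sigma$ is written uniquely as $r+\sum_i m_i v_i$ with $m_i\in\BZ_{\ge 0}$ and $r$ a lattice point in the half-open fundamental parallelepiped $\Pi=\{\sum_i\lambda_i v_i:0\le\lambda_i<1\}$. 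Since $\Pi$ is bounded, $\Pi\cap\BZ^n$ is finite, and
$$
\sum_{D\in\sigma\cap\BZ^n} t^D \;=\; \Big(\sum_{r\in\Pi\cap\BZ^n} t^r\Big)\prod_{i=1}^{k}\frac{1}{1-t^{v_i}},
$$
which is visibly rational (each $t^{v_i}\in\BZ[M]$ and the denominator has unit constant term, hence is invertible in the completion $\BZ[[M]]=\widehat{\BZ[M]}$). Summing the finitely many such contributions gives rationality of the unweighted series, and combined with the first step this proves the proposition.

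The main obstacle is the lattice-point geometry in the middle of the second step: producing an honest disjoint half-open triangulation of $N$ (so the series genuinely add with no overcounting on shared faces) and verifying the unique fundamental-parallelepiped decomposition for each simplicial piece. These are classical facts about rational polyhedral cones; one can alternatively reach the same generating function by inclusion--exclusion over an ordinary closed triangulation at the cost of tracking signs, but I would favor the half-open version precisely to keep the identity an equality of elements of $\BZ[[M]]$ rather than of analytic functions. Everything else---the diagonal action of the $\theta_j$, the derivation identity, and the finiteness of $\Pi\cap\BZ^n$---is routine once the decomposition is in place.
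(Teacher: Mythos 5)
Your proposal is correct and takes essentially the same route as the paper's proof: reduce the polynomial weight by a differential operator (the paper uses $Q=\sum_i f_i(t)\,\partial^{D_i}/\partial t^{D_i}$ where you use the Euler operators $\theta_j=t_j\,\partial/\partial t_j$), then decompose $N$ into rational simplicial cones and sum over each piece via the half-open fundamental parallelepiped, which is exactly the content of Lemma \ref{LEM202} and formula \eqref{E230}. The only differences are bookkeeping: the paper subdivides into closed simplicial cones meeting along faces (citing Stanley) with the resulting inclusion--exclusion left implicit, whereas you use a disjoint half-open triangulation, and you additionally spell out---via the identity $g^2(\theta_j f)=g\,\theta_j h-(\theta_j g)\,h$---that the operators preserve rationality in the sense of Definition \ref{DEF1}, a point the paper passes over silently.
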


The way we prove Proposition \ref{PROP200} also gives an algorithm to compute the series \eqref{E215},
which we will need later for the computation of Euler-Chow series of Del Pezzo surfaces.

First, for each $P(x)\in \BZ[x_1, x_2, ..., x_n]$, there exists a differential operator
\begin{equation}\label{E216}
Q = \sum_{i=1}^m f_i(t) \frac{\partial^{D_i}}{\partial t^{D_i}}
\end{equation}
such that
\begin{equation}\label{E217}
\sum_{D\in N\cap \BZ^n} P(D) t^D = Q\left(\sum_{D\in N\cap \BZ^n} t^D\right)
\end{equation}
where $f_i(t)\in \BZ[t_1, t_2, ..., t_n]$, $D_i\in \BN^n$ and
\begin{equation}\label{E218}
\frac{\partial^D}{\partial t^D} =
\frac{\partial^{d_1 + d_2 + ... + d_n}}{\partial t_1^{d_1} \partial t_2^{d_2}
... \partial t_n^{d_n}}
\end{equation}
for $D = (d_1, d_2, ..., d_n)$. Therefore, to show the rationality of \eqref{E215},
it suffices to show that of
\begin{equation}\label{E219}
\sum_{D\in N\cap \BZ^n} t^D.
\end{equation}

\begin{lem}\label{LEM202}
Suppose that $N\subset \BR^n$ is a closed rational simplicial cone of dimension $m\le n$,
i.e., it is generated by $m$ linearly independent rational vectors
${v}_1, {v}_2,
..., {v}_m \in \BQ^n$. Then \eqref{E219} is a rational function in $\BZ[[M]]$ with
$M$ a submonoid of $\BZ^n$ containing $N\cap \BZ^n$ and satisfying \eqref{E250}.
\end{lem}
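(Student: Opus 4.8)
The plan is to reduce $N\cap\BZ^n$ to a finite union of translates of a single free sub-semigroup and then sum the resulting geometric series, this being the standard fundamental-parallelepiped argument for lattice points in a simplicial cone. First I would clear denominators: replacing each $v_i$ by a suitable positive integer multiple $w_i=c_i v_i\in\BZ^n$ changes neither the cone $N$ nor its linear span, so I may assume the generators $w_1,\dots,w_m\in\BZ^n$ are linearly independent integer vectors, each lying in $N\cap\BZ^n\subset M$. If $m=0$ the cone is $\{0\}$ and the statement is trivial, so I assume $m\ge 1$, whence every $w_i\neq 0$.

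The key geometric input is the half-open fundamental parallelepiped
\[
\Pi=\Big\{\sum_{i=1}^m \lambda_i w_i : 0\le \lambda_i<1\Big\}.
\]
Since $\Pi$ is bounded, $\Pi\cap\BZ^n$ is a finite set, contained in $N\cap\BZ^n\subset M$. I would then establish the bijective decomposition: every $D\in N\cap\BZ^n$ is uniquely of the form $D=u+\sum_{i=1}^m a_i w_i$ with $u\in\Pi\cap\BZ^n$ and $a_i\in\BN$. Indeed, writing $D=\sum\lambda_i w_i$ with $\lambda_i\ge 0$ (unique, by linear independence of the $w_i$), one sets $a_i=\lfloor\lambda_i\rfloor$ and $u=D-\sum a_i w_i=\sum(\lambda_i-\lfloor\lambda_i\rfloor)w_i$; then $u\in\Pi$, and $u$ is a lattice point because $D$ and the $w_i$ are. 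Uniqueness follows from the uniqueness of the coordinates $\lambda_i$, and conversely any $u+\sum a_i w_i$ of this shape lies in $N\cap\BZ^n$.

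Summing $t^D$ over this disjoint decomposition would give
\[
\sum_{D\in N\cap\BZ^n} t^D=\Big(\sum_{u\in\Pi\cap\BZ^n} t^u\Big)\cdot\prod_{i=1}^m\frac{1}{1-t^{w_i}},
\qquad \frac{1}{1-t^{w_i}}=\sum_{a\ge 0} t^{a w_i}.
\]
Here $\sum_{u}t^u$ is a genuine polynomial in $\BZ[M]$ and $\prod_{i=1}^m(1-t^{w_i})\in\BZ[M]$; multiplying the identity through by the latter gives $\big(\prod_i(1-t^{w_i})\big)\sum_{D}t^D=\sum_u t^u$ in $\BZ[[M]]$, which is exactly the rationality required in Definition~\ref{DEF1}.

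The one point requiring care — and the main (if modest) obstacle — is to justify that these geometric series and their product are legitimate elements of $\BZ[[M]]$ and that expanding the product reproduces the left-hand side coefficient by coefficient. For this I would invoke hypothesis \eqref{E250}: it guarantees that $\BZ[[M]]=\widehat{\BZ[M]}$ is the $I$-adic completion, and since each $w_i\neq 0$ the element $t^{w_i}$ lies in the ideal $I$, so $1-t^{w_i}$ is invertible in the completion with inverse $\sum_{a\ge 0}t^{a w_i}$. The finite-fibre property of the monoid product, again a consequence of \eqref{E250}, ensures that the convolution defining the product of the $m+1$ factors is well defined and that each coefficient is computed by a finite sum matching the decomposition above. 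This completes the argument.
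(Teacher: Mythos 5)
Your proof is correct and takes essentially the same route as the paper's: both clear denominators to get integral generators, decompose $N\cap\BZ^n$ uniquely via the half-open fundamental parallelepiped (the paper's $\Sigma_N$, your $\Pi$) into lattice translates of the free semigroup on the generators, and arrive at the identity $\sum_{D\in N\cap\BZ^n} t^D = \bigl(\sum_{w} t^w\bigr)\prod_{i}\frac{1}{1-t^{v_i}}$. Your added verification that the geometric series and their convolution product are legitimate in $\BZ[[M]]$ via \eqref{E250} is a detail the paper leaves implicit, not a departure in method.
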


\begin{proof}
After replacing ${v}_i$ by
$\lambda{v}_i\in \BZ^n$ for some $\lambda\in \BZ^+$,
we may assume ${v}_i\in \BZ^n$. Let
\begin{equation}\label{E231}
\Sigma_N = \BZ^n \cap \left\{ \sum_{i=1}^m a_i {v}_i: 0\le a_i < 1\right\}.
\end{equation}
Clearly, $\Sigma_N$ is a finite set and every ${v}\in N\cap \BZ^n$
can be uniquely written as
\begin{equation}\label{E232}
{v} = {w} +
\sum_{i=1}^m \lambda_i {v}_i
\end{equation} 
for some ${w}\in \Sigma_N$ and $\lambda_1, \lambda_2, ...,
\lambda_m\in \BN$. Thus
\begin{equation}\label{E230}
\sum_{D\in N\cap \BZ^n} t^D = \left(\sum_{{w}\in \Sigma_N}
t^{{w}}\right)
\prod_{i=1}^m \frac{1}{1 - t^{{v}_i}}
\end{equation}
is a rational function.
\end{proof}

To show that \eqref{E219} is rational for an arbitrary rational polyhedral cone $N$, it suffices
to subdivide $N$ into a finite union of simplicial cones which meet along faces \cite{S}. This proves
Proposition \ref{PROP200} and hence $E_1(X)$ is rational for a very general pair $(S, p)$.

\subsection{Some further comments}

If the pair $(S, p)$ fails to be very general, the corresponding $\Cox(X)$ might
still be finitely generated. It is interesting to study how $\Cox(X)$ and $E_1(X)$ vary
as $(S, p)$ does. To set this up, let us consider
\begin{equation}\label{E221}
\begin{split}
B &= \{ (S,p): \text{$S$ is a smooth quartic surface with $\Pic(S) = \BZ$},\\
&\quad \text{$p\in S$ is a point such
that the tangent plane of $S$ at $p$ cuts out}\\
&\quad\text{on $S$ a curve $C\in |L|$ with a single node}\}
\subset
|\CO_{\PP^3}(4)| \times \PP^3
\end{split}
\end{equation}
and the universal family $\CS = \{ (S, p, q): q\in S \} \subset B\times \PP^3$ over $B$.
Note that both $B$ and $\CS$ are complements of unions of countably many closed subvarieties in some projective varieties.

Clearly, $\CS/B$ has a section $P$ given by the map $B\to \CS$ sending $(S,p)$ to $(S,p,p)$. Let
$\X$ be the blow-up of $\CS$ along $P$. Obviously, at each point $b = (S,p)\in B$, the fiber
$\X_b$ of $\X/B$ at $b$ is exactly the blow-up $\Bl_p S$.

\begin{question}\label{Q002}
What is the set
$\Delta_M = \{ b\in B: \X_b \text{ is a MDS}\}$ in $B$?
Is it Zariski closed in $B$?
\end{question}

It is tempting to think that $\Delta_M$ consists of $(S,p)$ with the property
$K_{\widehat{C}} - q_1 - q_2\in \Pic(\widehat{C})_\tors$. This, however, is unlikely to
be true by a naive dimension count: the subvariety
\begin{equation}\label{E220}
\{(S,p)\in B: K_{\widehat{C}} - q_1 - q_2 \text{ is an $n$-torsion}\}
\end{equation}
has codimension $2$ in $B$ while the subvariety
\begin{equation}\label{E222}
\Delta_{M,n} = \{(S,p)\in B: h^0(a\widehat{C}) = 1 \text{ for } 0\le a < n,
h^0(n\widehat{C}) > 1\}
\end{equation}
has negative expected dimension for $n$ sufficiently large. 

Since $\widehat{C}$ is semi-ample if and only if $n\widehat{C}$ is movable for some
$n > 0$,
we see that $\Delta_M$ is the union of $\Delta_{M,n}$ and hence, a priori,
is a countable union of subvarieties of $B$.

Likewise, we want to know how $E_1(\X_b)$ varies:

\begin{question}\label{Q003}
Is $E_1(\X_b)$ a rational function for all $b\in B$?
\end{question}

\section{Transcendental Euler-Chow Series}
\label{SECTECS}

\subsection{Transcendence criteria}

We will obtain our first transcendence criterion based upon
the following algebraic result. 

To make the statement as general as possible, we work with $R[M]$ and $R[[M]]$ instead of
$\BZ[M]$ and $\BZ[[M]]$ for an arbitrary integral domain $R$. The rationality and algebraicity
of $f(t)\in R[[M]]$ are defined in an obvious way.

\begin{prop}\label{PROP006}
Let $M$ be a submonoid of $\BZ^m$ 
with a monoid homomorphism $\deg: M\to \BN$
satisfying \eqref{E250},
$J$ be a subset of $M$ and $R$ be an integral domain.
Suppose that there is a collection
$
\{\delta_\alpha\in \text{Hom}_\BZ(\BZ^m, \BR): \alpha\in A\}
$
of $\BZ$-linear functions $\delta_\alpha: \BZ^m\to \BR$ satisfying
\begin{itemize}
\item the minimum
\begin{equation}\label{E010}
\varepsilon_\alpha = \min_{D\in J} \delta_\alpha(D) \ge 0
\end{equation}
exists for every $\alpha\in A$;
\item $\{J_\alpha: \alpha\in A\}$ is an infinite set, where
\begin{equation}\label{E055}
J_\alpha = \{ D\in J: \delta_\alpha(D) = \varepsilon_\alpha\}
\end{equation}
for $\alpha\in A$.
\end{itemize}
Then
\begin{equation}\label{E061}
f(t) = \sum_{D\in J} a_D t^D\in R[[M]]
\end{equation}
is transcendental as long as $a_D \ne 0$ for every $D\in J$.
\end{prop}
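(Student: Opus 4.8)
The plan is to argue by contradiction, assuming $f(t)$ is algebraic, and to extract from the algebraic relation a strong finiteness constraint that contradicts the hypothesis that $\{J_\alpha : \alpha \in A\}$ is infinite. The key device is to use each $\delta_\alpha$ as a ``weight'' or linear functional that lets me look at the lowest-order part of $f$ in the direction $\delta_\alpha$. Since $f$ is algebraic, suppose there exist $b_0, b_1, \dots, b_d \in R[M]$, not all zero and with $b_d \ne 0$, such that $b_0 + b_1 f + \cdots + b_d f^d = 0$. I want to specialize this relation using the functional $\delta_\alpha$ and compare minimal weights on both sides.

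First I would fix $\alpha$ and extend $\delta_\alpha$ to a functional on the group ring, assigning to each monomial $t^D$ the real number $\delta_\alpha(D)$, and to a general element its minimum weight over the monomials appearing. The hypothesis that $\varepsilon_\alpha = \min_{D \in J}\delta_\alpha(D)$ exists guarantees that $f$ has a well-defined ``leading form'' $\ell_\alpha(f) = \sum_{D \in J_\alpha} a_D t^D$ in the $\delta_\alpha$-direction, namely the sum of its lowest-weight terms, and since all $a_D \ne 0$ this leading form is supported exactly on $J_\alpha$. The heart of the argument is that passing to leading forms is multiplicative enough that the algebraic relation forces a relation among the leading forms: by examining which power $f^k$ contributes the genuine minimal weight $k \varepsilon_\alpha + \min_k \delta_\alpha(b_k)$ across the sum, some cancellation among leading terms must occur. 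Concretely, I expect to show that $\ell_\alpha(f)$ must satisfy a polynomial relation over the leading forms of the $b_k$, and in particular that the support $J_\alpha$ of $\ell_\alpha(f)$ is constrained to lie in a fixed finite-dimensional family determined by the finitely many monomials appearing in $b_0, \dots, b_d$.

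The next step is the counting contradiction. The supports of the $b_k$ involve only finitely many distinct vectors in $\BZ^m$, and consequently the possible leading forms $\ell_\alpha(f)$ arising from the relation can take only finitely many ``shapes'' — more precisely, $J_\alpha$ is forced to be a translate of, or built from, a bounded configuration of lattice points drawn from the finite support of the $b_k$. Since $\{J_\alpha : \alpha \in A\}$ is assumed to be infinite, infinitely many distinct sets $J_\alpha$ would have to be accommodated by finitely many configurations, which is impossible. This yields the contradiction and shows $f$ cannot be algebraic, hence in particular not rational.

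The main obstacle I anticipate is making the ``leading form'' operation behave well with respect to the algebraic relation. The functional $\delta_\alpha$ takes values in $\BR$, not $\BZ$, so the weights need not be discrete, and I must be careful that the minimum over the support of a product or sum is actually attained and behaves multiplicatively (i.e.\ that $\ell_\alpha(fg) = \ell_\alpha(f)\ell_\alpha(g)$, which requires that leading forms of $f$ and $g$ do not cancel — valid here because $R$ is an integral domain and the monoid $M$ satisfies \eqref{E250}, ruling out spurious cancellation). Establishing that the minimal weight in the relation $\sum b_k f^k = 0$ is achieved by a nonzero leading form, so that the relation genuinely descends to the leading forms rather than collapsing to $0 = 0$, is the delicate point; I expect to handle it by choosing, among the indices $k$ realizing the minimal total weight, the analysis that forces a nontrivial algebraic dependence of $\ell_\alpha(f)$, and then converting that into the finiteness bound on the possible $J_\alpha$.
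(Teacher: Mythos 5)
Your overall skeleton is in fact the paper's: grade $R[M]$ by each $\delta_\alpha$, extract the leading form $f_\alpha=\sum_{D\in J_\alpha}a_Dt^D$, and descend the assumed relation $\sum_k b_k f^k=0$ to a relation among leading forms by jointly minimizing $\delta_\alpha(D)+k\varepsilon_\alpha$ over the finitely many nonzero terms $b_{D,k}t^Dx^k$ of $F(t,x)=\sum b_{D,k}t^Dx^k$. Your instinct on the ``delicate point'' is also the paper's resolution: the sub-sum $G_\alpha(t,x)$ of terms achieving the minimal weight $\mu_\alpha$ consists of genuinely nonzero coefficients of $F$, so $G_\alpha\ne 0$ as a polynomial, and comparing the weight-$\mu_\alpha$ components of $0=F(t,f(t))$ yields $G_\alpha(t,f_\alpha(t))=0$ directly; one does not even need a multiplicativity lemma $\ell_\alpha(fg)=\ell_\alpha(f)\ell_\alpha(g)$ for this, only that every monomial of $t^Df^k$ has weight at least $\delta_\alpha(D)+k\varepsilon_\alpha$, with equality contributions coming exactly from $t^Df_\alpha^k$.

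The genuine gap is in your final counting step. You assert that the support $J_\alpha$ of a root of the leading relation is ``a translate of, or built from, a bounded configuration of lattice points drawn from the finite support of the $b_k$,'' or that the $J_\alpha$ lie in a ``fixed finite-dimensional family.'' The first assertion is false: the coefficients of $G_\alpha$ are only $\delta_\alpha$-homogeneous, and a homogeneous piece may have several terms. For instance, with $M=\BN^2$ and $\delta(d_1,d_2)=d_1-d_2$, the coefficient $1-t_1t_2$ is homogeneous of weight $0$, and the degree-one relation $(1-t_1t_2)\,x=c_0(t)$ has the root $c_0(t)\sum_{k\ge 0}(t_1t_2)^k\in R[[M]]$, whose support is an infinite arithmetic progression not confined to any bounded configuration extracted from the supports of the coefficients. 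The second assertion is too weak even if true: a finite-dimensional family can contain infinitely many distinct sets $J_\alpha$, so no contradiction follows from it. What closes the argument, and what the paper actually does, is pure root counting: the leading relations $G_\alpha$ range over the \emph{finite} set $\Pi$ of sub-sums of the fixed polynomial $F$; distinct $J_\alpha$ give distinct $f_\alpha$ (this is precisely where $a_D\ne 0$ enters, since the support of $f_\alpha$ is exactly $J_\alpha$); hence infinitely many $J_\alpha$ force some single nonzero $G\in\Pi$ to have infinitely many roots in $R[[M]]$. Since $R[[M]]$ is an integral domain (using \eqref{E250} and $R$ a domain), a nonzero polynomial of degree at most $d=\deg_x F$ has at most $d$ roots there, a contradiction. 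Replacing your configuration count by this root count repairs the proof; as written, the step would fail.
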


\begin{proof}
Each $\delta\in \Hom_\BZ(\BZ^m, \BR)$ makes $R[M]$ into an $\BR$-graded ring by
\begin{equation}\label{E070}
R[M] = \bigoplus_{d\in \BR} \left(\bigoplus_{\delta(D) = d} R t^D \right).
\end{equation}
We can call $\delta(D)$ the weight of $t^D$ under this grading.

Let
\begin{equation}\label{E062}
f_\alpha(t) = \sum_{D\in J_\alpha} a_D t^D\in R[[M]].
\end{equation}
Since $\{ J_\alpha: \alpha\in A\}$ is an infinite set, the set
$\{f_\alpha(t): \alpha\in A\}\subset R[[M]]$ is also infinite since $a_D\ne 0$
for all $D\in J$.

Suppose that $f(t)$ is algebraic. Then there exists a nonzero polynomial
$F(t, x)\in R[M, x] = R[M][x]$
such that $F(t, f(t)) = 0$ in $R[[M]]$. We write
\begin{equation}\label{E056}
F(t, x) = \sum_{D\in M}\sum_{k=0}^\infty b_{D,k} t^D x^k
\end{equation}
where $b_{D,k}\in R$ vanishes outside finitely many pairs $(D, k)$.

Let $\Pi$ be the subset of $R[M,x]$ given by
\begin{equation}\label{E057}
\begin{split}
\Pi &= \bigg\{ G(t,x) = \sum_{D\in M}\sum_{k=0}^\infty c_{D,k} t^D x^k:\\
&\quad\quad
c_{D,k} = b_{D,k}\text{ or } c_{D,k} = 0 \text{ for each pair } (D,k)\bigg\}.
\end{split}
\end{equation}
Obviously, $\Pi$ is a finite set.

For each $\alpha\in A$, we let
\begin{equation}\label{E071}
\mu_\alpha = \min_{b_{D,k}\ne 0} (\delta_\alpha(D) + k \varepsilon_\alpha)
\end{equation}
and
\begin{equation}\label{E058}
G_\alpha(t, x) = \sum_{\delta_\alpha(D) + k \varepsilon_\alpha = \mu_\alpha} b_{D,k} t^D x^k.
\end{equation}
Obviously, $G_\alpha(t,x)\ne 0$, $G_\alpha(t,x)\in \Pi$
and we see that $G_\alpha(t, f_\alpha(t)) = 0$ 
by collecting the terms of $F(t, f(t))$ of the lowest weight $\mu_\alpha$ under the grading
given by $\delta_\alpha$.

Since $\Pi$ is finite and $\{ f_\alpha(t)\}$ is infinite, there exists $G(t,x) \in \Pi\backslash\{0\}$
such that $G(t, g(t)) = 0$ for infinitely many different $g(t)\in R[[M]]$. In other words,
the polynomial $G(t, x)$ has infinitely many roots in $R[[M]]$.
Obviously, this is impossible for an integral domain $R[[M]]$.
This proves that $f(t)$ is transcendental.
\end{proof}

\begin{cor}\label{Transcendence 1}
Let $X$ be a smooth projective variety of dimension $n$ with $\Pic(X)\isom \BZ^m$. If there
are infinitely many effective divisors $D\in\Pic(X)$ each generating an extremal ray of the effective cone $\NE^1(X)$ of $\Pic(X)$, then $E^1(X)$ is transcendental.  Moreover,  for every smooth projective variety $Y$
that dominates $X$ via a birational regular map $\pi: Y\to X$, $E^1(Y)$ is transcendental.
\end{cor}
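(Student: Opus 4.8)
The plan is to deduce \coref{Transcendence 1} as a direct application of \propref{PROP006}. The essential idea is that an extreme ray of the effective cone is, by definition, a ray that cannot be written nontrivially as a positive combination of other effective classes, and this ``extremality'' is exactly detected by a supporting linear functional that achieves its minimum on the ray alone. So the first step is to recall that each extreme ray $\rho$ of the closed convex cone $\NE^1(X)$ admits a \emph{supporting hyperplane}: a functional $\delta_\rho\in\Hom_\BZ(\BZ^m,\BR)$ (indeed in $\Hom_\BZ(\BZ^m,\BQ)$, hence realizable with rational coefficients after scaling) such that $\delta_\rho\ge 0$ on all of $\NE^1(X)$ and $\delta_\rho$ vanishes exactly on $\rho$. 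Since $\NE^1(X)$ is the closure of the convex hull of $M=M_X$, the functional $\delta_\rho$ is nonnegative on every effective class, so its minimum over $M$ is $0$ and is attained precisely on $M\cap\rho$.

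Next I would set up the bookkeeping to match the hypotheses of \propref{PROP006}. Take $J = M_X$ (the full monoid of effective divisors) and $a_D = h^0(X,\CO_X(D))$, so that $f(t)=E^1(X)$ and $a_D\ne 0$ for every $D\in J$ because $D$ effective forces $h^0\ge 1$. Let $A$ index the infinite supply of effective divisors generating extreme rays given in the hypothesis; after discarding rays that coincide, we may assume the rays are pairwise distinct and still infinite. For each such ray indexed by $\alpha$, take $\delta_\alpha$ to be the supporting functional constructed above (scaled to be integral). Then $\varepsilon_\alpha=\min_{D\in J}\delta_\alpha(D)=0$ exists, and
\begin{equation}\label{Ecrit}
J_\alpha = \{D\in J:\delta_\alpha(D)=0\} = M_X\cap\rho_\alpha
\end{equation}
is the set of effective classes on the $\alpha$-th extreme ray. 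Since the rays $\rho_\alpha$ are pairwise distinct and each contains a nonzero effective class, the sets $J_\alpha$ are pairwise distinct, so $\{J_\alpha:\alpha\in A\}$ is infinite. All the hypotheses of \propref{PROP006} are thus satisfied with $R=\BZ$, and it yields that $E^1(X)$ is transcendental.

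For the second assertion, let $\pi:Y\to X$ be a birational regular map of smooth projective varieties. The plan is to transport the extreme rays upward along $\pi^*$. Pullback gives an injection $\pi^*:\Pic(X)\hookrightarrow\Pic(Y)$, and for an effective divisor $D$ on $X$ we have $h^0(Y,\CO_Y(\pi^*D))=h^0(X,\CO_X(D))$ since $\pi_*\CO_Y=\CO_X$. I would then argue that $\pi^*D$ generates an extreme ray of $\NE^1(Y)$ whenever $D$ generates an extreme ray of $\NE^1(X)$: compose the supporting functional $\delta_\rho$ on $\Pic(X)\otimes\BR$ with $\pi^*$ to obtain a functional $\delta_\rho\circ(\pi^*)^{-1}$ defined on the saturated subgroup $\pi^*\Pic(X)$ and extend it to all of $\Pic(Y)\otimes\BR$ so as to remain nonnegative on $\NE^1(Y)$ and vanish only on $\pi^*\rho$. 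The applicability of \propref{PROP006} to $Y$ then follows verbatim, giving transcendence of $E^1(Y)$.

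The main obstacle is the last extension step: it is easy to pull the supporting functional back to the subspace $\pi^*\Pic(X)\otimes\BR$, but one must extend it to the larger group $\Pic(Y)\otimes\BR$ while preserving nonnegativity on all of $\NE^1(Y)$, not merely on the image of $\NE^1(X)$. The natural way is to use that $\pi$ is birational and regular, so the exceptional locus contributes finitely many extra extreme rays to $\NE^1(Y)$ and $\pi^*\NE^1(X)$ is a face of $\NE^1(Y)$; choosing the extension to be strictly positive on the exceptional directions keeps $\pi^*D$ on a genuine extreme ray of the larger cone. I expect verifying this face structure (or equivalently that $\pi^*$ of an extreme ray stays extreme, and that distinct rays stay distinct) to be the only nontrivial point, the rest being a direct invocation of \propref{PROP006}.
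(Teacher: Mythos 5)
Your first half is essentially identical to the paper's proof: take $J = M_X$ with $a_D = h^0(D) \ge 1$, for each extreme ray pick a supporting functional $\delta_\alpha$ (the paper takes $\delta_\alpha\in \NM^{n-1}(X)$, using strong convexity of $\NE^1(X)$), note $\varepsilon_\alpha = 0$ and $J_\alpha = [\alpha]\cap M$, and invoke \propref{PROP006}. (Your parenthetical claim that the $\delta_\alpha$ can be taken rational is both unnecessary --- \propref{PROP006} allows functionals in $\Hom_\BZ(\BZ^m,\BR)$ --- and not always achievable for an irrational extreme ray; but this is harmless since you never use it.)

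The second half has a genuine gap, exactly at the step you flag as the ``only nontrivial point,'' and that step is false rather than merely delicate. Already for $\pi: Y=\Bl_p\PP^2 \to \PP^2 = X$ one has $\pi^*H = (H-E) + E$, a sum of two non-proportional effective classes, so $\pi^*H$ does not generate an extreme ray of $\NE^1(Y)$ and $\pi^*\NE^1(X)$ is not a face of $\NE^1(Y)$. More generally, whenever an effective $D$ on the ray $\rho$ meets the locus where $\pi$ is not an isomorphism, $\pi^*D = \widetilde{D} + F$ with $\widetilde{D}, F$ nonzero effective; any functional $\delta\ge 0$ on $\NE^1(Y)$ with $\delta(\pi^*D)=0$ must then vanish on both $\widetilde{D}$ and $F$, so no extension vanishing exactly on $\pi^*\rho$ exists, and your proposed ``strictly positive on the exceptional directions'' choice would force $\delta(\widetilde{D}) = \delta(\pi^*D) - \delta(F) < 0$ on the effective class $\widetilde{D}$, contradicting nonnegativity. (Your finiteness claim also fails: $Y$ may acquire infinitely many new extreme rays, e.g.\ infinitely many $(-1)$-curves after further blow-ups.) The repair --- and this is what the paper does in one line, ``apply the same argument with $\pi^*\delta_\alpha$'' --- is to pull back the \emph{functionals} rather than push forward the \emph{rays}, and to exploit that \propref{PROP006} never requires the sets $J_\alpha$ to be rays. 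Set $\delta'_\alpha(D') = \delta_\alpha(\pi_*D')$, i.e.\ pair $\pi^*\delta_\alpha$ with $D'$ via the projection formula. Since $\pi_*$ of an effective divisor is effective or zero, $\delta'_\alpha \ge 0$ on $M_Y$ with minimum $\varepsilon_\alpha = 0$, and its zero set is $J'_\alpha = \{ D'\in M_Y : \pi_*D' \in [\alpha]\}$, which contains all $\pi$-exceptional effective classes together with everything pushing forward into the ray $[\alpha]$. These sets are pairwise distinct, since the proper transform of a generator of $[\alpha]$ lies in $J'_\alpha$ but in no $J'_\beta$ for $\beta\ne\alpha$; hence $\{J'_\alpha\}$ is infinite and \propref{PROP006} applies to $E^1(Y)$ directly, with no extension problem arising at all.
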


\begin{proof}
Assume that $A$ is the set of all effective classes
in $M$ that generate extremal rays of $\NE^1(X)$. Since $\NE^1(X)$ is strongly convex, there exists
$\delta_\alpha \in \NM^{n-1}(X)$ for each $\alpha\in A$, such that $\delta_\alpha(D)\ge 0$ for
all $D\in \NE^1(X)$ and $\delta_\alpha(D) = 0$ if and only if $D$ lies on the ray $[\alpha]$ generated
by $\alpha$. The corresponding $\varepsilon_\alpha$ and 
$J_\alpha$ defined by \eqref{E010} and \eqref{E055} are exactly
$\varepsilon_\alpha = 0$ and $J_\alpha = [\alpha]\cap M$.
Here we are trying to apply Proposition \ref{PROP006} with $J = M$.

Obviously, $\{J_\alpha: \alpha\in A\}$ is an infinite set. Consequently,
\begin{equation}\label{E229}
\sum_{D\in M} a_D t^D
\end{equation}
is transcendental provided that $a_D \ne 0$ for all $D\in M$. It follows that
$E^1(X)$ is transcendental.

For $Y$ dominating $X$ via a birational regular map $\pi: Y\to X$, it is enough to apply the same
argument as above with $\pi^* \delta_\alpha$.
\end{proof}

Based upon Proposition \ref{PROP006}, we can deduce another criterion  with the following observation: for an arbitrary nonzero effective divisor $F$ of $X$,
\[
E^1(X) \text{ is trancendental} \iff (1 - t^F) E^1(X) \text{ is trancendental}.
\]
Note that
\begin{equation}\label{E059}
(1 - t^{F}) E^1(X)
= (1 - t^{F}) \sum h^0(D) t^D
= \sum (h^0(D) - h^0(D - F)) t^D,
\end{equation}
thus  the nonzero terms $t^D$ of $(1-t^F) E^1(X)$ satisfy that $h^0(D) > h^0(D-F)$.

For each integral divisor $F\ne 0$ on $X$,
let $L_F\subset M$ be the submonoid
\begin{equation}\label{E073}
L_F = \{ D\in \Pic(X): h^0(X, D) > h^0(X, D - F) \}.
\end{equation}
Or equivalently, $L_F$ consists of effective divisors $D$ such that
$F\not\subset D_f$.

In some special cases, as we will see, it is even true that  
\[
(1 - t^{F}) E^1(X)=E^1(F)
\]
under the pullback $\Pic(X) \to \Pic(F)$.

If the cone $\overline{\Conv(L_F)}\subset H^2(X,\BR)$
has infinitely many extremal rays generated by
classes in $L_F$, then we can apply Proposition \ref{PROP006} similarly to the proof of Corollary \ref{Transcendence 1} to conclude that
\begin{equation}\label{E074}
\sum_{D\in L_{F}} a_D t^D
\end{equation}
is transcendental provided that $a_D \ne 0$ for all $D\in L_{F}$.
It follows that $(1 - t^{F}) E^1(X)$ and hence $E^1(X)$ are transcendental.

\begin{cor}
\label{PROP203}
Let $X$ be a smooth projective variety of dimension $n$ with $\Pic(X)\isom \BZ^m$.
If there is an integral divisor $F$ on $X$ such that there are infinitely many $E\in L_F$ each
generating an extremal ray of the cone $\overline{\Conv(L_F)}$, then
$E^1(X)$ is transcendental. Moreover, for every smooth projective variety $Y$
that dominates $X$ via a birational regular map $Y\to X$, $E^1(Y)$ is transcendental.
\end{cor}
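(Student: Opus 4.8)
The plan is to prove the criterion exactly as the paragraph preceding the statement indicates, and then to transport it to $Y$ by pulling back functionals. For the first assertion I would use the equivalence recorded above the corollary to replace $E^1(X)$ by $(1-t^{F})E^1(X)$. By \eqref{E059} this series equals $\sum_{D}(h^0(D)-h^0(D-F))t^{D}$, and since multiplication by a section of $\CO(F)$ gives $h^0(D)\ge h^0(D-F)$ with strict inequality exactly on $L_F$ (see \eqref{E073}), its support is precisely $L_F$ and every coefficient there is strictly positive. I would then apply \propref{PROP006} with $J=L_F$. Because $\overline{\Conv(L_F)}\subseteq\NE^1(X)$ is strongly convex by \eqref{E250}, to each of the infinitely many effective classes $\alpha\in L_F$ generating an extreme ray of $\overline{\Conv(L_F)}$ I can attach a supporting functional $\delta_\alpha\in\Hom_\BZ(\BZ^m,\BR)$ with $\delta_\alpha\ge 0$ on $\overline{\Conv(L_F)}$ and $\delta_\alpha(D)=0$ on the cone iff $D\in[\alpha]$, just as in the proof of \coref{Transcendence 1}. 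Then $\varepsilon_\alpha=\min_{D\in L_F}\delta_\alpha(D)=0$ is attained at $\alpha$, $J_\alpha=[\alpha]\cap L_F$, and for distinct rays $\delta_\beta(\alpha)>0$ forces $\alpha\in J_\alpha\setminus J_\beta$; hence $\{J_\alpha\}$ is infinite and \propref{PROP006} gives transcendence of $(1-t^{F})E^1(X)$, hence of $E^1(X)$.

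For the ``moreover'' part I would run the identical scheme on $Y$ (reducing, if convenient, to the case where $\pi$ is a composite of blow-ups along smooth centers). By the same equivalence it suffices to show $(1-t^{\pi^{*}F})E^1(Y)=\sum_{D'\in L^Y_{\pi^{*}F}}(h^0(Y,D')-h^0(Y,D'-\pi^{*}F))t^{D'}$ is transcendental, where $L^Y_{\pi^{*}F}$ is the monoid \eqref{E073} built on $Y$ from $\pi^{*}F$; its support is again exactly $L^Y_{\pi^{*}F}$ with positive coefficients. I would apply \propref{PROP006} with $J=L^Y_{\pi^{*}F}$ and the pulled-back functionals $\delta'_\alpha:=\pi^{*}\delta_\alpha$, which by the projection formula act as $\delta'_\alpha(D')=\delta_\alpha(\pi_{*}D')$ with $\pi_{*}\colon\Pic(Y)\to\Pic(X)$. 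Since $\pi_{*}\pi^{*}=\mathrm{id}$ and $h^0(Y,\pi^{*}D)=h^0(X,D)$, one has $\pi^{*}\alpha\in L^Y_{\pi^{*}F}$ and $\delta'_\alpha(\pi^{*}\alpha)=\delta_\alpha(\alpha)=0$. Distinctness of the minimizer sets is then immediate: $\delta'_\beta(\pi^{*}\alpha)=\delta_\beta(\alpha)>0$ for $[\alpha]\ne[\beta]$, so $\pi^{*}\alpha$ separates $J'_\alpha$ from $J'_\beta$ and $\{J'_\alpha\}$ is infinite.

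The step I expect to be the main obstacle is verifying the hypothesis of \propref{PROP006} that $\varepsilon'_\alpha=\min_{D'\in L^Y_{\pi^{*}F}}\delta_\alpha(\pi_{*}D')$ exists, i.e. that $\delta_\alpha\circ\pi_{*}$ is bounded below (by $0$) on $L^Y_{\pi^{*}F}$. This amounts to $\pi_{*}(L^Y_{\pi^{*}F})\subseteq\{\delta_\alpha\ge 0\}$; but $\delta_\alpha$ is nonnegative only on $\overline{\Conv(L_F)}$, which may be a proper subcone of $\NE^1(X)$, so I must show $\pi_{*}(L^Y_{\pi^{*}F})\subseteq\overline{\Conv(L_F)}$. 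I would do this by pushing forward fixed parts: if $D'\in L^Y_{\pi^{*}F}$ some section $s$ satisfies $\mathrm{div}(s)+D'\not\ge\pi^{*}F$, and writing $\pi^{*}F=\sum_j m_j\widetilde F_j+\sum_i e_iE_i$ (with $\widetilde F_j$ the proper transforms of the components of $F$ and $E_i$ exceptional) there are two cases. If the avoided component is some $\widetilde F_j$, pushing $s$ forward shows $F\not\le\mathrm{Fix}\,|\pi_{*}D'|$, i.e. $\pi_{*}D'\in L_F$ outright. The delicate case is when only an exceptional $E_i$ is avoided: then every section of $D'$ vanishes along $\sum_j m_j\widetilde F_j$, so $D'=\sum_j m_j\widetilde F_j+D''$ and $\pi_{*}D'=F+\pi_{*}D''$, and one must deduce $\pi_{*}D'\in\overline{\Conv(L_F)}$ using that $F$ itself lies in $\overline{\Conv(L_F)}$ (it is the limit of movable perturbations $F+\tfrac1N H$, $H$ ample, which lie in $L_F$), probably via an induction on the exceptional multiplicities $e_i$; alternatively one factors $\pi$ into point blow-ups and adapts \lemref{ME3} from $\NE^1$ to the subcone $\overline{\Conv(L_F)}$. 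This exceptional bookkeeping is vacuous when $F$ meets no center of $\pi$ (so $\pi^{*}F=\widetilde F$), and the whole difficulty evaporates when $\overline{\Conv(L_F)}=\NE^1(X)$, since then each $\delta_\alpha$ is nef, $\delta_\alpha\circ\pi_{*}$ is automatically nef on $Y$, and the argument closes verbatim as in \coref{Transcendence 1}. Granting the bounded-below property, \propref{PROP006} applies and $E^1(Y)$ is transcendental.
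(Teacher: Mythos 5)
Your first paragraph reproduces the paper's own argument for the first assertion exactly (multiply by $1-t^{F}$, observe via \eqref{E059} and \eqref{E073} that the support is $L_F$ with strictly positive coefficients, and feed the supporting functionals at the infinitely many extreme rays into \propref{PROP006} with $J=L_F$), and that part is fine. The genuine gap is in the ``moreover'' half, and it is precisely the obstacle you flagged --- except that on your chosen route it is not merely an unverified step but an actual failure. Suppose $F$ passes through a center of $\pi$, so $\pi^{*}F=\widetilde{F}+\sum e_iE_i$ with some $e_i>0$ at a center $p_i$. For any effective $G$ on $X$ with $p_i\notin\Bs|G|$ one has $h^0(\widetilde{F}+\pi^{*}G)\ge h^0(G)>h^0(\pi^{*}G-\sum e_iE_i)=h^0(\widetilde{F}+\pi^{*}G-\pi^{*}F)$, so $\widetilde{F}+\pi^{*}G\in L^Y_{\pi^{*}F}$, while $\pi_{*}(\widetilde{F}+\pi^{*}G)=F+G$. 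If some such $G_0$ has $\delta_\alpha(G_0)<0$ --- nothing in the hypotheses rules this out once $\overline{\Conv(L_F)}\subsetneq\NE^1(X)$, which is exactly the regime where \coref{PROP203} says more than \coref{Transcendence 1} --- then $\delta_\alpha(\pi_{*}(\widetilde F+m\pi^{*}G_0))=\delta_\alpha(F)+m\delta_\alpha(G_0)\to-\infty$, so the minimum \eqref{E010} required by \propref{PROP006} does not exist. Worse, the ingredient you lean on for the repair, $F\in\overline{\Conv(L_F)}$, is false in general: on $X=\Bl_q\PP^2$ with $F=E$ the exceptional curve and $L$ the pull-back of a line, every effective class $aL+cE$ with $c>0$ has $E$ in its fixed part, so $L_E\setminus\{0\}=\{aL-bE:\,a\ge1,\,0\le b\le a\}$ and $\overline{\Conv(L_E)}$ is the cone spanned by $L$ and $L-E$, which does not contain $E$; correspondingly $NE+H$ with $H=2L-E$ ample equals $2L+(N-1)E\notin L_E$, so the perturbation argument ``$F+\tfrac1N H\in L_F$'' breaks down.

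The clean repair is not more exceptional bookkeeping but a change of multiplier: on $Y$ use $1-t^{\widetilde{F}}$, with $\widetilde{F}$ the \emph{proper} transform, instead of $1-t^{\pi^{*}F}$. Since $\widetilde{F}$ has no exceptional components, any section $s\in H^0(Y,D')$ with $\mathrm{div}(s)\not\ge\widetilde{F}$ fails along a non-exceptional prime divisor; pushing forward (via $H^0(Y,D')\hookrightarrow H^0(X,\pi_{*}D')$, with $\mathrm{div}(\bar{s})=\pi_{*}\mathrm{div}(s)$) gives $\mathrm{div}(\bar{s})\not\ge F$, hence $\pi_{*}\bigl(L^Y_{\widetilde{F}}\bigr)\subseteq L_F$ --- your problematic case (b) simply disappears. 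Consequently $\delta_\alpha\circ\pi_{*}\ge0$ on the whole support, the minimum $0$ is attained at $\pi^{*}\alpha$ because $h^0(Y,\pi^{*}\alpha-\widetilde{F})=h^0(Y,\pi^{*}(\alpha-F)+\sum e_iE_i)=h^0(X,\alpha-F)<h^0(X,\alpha)=h^0(Y,\pi^{*}\alpha)$, and your separation step $\delta_\beta(\pi_{*}\pi^{*}\alpha)=\delta_\beta(\alpha)>0$ closes the proof verbatim. For comparison, the paper itself proves the ``moreover'' only implicitly, by the one-line analogue of \coref{Transcendence 1} (``apply the same argument with $\pi^{*}\delta_\alpha$''), and in the applications where the exceptional escape could actually occur (cases (3) and (4) of \thmref{THM002}) it sidesteps the issue entirely via \propref{ME5}; so your analysis correctly isolates the weak point of the naive pull-back, but your proposed fixes (induction on the $e_i$, or $F\in\overline{\Conv(L_F)}$) do not close it, whereas switching the multiplier to $\widetilde{F}$ does.
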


\begin{prop}
\label{ME5}
Let $X$ be the blow-up of $\mathbb{P}^{r_1}\times\cdots\times\mathbb{P}^{r_{p}}$ at a finite set $\Lambda$ of points, where $r_i\geq 2$ for all $i$ and $r_1>2$ if $p=1$. Assume that $\Lambda $ lies on a linear subspace $P$ of codimension $1$, i.e., the pull-back of some hyperplane $H_{i_0}\subset \mathbb{P}^{r_{i_0}}$. Let the closed immersion $i:\widehat{P}\hookrightarrow X$ be the proper transform of $P$,
where $\widehat{P}$ is the blow-up of $P$ at $\Lambda$ 
and $i$ induces a natural isomorphism $\text{Pic}(X)\overset{i^*}\simeq  \text{Pic}(\widehat{P})$ under our assumptions. Then
\begin{equation}\label{E351}
(1 - t^{\widehat{P}}) E^1(X)=E^1(\widehat{P}).
\end{equation}
In particular, the monoid
\[
L_{\widehat{P}} = \{ D\in \text{Pic}(X): h^0(X, D) > h^0(X, D - \widehat{P})\},
\]
is isomorphic to the monoid $M_{\widehat{P}}$ of effective line bundles on $\widehat{P}$.
Moreover, for every $D\in L_{\widehat{P}}$, 
\[
h^0(\widehat{P}, i^*\mathcal{O}(D))=h^0(X, D)-h^0(X, D - \widehat{P}).
\]
\end{prop}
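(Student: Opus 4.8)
The plan is to produce an exact sequence relating sections on $X$ to sections on $\widehat{P}$ and then to match up the Euler-Chow series term by term. Let me set up the geometry first. Write $Y = \mathbb{P}^{r_1}\times\cdots\times\mathbb{P}^{r_p}$ and let $P\subset Y$ be the codimension-one linear subspace cutting out $\Lambda$, so that $P$ is the pullback of a hyperplane $H_{i_0}$ and $\CO_Y(P) = \CO_Y(\widehat{P})$ pulls back appropriately. Since $\Lambda\subset P$, blowing up $X\to Y$ along $\Lambda$ induces the blow-up of $P$ at $\Lambda$, and $\widehat{P}\hookrightarrow X$ is the proper transform. The key structural fact I would exploit is that $\CO_X(\widehat{P})$ restricts to give the normal bundle and that $\widehat{P}$ is a divisor with a well-behaved restriction sequence.

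The central step is the short exact sequence of sheaves
\begin{equation}\label{E360}
0 \xrightarrow{} \CO_X(D - \widehat{P}) \xrightarrow{} \CO_X(D) \xrightarrow{} \CO_{\widehat{P}}(i^* D) \xrightarrow{} 0,
\end{equation}
obtained by twisting the ideal sheaf sequence $0\to\CO_X(-\widehat{P})\to\CO_X\to i_*\CO_{\widehat{P}}\to 0$ by $\CO_X(D)$. Taking the long exact sequence in cohomology, I get
\[
0\to H^0(X, D-\widehat{P})\to H^0(X, D)\to H^0(\widehat{P}, i^*D)\to H^1(X, D-\widehat{P}).
\]
The crux is to show that the restriction map $H^0(X,D)\to H^0(\widehat{P}, i^*D)$ is \emph{surjective} for every $D$, i.e. that it hits all of $H^0(\widehat{P}, i^*D)$. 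If I can establish this surjectivity, then $h^0(\widehat{P}, i^*D) = h^0(X,D) - h^0(X, D-\widehat{P})$ holds identically, which is exactly the last displayed formula in the statement and simultaneously gives $(1-t^{\widehat{P}})E^1(X) = E^1(\widehat{P})$ upon summing against $t^D$ as in \eqref{E059}. The isomorphism $L_{\widehat{P}}\cong \NE^1(\widehat{P})$ then follows because $D\in L_{\widehat{P}}$ means $h^0(X,D) > h^0(X,D-\widehat{P})$, which under surjectivity is equivalent to $h^0(\widehat{P}, i^*D) > 0$, i.e. $i^*D$ effective on $\widehat{P}$, under the identification $i^*\colon\Pic(X)\xrightarrow{\sim}\Pic(\widehat{P})$.

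The surjectivity is the main obstacle, and I would attack it by pulling back to $Y$. The restriction map factors compatibly with the map $X\to Y$: a section of $\CO_{\widehat{P}}(i^*D)$ corresponds, away from the exceptional loci, to a section on $P$, and since $P$ is a linear (hence projectively normal) subspace of a product of projective spaces, the restriction $H^0(Y, \CO_Y(D'))\to H^0(P, \CO_P(D'))$ is surjective for the relevant line bundles $D'$ on $Y$ (this is just the surjectivity of restriction of multihomogeneous polynomials to a coordinate hyperplane). One then has to check that this surjectivity survives the blow-up, i.e. that sections on $\widehat{P}$ vanishing to the prescribed orders at the points of $\Lambda$ lift to sections on $X$ with matching vanishing. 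Because blowing up $\Lambda\subset P$ commutes with blowing up $\Lambda\subset Y$ in the sense that $\widehat{P}$ is itself the blow-up $\Bl_\Lambda P$, the vanishing conditions defining sections on $\widehat{P}$ are inherited from those on $X$, and the lifting follows from the surjectivity on $Y$ together with the identification of multiplicities along the exceptional divisors. I expect that the verification that the multiplicity bookkeeping along the exceptional divisors matches up on both sides — ensuring that a section of $i^*D$ on $\widehat{P}$ genuinely extends rather than merely its image in $H^0(P)$ — is the delicate point that requires care, whereas the underlying surjectivity on the ambient product of projective spaces is elementary.
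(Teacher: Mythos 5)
Your strategy is essentially the paper's own proof in cohomological packaging: the paper's argument is exactly the assertion that restriction $H^0(X,D)\to H^0(\widehat{P}, i^*\mathcal{O}(D))$ is surjective with kernel $H^0(X,D-\widehat{P})$, proved by identifying sections with multihomogeneous polynomials via Fact \ref{ME4}, noting that the kernel consists of the sections divisible by $x_{r_1}$, and lifting a section from $\widehat{P}$ by the canonical polynomial not involving $x_{r_1}$. The ``delicate point'' you defer --- multiplicity bookkeeping at $\Lambda$ --- is in fact the easy part and is handled in one line in the paper: since $\Lambda\subset P$ and the canonical lift is constant in the normal direction $x_{r_1}$, the vanishing order of the lift at $P_j\in Y$ equals the vanishing order of the original section at $P_j$ computed inside $P$, so no correction is needed.

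There is, however, one concrete omission. You claim surjectivity for \emph{every} $D\in\Pic(X)$, but your lifting mechanism, as described (``sections on $\widehat{P}$ vanishing to the prescribed orders at the points of $\Lambda$''), only applies to divisors of the form $D=\sum_i a_iH_i-\sum_j b_jE_j$ with $b_j\ge 0$, for which sections of $i^*\mathcal{O}(D)$ really are polynomials on $P$ with prescribed vanishing. When some exceptional coefficients are positive, $i^*\mathcal{O}(D)$ acquires the fixed components $i^*E_j$, and one must first strip these base components off compatibly on both $X$ and $\widehat{P}$; this is the paper's reduction of its Cases (1)--(2) to Case (3) via \eqref{EqME2}, resting on the fact that positive multiples of exceptional divisors over points are fixed parts on any blow-up of dimension at least $2$. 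This step is routine but genuinely needed for your blanket surjectivity claim --- note that surjectivity is not formal, and can fail outside the intended setting (e.g.\ when $\dim\widehat{P}=1$, where $i^*\mathcal{O}(E_1)$ is a degree-one bundle with two sections while $h^0(X,E_1)=1$; this degenerate case is implicitly excluded by the requirement that $i^*$ be an isomorphism on Picard groups). With that truncation reduction added, your argument coincides with the paper's.
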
 

The proof of Proposition \ref{ME5} is based upon the following fact.

\begin{fact}\label{ME4}
Let $r_i\geq 1$ for $1\leq i\leq p$ and $X$ be the blow-up of  $\mathbb{P}^{r_1}\times\cdots\times\mathbb{P}^{r_{p}}$ at a finite set
$\Lambda=\{P_1,\cdots, P_n\}$ giving exceptional divisors $E_1, \cdots, E_n$. Assume that $H_i$ is the pull-back of the hyperplane class of $\mathbb{P}^{r_i}$. Then 
\begin{align}\label{EqME2}
h^0(X, \sum_{i=1}^{p}a_iH_i+\sum_{j=1}^nb_jE_j)=h^0(X, \sum_{i=1}^{p}a_iH_i +\sum_{j=1}^n \text{min}(b_j, 0)E_j ).
\end{align}
For $b_j\geq 0$, the space $H^0(X, \sum_{i=1}^{p}a_iH_i-\sum_{j=1}^nb_jE_j)$ can be identified with 
\begin{align}
\label{EqME3}
\{s\in \otimes_{i=1}^{p} H^0(\mathbb{P}^{r_i},  a_iH_i):\ \text{mult}(s, {P_j})\geq b_j
\text{ for all }  j \}.
\end{align} 
\end{fact}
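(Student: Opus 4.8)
The plan is to prove the two assertions separately, reducing each to the elementary geometry of the blow-up together with the cohomology of projective space. Write $\pi\colon X\to Y$ for the blow-up, where $Y=\mathbb{P}^{r_1}\times\cdots\times\mathbb{P}^{r_p}$ has dimension $r=r_1+\cdots+r_p$, so that each exceptional divisor is $E_j\cong\mathbb{P}^{r-1}$ with $\CO_X(E_j)|_{E_j}\cong\CO_{\mathbb{P}^{r-1}}(-1)$. The observation underlying everything is that each $H_i=\pi^*h_i$ is pulled back from $Y$ and hence restricts trivially to every exceptional divisor, since $E_j$ maps to the single point $P_j$; likewise $E_{j'}|_{E_j}$ is trivial for $j'\neq j$ because distinct exceptional divisors are disjoint.

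For \eqref{EqME2}, I would remove a positive coefficient on $E_j$ one unit at a time. Fix $j$ and suppose the current $E_j$-coefficient of a divisor $D$ equals $k>0$. Restricting
\begin{equation*}
0\to\CO_X(D-E_j)\to\CO_X(D)\to\CO_{E_j}(D)\to 0
\end{equation*}
and using the observation above, we get $\CO_X(D)|_{E_j}\cong\CO_{\mathbb{P}^{r-1}}(-k)$, which has no global sections since $k>0$. Hence $h^0(D)=h^0(D-E_j)$, and iterating over every index $j$ with $b_j>0$ lowers each positive $b_j$ to $0$ while leaving the remaining coefficients untouched. Because $\tfrac{b_j-|b_j|}{2}=\min(b_j,0)$, this is exactly the stated equality.

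For the identification \eqref{EqME3}, with all $b_j\geq 0$, I would push forward. The standard computation for a blow-up at reduced points gives $\pi_*\CO_X(-\sum_j b_jE_j)\cong\prod_j\mathcal{I}_{P_j}^{\,b_j}$, the ideal sheaf of functions vanishing to order $\geq b_j$ at $P_j$; combining this with the projection formula identifies $\pi_*\CO_X(\sum_i a_iH_i-\sum_j b_jE_j)$ with $\CO_Y(\sum_i a_ih_i)\otimes\prod_j\mathcal{I}_{P_j}^{\,b_j}$. Taking global sections (which commute with $\pi_*$) and invoking K\"unneth, $H^0(Y,\CO_Y(\sum_i a_ih_i))=\bigotimes_i H^0(\mathbb{P}^{r_i},\CO_{\mathbb{P}^{r_i}}(a_i))$, identifies $H^0(X,\sum_i a_iH_i-\sum_j b_jE_j)$ with the tensors $s$ satisfying $\mult(s,P_j)\geq b_j$ for all $j$, as claimed.

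Both steps are routine, so the only place demanding care is verifying the two local inputs uniformly in the multi-point, multi-factor setting: that the pullback classes $H_i$ restrict trivially on each $E_j$, and that $\pi_*\CO_X(-\sum_j b_jE_j)$ is exactly the product of the powers $\mathcal{I}_{P_j}^{\,b_j}$. Since the blow-up is local and the points $P_j$ are distinct, these computations decouple point-by-point and reduce to the single-point case, so no genuine obstacle arises.
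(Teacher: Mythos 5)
Your proposal cannot diverge from the paper's argument because the paper offers none: Fact \ref{ME4} is recorded without proof and simply invoked in the proof of Proposition \ref{ME5}, so your write-up fills in the standard verification, and it is correct. Both halves proceed along the expected route --- for \eqref{EqME2}, the restriction sequence together with $\mathcal{O}_X(D)|_{E_j}\cong\mathcal{O}_{\mathbb{P}^{r-1}}(-k)$ and the vanishing of its sections for $k>0$, iterated until every positive coefficient reaches $\min(b_j,0)$; for \eqref{EqME3}, the identification $\pi_*\mathcal{O}_X(-\sum_j b_jE_j)\cong\prod_j\mathcal{I}_{P_j}^{\,b_j}$ (valid since the $P_j$ are distinct smooth points, where $\mathrm{mult}_{P_j}(s)\geq b_j$ is equivalent to membership in $\mathfrak{m}_{P_j}^{b_j}$) combined with the projection formula and K\"unneth --- and this is precisely the description of sections as multidegree polynomials with multiplicity conditions that the authors use when proving Proposition \ref{ME5}.
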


\begin{proof}[Proof of Proposition \ref{ME5}]
We may assume that $P$ is the pull-back of the hyperplane of $\mathbb{P}^{r_1}$ defined by $x_{r_1}=0$. 

Assume that $D\in L_{\widehat{P}}$.
Let us choose $s\in H^0(X, D)\backslash H^0(X, D-\widehat{P})$. Clearly $ i^*s$ is a nonzero section of $H^0(\widehat{P}, i^*\mathcal{O}(D))$. Thus $i^*\mathcal{O}(D)\in M_{\widehat{P}}$.
Conversely, assume that $ i^*\mathcal{O}(D)\in M_{\widehat{P}}$. Since sections of $\mathcal{O}(D)$ and $ i^*\mathcal{O}(D)$ can be identified with polynomials of multiple degrees in (\ref{EqME3}), for each section $\widehat{s}\in H^0(\widehat{P}, i^*\mathcal{O}(D))$,
there exists $s\in H^0(X, D)$ such that $i^*s=\widehat{s}$. Consequently, $D\in L_{\widehat{P}}$.

To prove the equation, we assume that $D=\sum_{i=1}^{p}a_iH_i+\sum_{j=1}^nb_jE_j\in L_{\widehat{P}}$. 
There are three cases:
\begin{enumerate}
\item $a_i=0$ for all $i$;
\item $a_i>0$ for some $i$ and $b_j > 0$ for some $j$;
\item $a_i>0$ for some $i$ and $b_j\leq 0$ for all $j$.
\end{enumerate}
In Case (1), $h^0(D)>0$ and hence $b_j\geq 0$ for all $j$; the proposition holds trivially. Case (2) can be reduced to Case (3) by Equation (\ref{EqME2}). 

Thus it suffices to prove  the case that $D=\sum_{i=1}^{p}a_iH_i-\sum_{j=1}^nb_jE_j$
for all $a_i, b_j\geq0$. Identifying  $H^0(X,D)$ with the  space of multi-graded homogeneous polynomials in (\ref{EqME3}), then
\begin{align*}
&H^0(X, D-\widehat{P})\simeq \{s\in H^0(X,D):\ x_{r_1} \text{ is a factor of }s \};\\
&\frac{H^0(X, D)}{H^0(X, D-\widehat{P})}\simeq \{s\in   \otimes_{i=1}^{p} H^0(\mathbb{P}^{r_i-\delta_{i1}},  a_iH_i):\ \text{mult}(s, {P_j})\geq b_j \text{ for all } j \},
\end{align*}
where  $\delta_{11}=1$ and $\delta_{i1}=0$ if $i\neq 1$. Obviously the latter is isomorphic to $H^0(\widehat{P}, i^*\mathcal{O}(D))$ by applying (\ref{EqME3}) again.
\end{proof}

\subsection{Mukai's construction and generalizations}

In \cite{Mu2}, Mukai has constructed a family of smooth projective varieties with infinitely generated Cox ring; by establishing an isomorphism between the Cox ring of these varieties and the invariant ring of an action of Nagata type,  he thus obtained a family of counterexamples to Hilbert's 14th problem. 

Mukai's theorem \cite[Theorem 3]{Mu2} can be reformulated as follows: 

\begin{thmm}[Mukai] 
Let $r>2$ and $X$ be the blow-up of $(\mathbb{P}^{r-1})^{p-1}$ at $q>r$ points in very general position. 
Assume that 
\begin{align}\label{EqME1}
\frac{1}{p}+\frac{1}{r}+\frac{1}{q-r}\leq 1.
\end{align}
Then $\text{Cox}(X)$ is  infinitely generated. When $p=2$, it is the result in  \cite{Mu1}. 
\end{thmm}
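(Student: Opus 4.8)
The plan is to deduce infinite generation of $\Cox(X)$ from the failure of $\NE^1(X)$ to be rational polyhedral, and to obtain the latter from an infinite Weyl-group orbit of extremal effective classes. Since $X$ is a smooth projective variety whose $\Pic(X)$ is free of finite rank, the Hu--Keel theorem is available: were $\Cox(X)$ finitely generated, $X$ would be a Mori dream space, and hence $\NE^1(X)$ would be rational polyhedral, as noted after Definition \ref{DEFN003}. So it suffices to produce infinitely many distinct extreme rays of $\NE^1(X)$.

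First I would record the root-system combinatorics. Write $\Pic(X) = \bigoplus_{i=1}^{p-1}\BZ H_i \oplus \bigoplus_{j=1}^{q}\BZ E_j$, where the $H_i$ are the pulled-back hyperplane classes of the factors and the $E_j$ are the exceptional divisors, and equip $\Pic(X)\otimes\BR$ with Mukai's symmetric bilinear form, for which $K_X^\perp$ carries simple roots of three kinds: the point-permutation roots $E_j-E_{j+1}$, the factor-permutation roots $H_i-H_{i+1}$, and a distinguished Cremona root of the form $H_1-E_1-\cdots-E_r$. The central trivalent node is $E_r-E_{r+1}$, from which emanate three arms of lengths $r-1$, $q-r-1$ and $p-1$, so that the reflections in these roots generate the Weyl group $W$ of the $T$-shaped diagram $T_{p,r,q-r}$. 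It is a standard fact for $T$-shaped Coxeter systems that $W$ is finite precisely when $\frac1p+\frac1r+\frac1{q-r}>1$; thus the hypothesis \eqref{EqME1} says exactly that $W$ is infinite. One then checks that the orbit $W\cdot[E_1]\subset\Pic(X)$ is infinite, the higher-dimensional analogue of the infinitely many $(-1)$-curves on a plane blown up at $\ge 9$ general points.

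Next I would give these lattice classes geometric meaning: each class in $W\cdot[E_1]$ should be a ``generalized exceptional class,'' the proper transform of an exceptional divisor under the Cremona-type birational maps realizing $W$. The crucial step is to show that for $\Lambda$ in \emph{very general} position every one of these infinitely many classes is represented by a prime divisor $D$ with $h^0(\CO_X(D))=1$ spanning an extreme ray of $\NE^1(X)$. I would establish this by specialization: degenerate the $q$ points to a configuration for which the relevant divisor is visibly irreducible and rigid, and then combine upper semicontinuity of $h^0$ with the fact that irreducibility of any one class fails only on a proper closed sublocus of configurations. Since there are countably many classes one must delete countably many such subloci, which is precisely why ``very general'' rather than merely ``general'' position is required, and why \eqref{EqME1} -- a numerical, not a finiteness, condition -- governs the outcome. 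Granting this, $\NE^1(X)$ has infinitely many extreme rays, is therefore not rational polyhedral, and $\Cox(X)$ is infinitely generated by Hu--Keel.

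The main obstacle is exactly this geometric realization: proving simultaneous effectivity, irreducibility and rigidity ($h^0=1$) of all orbit classes for very general $\Lambda$. This is the technical heart of Mukai's work, and it is what ties the statement to Hilbert's fourteenth problem. Indeed, one may instead identify $\Cox(X)$ with the invariant ring of a linear Nagata-type action of $\G_a^{\,q-r}$ on a polynomial ring, under which \eqref{EqME1} becomes Nagata's numerical criterion for non-finite generation of the invariants; this is Mukai's original route and explains the provenance of the inequality. Given that Hu--Keel is already at our disposal, I would present the Weyl-group/$\NE^1$ argument as the cleaner of the two.
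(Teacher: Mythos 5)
The statement you are proving is quoted by the paper from \cite{Mu2} (Theorem 3) and is not reproved there; the paper's only commentary on its proof is that the key input is \cite{Mu1}, Lemma 3, whose argument shows that the (generalized) exceptional classes span extreme rays of $\NE^1(X)$. Your skeleton coincides with Mukai's strategy as the paper summarizes it: equip $\Pic(X)$ with Mukai's pairing, observe that the Weyl group $W$ of the $T$-shaped diagram $T_{p,r,q-r}$ is infinite exactly when (\ref{EqME1}) holds, and produce infinitely many extremal effective classes from the $W$-orbit of an exceptional class. Your only packaging difference is routing the conclusion through Hu--Keel \cite{H-K}; that detour is harmless but unnecessary, since finite generation of $\Cox(X)$ makes $\NE^1(X)$ rational polyhedral directly (the classes of any finite generating set generate the cone), as the paper itself notes after Definition \ref{DEFN003}.

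There is, however, a genuine gap at what you correctly identify as the heart of the matter. Your proposed mechanism --- degenerate $\Lambda$ to a special configuration where each $w\cdot[E_1]$ is ``visibly irreducible and rigid,'' then use upper semicontinuity of $h^0$ and delete countably many proper closed subloci --- does not work as stated. Semicontinuity makes effectivity ($h^0\ge 1$) a \emph{closed} condition on the parameter space: it specializes but does not generalize, so exhibiting effectivity at a special $\Lambda_0$ says nothing about very general $\Lambda$, and your assertion that ``irreducibility of any one class fails only on a proper closed sublocus'' presupposes effectivity over the whole family, which is precisely what is unproven. The correct mechanism is the one you mention in passing and then abandon: ``very general position'' is \emph{defined} (as in the paper's \S 3.2, via stability under arbitrary finite sequences of Cremona transformations) so that every $w\in W$ is realized geometrically by a composite of Cremona transformations inducing a birational map from $X$ to a blow-up of the same type that is an isomorphism in codimension one. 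Such pseudo-isomorphisms preserve effective classes, $h^0$, and extreme rays (compare \lemref{ME3}, which the paper proves for exactly this kind of transport), so $h^0(w\cdot E_1)=h^0(E_1)=1$ and each orbit class is a prime divisor spanning an extreme ray of $\NE^1(X)$ --- no degeneration argument is needed, and none would suffice. Separately, you assert without proof that the orbit $W\cdot[E_1]$ is infinite when $W$ is; this is true but requires an argument (e.g., via the action on real roots or a stabilizer computation), and it is part of what \cite{Mu1}, Lemma 3 delivers.
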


By ``$n$ points $\{P_1, \cdots, P_n \} \in\mathbb{P}^{r-1}$ in very general position", we mean that any $r$ points of $\{P_1, \cdots, P_n \}$ after any finite sequence of Cremona transformations span $\mathbb{P}^{r-1}$. Here a Cremona transformation is a birational map of the form $\sigma_1 \circ \Psi\circ\sigma_2$, where $\sigma_1, \sigma_2\in\text{Aut}(\mathbb{P}^{r-1})$, and 
 \[\xymatrix{
\Psi: \mathbb{P}^{r-1}\ar@{.>}[r] &\mathbb{P}^{r-1}, & & (x_1,\cdots,x_r)\mapsto (\frac{1}{x_1}, \cdots, \frac{1}{x_r}).
}\]
 By ``$n$ points $\{P_1, \cdots, P_n \} \in (\mathbb{P}^{r-1})^{p-1}$ in very general position", we mean that for $1\leq i\leq p-1$, the $i^{th}$ components $\{P_1^{(i)}, \cdots, P_n^{(i)} \}\in\mathbb{P}^{r-1}$ are in very general position.

Note that a key fact in the proof of \cite[Theorem 3]{Mu2}  is \cite[Lemma 3]{Mu1}, which says that exceptional divisors are indispensable as generators of the Cox ring. We observe that the proof of \cite[Lemma 3]{Mu1} implies that
exceptional divisors generate extremal rays of the effective cone of $X$. Thus the proof of \cite[Theorem 3]{Mu2} implies that the effective cone of $X$ has  infinitely many extremal rays. Therefore, $E^1(X)$ is transcendental by Corollary \ref{Transcendence 1}. 

\begin{cor}\label{ME1}
Let $r>2$ and $X$ be the blow-up of $(\mathbb{P}^{r-1})^{p-1}$ at $q$ points in very general position, where $p, q, r$ satisfy \eqref{EqME1}
and $q>r$.
Then the effective cone of $X$ has  infinitely many extremal rays and  the Euler-Chow series $E^1(X)$ is  transcendental. 
\end{cor}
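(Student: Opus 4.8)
The plan is to derive the statement by combining Mukai's construction with the transcendence criterion of Corollary \ref{Transcendence 1}. The corollary makes two assertions---that $\NE^1(X)$ has infinitely many extreme rays, and that $E^1(X)$ is transcendental---and the second is a formal consequence of the first via Corollary \ref{Transcendence 1}. So the entire content reduces to producing infinitely many effective divisor classes, each generating an extreme ray of $\NE^1(X)$.

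First I would set up the Cremona/Weyl symmetry of $X$. Denoting by $E_1,\dots,E_q$ the exceptional divisors and by $H_1,\dots,H_{p-1}$ the pullbacks of the hyperplane classes of the factors, the permutations of the blown-up points together with the standard Cremona transformations based at $r$-subsets of $\Lambda$ generate a group $W$ acting on $\Pic(X)$ by isometries of the relevant bilinear form. The Coxeter diagram governing $W$ is the $T$-shaped graph $T_{p,r,q-r}$, and Inequality (\ref{EqME1}), namely $\tfrac1p+\tfrac1r+\tfrac1{q-r}\le 1$, is precisely the condition that this diagram fails to be of finite type; equivalently, $W$ is an infinite group. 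For $r=3,\,p=2$ this recovers the familiar statement that $T_{2,3,q-3}$ is of non-finite type once $q\ge 9$.

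The crux is the claim that the $W$-orbit of the class $E_1$ consists of infinitely many distinct classes, each of which is effective and generates an extreme ray of $\NE^1(X)$. For $\Lambda$ very general, the proof of \cite[Lemma 3]{Mu1} shows that each exceptional class $E_j$ spans an extreme ray of $\NE^1(X)$; applying the same argument to the Cremona images of the $E_j$ shows that each class in $W\cdot E_1$ is again an effective generator of an extreme ray. Infinitude of the orbit then follows from infinitude of $W$ together with finiteness of the stabilizer of $E_1$; this is exactly the mechanism that forces $\Cox(X)$ to require infinitely many indecomposable generators in the proof of \cite[Theorem 3]{Mu2}, so that no genuinely new input beyond Mukai's argument is needed here.

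Having produced infinitely many effective divisors spanning distinct extreme rays of $\NE^1(X)$, I would finish by applying Corollary \ref{Transcendence 1} verbatim: taking $A$ to index these extremal classes and, for each $\alpha\in A$, a supporting functional $\delta_\alpha\in\NM^{n-1}(X)$ nonnegative on $\NE^1(X)$ and vanishing exactly along the ray $[\alpha]$, the hypotheses of Proposition \ref{PROP006} are met with $J=M$, yielding the transcendence of $E^1(X)$. The main obstacle is the extremality-and-infinitude claim of the third paragraph: one must genuinely extract from Mukai's argument both that the orbit classes remain extremal effective divisors under the Cremona action and that the orbit is infinite, the latter resting entirely on the arithmetic of Inequality (\ref{EqME1}).
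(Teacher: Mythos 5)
Your proposal is correct and takes essentially the same route as the paper: the paper likewise observes that the proof of \cite[Lemma 3]{Mu1} shows the exceptional classes generate extreme rays of $\NE^1(X)$, that the proof of \cite[Theorem 3]{Mu2} (the infinite Cremona/Weyl action governed by Inequality (\ref{EqME1})) then yields infinitely many extreme rays, and concludes by Corollary \ref{Transcendence 1}. One minor caveat: your appeal to \emph{finiteness} of the stabilizer of $E_1$ is not literally correct in general (e.g.\ for $p=2$, $r=3$, $q=10$ the stabilizer of an exceptional class contains the infinite Weyl group of $T_{2,3,6}$), but this is harmless since the infinitude of the orbit is established directly in Mukai's proof, to which both you and the paper defer.
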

 
With Corollary \ref{Transcendence 1} and Proposition \ref{ME5}, Corollary \ref{ME1} can be generalized in two directions as follows. 
\begin{thmm}[Theorem \ref{THM001}] %\label{ME2}
For every  pair of integers $p>1$ and $r>2 $, let $q_0(r,p)$ be the minimal positive integer $q$ greater than $r$ and satisfying \eqref{EqME1}.
Then $E^1(X)$ is transcendental in the following cases:
\begin{enumerate}
\item $X$ is the blow-up of $(\mathbb{P}^{r-1})^{p-1}$ at $\Lambda$, where $r>2$, $p>1$, $\Lambda$  is a finite set of points in $(\mathbb{P}^{r-1})^{p-1}$ and contains $q_0(r,p)$ points  in very general position.

\item $X$ is the blow-up of the product $\mathbb{P}^{r_1-1}\times\cdots\times\mathbb{P}^{r_{p-1}-1}$ at a finite set $\Lambda$, where $p>1$, $\Lambda$ lies on a  linear subspace $(\mathbb{P}^{r_0-1})^{p-1}$ with $2<r_0\leq \text{min}_{i=1}^{p-1}(r_i)$ and contains  $ q_0(r_0,p) $   points in very general position  as points of $(\mathbb{P}^{r_0-1})^{p-1}$. 
\end{enumerate}
\end{thmm}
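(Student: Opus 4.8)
\emph{Overview and Case (1).} The plan is to deduce both cases from \coref{ME1} by transporting transcendence along two kinds of maps already analyzed in the paper. For Case (1) I would single out a subset $\Lambda_0\subseteq\Lambda$ of exactly $q_0(r,p)$ points in very general position and set $X_0=\Bl_{\Lambda_0}((\PP^{r-1})^{p-1})$. By the definition of $q_0(r,p)$ the value $q=q_0(r,p)$ satisfies Inequality \eqref{EqME1}, so \coref{ME1} applies to $X_0$ and shows that $\NE^1(X_0)$ has infinitely many extreme rays and that $E^1(X_0)$ is transcendental. Since $\Lambda\supseteq\Lambda_0$, the blow-up $X=\Bl_\Lambda((\PP^{r-1})^{p-1})$ factors through $X_0$ via a birational regular morphism $\pi\colon X\to X_0$ obtained by blowing up the remaining points of $\Lambda\setminus\Lambda_0$. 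The ``moreover'' clause of \coref{Transcendence 1} then gives immediately that $E^1(X)$ is transcendental, finishing Case (1) with no new computation.

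\emph{Case (2).} Here I would reduce to Case (1) by peeling off one hyperplane at a time. Writing $Y_0=\PP^{r_1-1}\times\cdots\times\PP^{r_{p-1}-1}$, I would fix a chain of linear subspaces
\[
Y_0\supset Y_1\supset\cdots\supset Y_N=(\PP^{r_0-1})^{p-1}
\]
in which each $Y_{k+1}$ is the pull-back in $Y_k$ of a hyperplane in one projective factor (lowering that factor's dimension by one), chosen so that $\Lambda\subset Y_N$. Put $X_k=\Bl_\Lambda(Y_k)$. Because $\Lambda\subset Y_{k+1}$, the proper transform of $Y_{k+1}$ inside $X_k$ is exactly $X_{k+1}=\Bl_\Lambda(Y_{k+1})$, and since $r_0\geq 3$ every factor occurring along the chain has dimension at least $r_0-1\geq 2$, so the hypotheses of \propref{ME5} hold at each stage. \propref{ME5} then gives, under its natural Picard isomorphism, the identity $(1-t^{\,\widehat{Y_{k+1}}})\,E^1(X_k)=E^1(X_{k+1})$. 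As $\widehat{Y_{k+1}}$ is a nonzero effective divisor, $1-t^{\,\widehat{Y_{k+1}}}$ is a unit in the completed monoid ring, so by the equivalence recorded just before \coref{PROP203} the series $E^1(X_k)$ is transcendental if and only if $E^1(X_{k+1})$ is. Iterating, $E^1(X)=E^1(X_0)$ is transcendental if and only if $E^1(X_N)$ is; and $X_N=\Bl_\Lambda((\PP^{r_0-1})^{p-1})$ with $\Lambda$ containing $q_0(r_0,p)$ points in very general position, so $E^1(X_N)$ is transcendental by Case (1) applied with $r=r_0$.

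\emph{Main obstacle.} The conceptual input is entirely supplied by \coref{ME1}, \coref{Transcendence 1}, and \propref{ME5}; I expect the only real work to be the bookkeeping in Case (2). Concretely, one must check that the proper transform of $Y_{k+1}$ in $X_k$ really is $\Bl_\Lambda(Y_{k+1})$ and that the Picard isomorphisms $\Pic(X_k)\cong\Pic(X_{k+1})$ furnished by \propref{ME5} compose compatibly, so that transcendence (an intrinsic property of the monoid ring) is transported unchanged from $X_N$ back to $X_0$. As an alternative to the domination clause in Case (1), one could instead propagate extreme rays through the point blow-ups using \lemref{ME3} and then invoke \coref{Transcendence 1} directly.
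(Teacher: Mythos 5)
Your proposal is correct and follows essentially the same route as the paper's own (very terse) proof: the paper deduces Case (1) directly from Corollary \ref{ME1} together with the domination clause of Corollary \ref{Transcendence 1}, and Case (2) from Case (1) by applying Proposition \ref{ME5} inductively on dimension to the proper transforms of a chain of codimension-one linear subspaces descending to $(\mathbb{P}^{r_0-1})^{p-1}$. The only difference is that you make explicit the bookkeeping the paper leaves implicit --- the chain $Y_0\supset\cdots\supset Y_N$, the identification of each proper transform with $\Bl_\Lambda(Y_{k+1})$, the invertibility of $1-t^{\widehat{Y_{k+1}}}$, and the compatibility of the Picard isomorphisms --- which is a faithful elaboration rather than a different argument.
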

 
\begin{proof} 
Case (1) is a direct consequence of Corollaries \ref{ME1} and \ref{Transcendence 1}. 
Case (2) follows from Case (1) and  Proposition \ref{ME5}. Note that Proposition \ref{ME5} can be applied inductively on dimension to the proper transform of every linear subspace containing $(\mathbb{P}^{r_0-1})^{p-1}$. 
\end{proof}

\subsection{Elliptic fibration}

The purpose of this subsection is to prove that $E^1(X)$ is transcendental for some elliptic fibration.
\begin{thmm}[Theorem \ref{THM002}]  %\label{Elliptic}
 $E^1(X)$ is transcendental in the following cases:
\begin{enumerate}
\item $X$ is the blow-up of $\mathbb{P}^2$ at a finite set $\Lambda$,
where $\Lambda$  contains  the intersection of two very general cubic curves.

\item $X$ is the blow-up of $\mathbb{P}^3$ at a finite set $\Lambda$,
where $\Lambda$  contains  the intersection of three very general quadrics.

\item $X$ is the blow-up of  $\mathbb{P}^r$ at a finite set $\Lambda$,
where $\Lambda$ lies on a linear subspace $\mathbb{P}^2\subset \mathbb{P}^r$
and contains the intersection of two very general cubics.

\item $X$ is the blow-up of  $\mathbb{P}^r$ at a finite set $\Lambda$,
where $\Lambda$ lies on a linear subspace $\mathbb{P}^3\subset \mathbb{P}^r$
and contains the intersection of three very general quadrics.
\end{enumerate}
\end{thmm}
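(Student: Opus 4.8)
The plan is to prove all four cases of \thmref{THM002} by exhibiting an effective divisor $F$ whose associated monoid $L_F$ has infinitely many extreme rays, and then invoking \coref{PROP203}. The unifying geometric input is that the base locus of a general pencil (or net) of the indicated degree defines an elliptic fibration on the blow-up, and the infinitely many sections/multisections of an elliptic fibration produce infinitely many extreme effective classes.

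For Case (1), let $X = \Bl_\Lambda \PP^2$ where $\Lambda$ contains the nine points $q_1,\dots,q_9$ cut out by two general cubics $C_1, C_2$. The pencil $\langle C_1, C_2\rangle$ has these nine points as base locus, so on $Y = \Bl_{\{q_1,\dots,q_9\}}\PP^2$ the proper transforms give a genus-one fibration $Y\to\PP^1$ with general fiber $\widehat{C}$ an elliptic curve and with the nine exceptional divisors as sections. The group law on the generic fiber, together with the action of the Mordell-Weil group on sections, produces infinitely many $(-1)$-curves, each spanning an extreme ray of $\NE^1(Y)$. First I would identify a suitable $F$ --- taking $F = \widehat{C}$ (the anticanonical class $-K_Y$, which is the class of the fiber) --- and argue that the classes of these infinitely many sections lie in $L_F$, because each meets a general fiber in one point and hence meets $F$ properly, so $h^0(D) > h^0(D-F)$. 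Since these section classes are distinct extreme rays of $\overline{\Conv(L_F)}$, \coref{PROP203} gives transcendence of $E^1(Y)$, and the second sentence of that corollary propagates transcendence to $X = \Bl_\Lambda\PP^2$ dominating $Y$ when $\Lambda\supsetneq\{q_1,\dots,q_9\}$. Case (2) is parallel: the base locus of a general net of quadrics in $\PP^3$ is eight points, and the blow-up carries an elliptic fibration (via the two-dimensional linear system) whose sections again generate infinitely many extreme rays; I would take $F$ to be the appropriate fiber class.

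Cases (3) and (4) reduce to (1) and (2) by the linear-subspace mechanism. Here $\Lambda$ lies on a plane $\PP^2$ (resp.\ a $\PP^3$) inside $\PP^r$, and I would apply \propref{ME5} --- or rather the general principle behind \coref{PROP203} combined with the proper transform $\widehat{P}$ of that linear subspace --- to transfer the effective-cone structure of the blow-up of the small linear space to the ambient blow-up. Concretely, the identification $L_{\widehat P}\cong\NE^1(\widehat P)$ furnished by the restriction argument shows that the infinitely many extreme rays present on the blow-up of $\PP^2$ (resp.\ $\PP^3$) reappear as extreme rays of $\overline{\Conv(L_{\widehat P})}$ on $X$, so that again \coref{PROP203} applies. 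Thus the heart of the matter in all four cases is Case (1).

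The main obstacle will be establishing that the elliptic fibration genuinely produces \emph{infinitely many} distinct extreme rays of $\NE^1$ (equivalently, infinitely many $(-1)$-curves), and that these classes actually lie in $L_F$ rather than being absorbed into a fixed part. This is where generality of the points enters decisively: for nine general points the Mordell-Weil rank is positive (indeed rank $8$), so the sections and their translates are infinite in number and pairwise non-proportional, whereas for special configurations the fibration could degenerate or the Cox ring could become finitely generated. I would justify extremality either by the standard fact that an irreducible curve $D$ with $D^2 < 0$ spans an extreme ray, or by exhibiting for each such section a supporting nef class $\delta$ vanishing exactly on its ray --- the hypothesis needed to invoke \propref{PROP006} through \coref{PROP203}. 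Verifying that the relevant $a_D = h^0(D)-h^0(D-F)$ are nonzero for the chosen $F$ is the remaining routine check, handled exactly as in the derivation of \coref{PROP203} following equation \eqref{E074}.
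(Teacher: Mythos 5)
Your overall architecture coincides with the paper's: elliptic fibrations plus translation by Mordell--Weil classes to manufacture infinitely many classes in $L_F$, then \coref{PROP203} for cases (1)--(2), and \propref{ME5} for cases (3)--(4). Your case (1) is essentially the paper's argument (the paper runs the fibration argument directly on $\Bl_\Lambda\PP^2$, where the fibration persists after blowing up the extra points of $\Lambda$, while you work on the nine-point blow-up and propagate via the ``moreover'' clause of \coref{PROP203}; both are legitimate). But there is a genuine gap in your treatment of the higher-dimensional cases. In case (2) the fibers of $f\colon X\to\PP^2$ are curves, so ``the appropriate fiber class'' is not a divisor; the correct choice is $F=\widehat{Q}=f^{-1}(\Gamma)$, the proper transform of a general member of the net (\propref{PROP202}). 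More seriously, neither of your two offered extremality justifications functions on the threefold: the criterion ``irreducible with $D^2<0$ spans an extreme ray'' is surface-specific, and you never produce the supporting nef classes $\delta$ for the divisors $G_a=\phi_a(E_1)$. One cannot expect to argue inside $\NE^1$ at all here---the paper emphasizes (via Hassett--Tschinkel) that in the closely related case (3) with $r=3$ the cone $\NE^1(X)$ is rational polyhedral, so the infinitude of extreme rays must be found in the subcone $\overline{\Conv(L_F)}$, not in the effective cone. The missing idea, which is the heart of \propref{PROP202}, is to restrict along the injection $\Pic(X)\hookrightarrow\Pic(\widehat{Q})$: the restrictions $G_a\cdot\widehat{Q}$ are $(-1)$-curves on the rational elliptic surface $\widehat{Q}$, hence extreme in $\NE^1(\widehat{Q})$, and since $L_{\widehat{Q}}$ maps injectively into the effective monoid of $\widehat{Q}$, the classes $G_a$ are extreme in $\overline{\Conv(L_{\widehat{Q}})}$, which is what \coref{PROP203} actually needs.

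The same issue infects your reduction of case (4). The identification $L_{\widehat{P}}\cong\NE^1(\widehat{P})$ with $\widehat{P}=\Bl_\Lambda\PP^3$ would let extreme rays ``reappear'' only if $\NE^1(\widehat{P})$ itself had infinitely many extreme rays---but that is precisely what the proof of case (2) does \emph{not} establish (it establishes them for the smaller cone $\overline{\Conv(L_{\widehat{Q}})}$, where $\widehat{Q}\subset\widehat{P}$ has codimension two in $X$ and so cannot serve as the divisor $F$ on $X$). The correct transfer, and the paper's route, is the series identity $(1-t^{\widehat{P}})E^1(X)=E^1(\widehat{P})$ of \propref{ME5}, see \eqref{E351}, applied inductively along a flag of linear subspaces $\PP^3\subset\PP^4\subset\cdots\subset\PP^r$ when the codimension exceeds one: transcendence of $E^1(\widehat{P})$ (case (2)) then forces transcendence of $E^1(X)$, since multiplication by the polynomial $1-t^{\widehat{P}}$ preserves algebraicity. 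Your case (3) phrasing happens to survive because there $\widehat{P}$ is a surface and case (1) really does give infinitely many extreme rays of $\NE^1(\widehat{P})$, but you should use the identity \eqref{E351} uniformly; as written, cases (2) and (4) of your proposal do not close.
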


\begin{proof}
With Proposition \ref{ME5}., Case (3) and  (4) follows from Case (1) and (2) respectively.
The proof of Case (1) and (2)  makes use of the facts that $X$ is an elliptic fibration over $\mathbb{P}^1$ or $\mathbb{P}^2$. 

Case (2) is a consequence of Corollary \ref{PROP203} by setting $F = \widehat{Q}$ and Proposition \ref{PROP202},
the latter showing that there are infinitely many $(-1)$-curves on
the proper transform $\widehat{Q}$ of
a general member of the net of quadrics. Case (1) follows from a similar proof as Proposition \ref{PROP202}.
\end{proof}

\begin{prop}\label{PROP202}
Let $X$ be the blow up of $\PP^3$ at the base locus $\Lambda$ of a very general net of quadrics
in $\PP^3$ and let $\widehat{Q}$ be the proper transform of a general member of the net. Then the image of 
\begin{equation}\label{E239}
L_{\widehat{Q}} = \{ D\in \Pic(X): h^0(X, D) > h^0(X, D - \widehat{Q})\},
\end{equation}
 under the injection
$\Pic(X)\hookrightarrow \Pic(\widehat{Q})$ as a submonoid of $ \Pic(\widehat{Q})$
contains infinitely many $(-1)$-curves on $\widehat{Q}$.
\end{prop}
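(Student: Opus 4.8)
The plan is to realize $\widehat{Q}$ as a rational elliptic surface and to locate the desired $(-1)$-curves among its sections. First I would restrict the net of quadrics to a general member $Q$ of the net. A general such $Q$ is a smooth quadric, so $Q\cong\PP^1\times\PP^1$, and the remaining quadrics of the net cut out on $Q$ a pencil of $(2,2)$-curves whose base locus is exactly $\Lambda$, a set of $8$ points in general position on $Q$. Hence the proper transform $\widehat{Q}$ is the blow-up of $\PP^1\times\PP^1$ at these $8$ points, and the pencil induces an elliptic fibration $\widehat{Q}\to\PP^1$, namely the restriction of the net map $X\dashrightarrow\PP^2$ to $\widehat{Q}$ (whose image is the line of quadrics in the pencil through $Q$). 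Writing $h=i^*H$ for the hyperplane class of $Q$ and $e_1,\dots,e_8$ for the exceptional curves, one has $h^2=2$ and the fiber class is $F=i^*\widehat{Q}=2h-\sum_i e_i=-K_{\widehat{Q}}$; thus $\widehat{Q}$ is a rational elliptic surface with anticanonical fibration.

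For $\Lambda$ general the eight points are general on $Q$, so $\widehat{Q}$ is a generic rational elliptic surface: it carries no $(-2)$-curves, all its fibers are irreducible, and its Mordell--Weil lattice is $E_8$. Consequently every class $D'$ on $\widehat{Q}$ with $(D')^2=-1$ and $D'\cdot F=1$ is the class of a unique irreducible $(-1)$-curve, a section of the fibration, and there are infinitely many such classes. This is the source of the infinitely many $(-1)$-curves.

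It remains to produce infinitely many of these sections whose classes lie in the image of the injection $i^*\colon\Pic(X)\hookrightarrow\Pic(\widehat{Q})$, that is, in the sublattice generated by $h,e_1,\dots,e_8$, and then to check that the corresponding classes of $X$ lie in $L_{\widehat{Q}}$. A class $D=mH-\sum_i n_iE_i\in\Pic(X)$ restricts to $i^*D=mh-\sum_i n_ie_i$, with $(i^*D)^2=2m^2-\sum_i n_i^2$ and $(i^*D)\cdot F=4m-\sum_i n_i$; so the section condition becomes the pair of Diophantine equations $\sum_i n_i=4m-1$ and $\sum_i n_i^2=2m^2+1$. These admit infinitely many integer solutions: taking $n_2=\dots=n_8$ reduces the system to a square condition on $m$ satisfied along an infinite arithmetic family, and by the previous paragraph each solution is realized by an irreducible $(-1)$-curve $C\subset\widehat{Q}$ with $[C]=i^*D$. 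To see finally that $D\in L_{\widehat{Q}}$ I would use the exact sequence $0\to\mathcal{O}_X(D-\widehat{Q})\to\mathcal{O}_X(D)\to\mathcal{O}_{\widehat{Q}}(i^*D)\to 0$: since $C$ is a $(-1)$-curve we have $h^0(\widehat{Q},i^*D)=1$, so it suffices that the restriction map be nonzero, for which $H^1(X,D-\widehat{Q})=0$ is enough. Writing $D-\widehat{Q}=(m-2)H-\sum_i(n_i-1)E_i$, one computes $D-\widehat{Q}-K_X=(m+2)H-\sum_i(n_i+1)E_i$, which is nef and big for these numerical values (here each $n_i$ is of size roughly $m/2$), so Kawamata--Viehweg vanishing yields $H^1(X,D-\widehat{Q})=0$ and hence $h^0(X,D)>h^0(X,D-\widehat{Q})$.

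The main obstacles are twofold. First, verifying that for the general configuration $\widehat{Q}$ is genuinely a \emph{generic} rational elliptic surface, so that every numerical section class is in fact an irreducible $(-1)$-curve; second, the lifting step $D\in L_{\widehat{Q}}$, i.e. the vanishing $H^1(X,D-\widehat{Q})=0$ guaranteeing that the $(-1)$-curve on $\widehat{Q}$ is cut out on $Q$ by a degree-$m$ hypersurface of $\PP^3$. Once Proposition \ref{PROP202} is established, these $(-1)$-curves generate infinitely many distinct extreme rays of $\overline{\Conv(L_{\widehat{Q}})}$, each being a negative curve on the surface $\widehat{Q}$, so that Corollary \ref{PROP203} applies with $F=\widehat{Q}$ and shows $E^1(X)$ transcendental, which is Case (2) of Theorem \ref{THM002}.
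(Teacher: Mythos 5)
Your setup is sound and consistent with the paper's: $\widehat{Q}$ is indeed a rational elliptic surface (though note the eight points are \emph{not} general on $Q\cong\PP^1\times\PP^1$ — they are the base locus of the pencil cut out by the net, a Cayley octad, which is precisely why $|-K_{\widehat{Q}}|$ is a pencil), for a general net all fibers are irreducible, every class $D'$ in the sublattice $\langle h,e_1,\dots,e_8\rangle=i^*\Pic(X)$ with $(D')^2=-1$ and $D'\cdot F=1$ is an irreducible section, and your Diophantine system does have infinitely many solutions. The genuine gap is the lifting step $D\in L_{\widehat{Q}}$, and your proposed fix for it fails: the class $D-\widehat{Q}-K_X=(m+2)H-\sum_i(n_i+1)E_i$ is \emph{not} nef and big in the regime your equations force. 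Concretely, $(m;n_1,\dots,n_8)=(7;6,3,3,3,3,3,3,3)$ solves your system ($\sum n_i=27=4m-1$, $\sum n_i^2=99=2m^2+1$), and then $D-\widehat{Q}-K_X=9H-7E_1-4(E_2+\cdots+E_8)$ meets the proper transform of the line through $p_1,p_2$ (class $\ell-f_1-f_2$) in $9-7-4=-2<0$, so it is not nef; its top self-intersection is $9^3-7^3-7\cdot 4^3=-62<0$, so it is not even big. The failure is systematic, not an accident of this solution: since $\sum n_i=4m-1$, the two largest multiplicities satisfy $n_i+n_j\ge m$, so the intersection $m-n_i-n_j$ with the line through those two points is never positive. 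Kawamata--Viehweg therefore cannot be invoked, and you are left with no argument that the restriction map $H^0(X,D)\to H^0(\widehat{Q},i^*D)$ is nonzero — which, as you yourself flag, is the crux (without it you do not even know $D$ is effective on $X$).

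The paper sidesteps this entirely by producing effective divisors rather than cohomology classes. The map $f:X\to\PP^2$ defined by the net is an elliptic fibration with sections $E_1,\dots,E_8$, and translation by an element $a$ of the subgroup $A\subset J(X_\eta)$ generated by $H,E_1,\dots,E_8$ on the generic fiber defines a birational self-map $\phi_a:X\dashrightarrow X$ preserving $f$, which is an isomorphism in codimension one away from the finitely many reducible fibers and hence restricts to an isomorphism of $\widehat{Q}=f^{-1}(\Gamma)$ for a general line $\Gamma$. The divisor $G_a=\phi_a(E_1)$ is then an honest irreducible effective divisor meeting $\widehat{Q}$ properly, so $G_a\in L_{\widehat{Q}}$ by the very definition of $L_{\widehat{Q}}$ — no vanishing theorem needed — and $G_a\cdot\widehat{Q}=\phi_a(E_1\cap\widehat{Q})$ is a $(-1)$-curve; the orbit $\{G_a:a\in A\}$ is infinite. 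To salvage your argument, replace the Kawamata--Viehweg step by this Mordell--Weil translation on the threefold: effectivity of the section classes and their membership in $L_{\widehat{Q}}$ is exactly what the geometric construction delivers and what your cohomological approach cannot.
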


\begin{proof}
Let $H, E_1, E_2, ..., E_8$ be the generators of $\Pic(X)$, where $H$ is the pullback of the
hyperplane divisor and $E_1, E_2, ..., E_8$ are the exceptional divisors of the blow-up $X\to \PP^3$.

The net of quadrics gives a rational map $\PP^3\dashrightarrow \PP^2$ with $\Lambda$
the indeterminacy locus. Blowing up $\Lambda$ gives a regular map $f: X\to \PP^2$, which is
a fibration of elliptic curves with sections $E_1, E_2, ..., E_8$. Each fiber of $f$ is the
proper transform of the
intersection of two quadrics of the net and the pull back $f^{-1}(\Gamma)$ of a line
$\Gamma\subset \PP^2$ is the proper transform of a quadric of the net. So 
$\widehat{Q} = f^{-1}(\Gamma)$ for a general line $\Gamma\subset\PP^2$.

Let $X_\eta$ be the generic fiber of
$f: X\to \PP^2$, $J(X_\eta) = \Pic_0(X_\eta)$ be the Jacobian of $X_\eta$ and $A\subset J(X_\eta)$
be the intersection of $J(X_\eta)$ with the subgroup generated by $H, E_1, E_2, ..., E_8$. For each $a \in A$, we have
an automorphism $\phi_a: X_\eta\to X_\eta$ by taking $p$ to $p + a$; $\phi_a$ corresponds to a
birational self map $\phi_a: X\dashrightarrow X$. More explicitly, for each
$a = dH + m_1E_1 + m_2E_2 +
... + m_8 E_8$ satisfying $4d + m_1 + m_2 + ... + m_8 = 0$,
$\phi_a: X\dashrightarrow X$ is a birational map sending $p\in X_b$ to $p+a\in X_b$
on a general fiber $X_b$ of $f$. Obviously, $\phi_a$ preserves the fiberation $X/\PP^2$, i.e.,
$f\circ \phi_a = f$.

This map can be extended to all irreducible fibers of $f$ since
the map $p\to p+a$ is well defined on a rational curve with one node whose Picard
group is $G_m$ and a rational curve with one cusp whose Picard group is $G_a$.
And since $f$ has only finitely many
reducible fibers, $\phi_a$ is an isomorphism 
\begin{equation}\label{E240}
\phi_a: X\backslash f^{-1}(\Delta)\xrightarrow{\sim} X\backslash f^{-1}(\Delta)
\end{equation}
in codimension one, where $\Delta\subset\PP^2$ is the finite set of points $b$
with reducible fiber $X_b$. In particular, $\phi_a$ induces an isomorphism
$\phi_a: \widehat{Q}\xrightarrow{\sim} \widehat{Q}$.

For each $a\in A$, $G_a = \phi_a(E_1)$ is a rational section of $f$. And since $\phi_a$ is
an isomorphism on $\widehat{Q}$, $G_a\cdot \widehat{Q}$ is a $(-1)$-curve on $\widehat{Q}$.
Clearly, $G_a$ meets $\widehat{Q}$ properly and hence $G_a\in L_{\widehat{Q}}$. And the orbit
$\{G_a: a\in A\}\subset \Pic(X)$ of $E_1$ under the action of $A$ is obviously infinite. We are done.
\end{proof}

\section{Euler-Chow Series of Del Pezzo surfaces}
\label{SECDPS}

\subsection{Some basic facts on $\Bl_\Lambda \PP^2$}

In the computation of Euler-Chow series of Del Pezzo surfaces, we will need some statements on divisors of the surfaces to be discussed as follows. In this subsection, we assume that $X$ is the blow-up of $\PP^2$ at $\Lambda$
with $\Lambda$ being either $r\le 9$ points in general position (very general
position if $r=9$) or
the intersection of two very general cubics.  
Note that  $-K_X$ is nef and effective. If $r\leq 8$, then $X$ is a Del Pezzo surface and  $-K_X$ is ample. 

\begin{lem}\label{LEM004}
For a non-zero effective divisor $D$, $-K_X D > 0$ unless $r = 9$ and $D = -d K_X$ for some $d> 0$.
For an integral curve $D\subset X$, $D^2 \ge -1$ and
\begin{itemize}
\item $D$ is nef if $D^2 \ge 0$;
\item $D$ is a $(-1)$-curve if $D^2 = -1$.
\end{itemize}
\end{lem}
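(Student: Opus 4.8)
The plan is to reduce everything to the adjunction formula together with the positivity properties of $-K_X$, treating the del Pezzo range $r\le 8$ and the elliptic case $r=9$ separately. Write $\Pic(X) = \BZ H \oplus \BZ E_1 \oplus \cdots \oplus \BZ E_r$ with $H^2 = 1$, $E_i^2 = -1$ and all other products zero, so that $K_X = -3H + \sum_i E_i$ and $K_X^2 = 9 - r$. The two geometric inputs I would record at the outset are: (i) for $r \le 8$ and $\Lambda$ general, $X$ is a del Pezzo surface, so $-K_X$ is ample; (ii) for $r = 9$, the anticanonical class is the fibre class of the elliptic fibration $f : X \to \PP^1$ obtained by resolving the pencil of cubics through $\Lambda$, so $-K_X$ is nef with $(-K_X)^2 = 0$, and for $\Lambda$ general every fibre of $f$ is irreducible (a general pencil of plane cubics meets the codimension-two locus of reducible cubics in $\PP^9 = |\CO_{\PP^2}(3)|$ trivially).

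First I would prove the statement about $-K_X\cdot D$. For $r\le 8$, ampleness of $-K_X$ gives $-K_X\cdot D > 0$ for every nonzero effective $D$, with no exception. For $r = 9$, nefness of $-K_X$ gives $-K_X\cdot D \ge 0$, and I must identify when equality holds. Writing $D = \sum_i a_i C_i$ with $C_i$ integral and $a_i > 0$, nefness forces $-K_X\cdot C_i = 0$ for each $i$; such a $C_i$ is contracted by $f$, hence, all fibres being irreducible, is itself a fibre and satisfies $C_i \sim -K_X$. Therefore $D \sim -dK_X$ with $d = \sum_i a_i$, which is exactly the asserted exceptional form.

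With this in hand the remaining assertions follow from adjunction, $p_a(D) = 1 + \tfrac12(D^2 + K_X\cdot D) \ge 0$ for an integral curve $D$, i.e. $D^2 \ge -2 + (-K_X\cdot D)$. If $D$ is not of the exceptional form, then $-K_X\cdot D \ge 1$ (a positive integer), giving $D^2 \ge -1$; and in the exceptional case $D = -dK_X$ being an integral curve forces $d = 1$, whence $D^2 = (-K_X)^2 = 0 \ge -1$. For nefness when $D^2 \ge 0$: any integral curve $C \ne D$ shares no component with $D$, so $D\cdot C \ge 0$, while $D\cdot D = D^2 \ge 0$, so $D$ meets every effective curve non-negatively and is nef. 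Finally, if $D^2 = -1$ then $D$ is not of the exceptional form (whose square is $0$), so $-K_X\cdot D \ge 1$; combined with the adjunction bound $-K_X\cdot D \le 1$ coming from $p_a(D) \ge 0$, this yields $-K_X\cdot D = 1$ and $p_a(D) = 0$, so $D$ is a smooth rational curve of self-intersection $-1$, i.e. a $(-1)$-curve.

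The main obstacle is the $r = 9$ analysis underlying the first assertion: everything downstream depends on knowing that the only integral curves orthogonal to $-K_X$ are the irreducible anticanonical fibres. The substantive point is therefore the irreducibility of all fibres of $f$ for general $\Lambda$, which I would justify by the dimension count for pencils of cubics sketched above; once the exceptional class $-dK_X$ is pinned down, the bounds $D^2\ge -1$, the nef criterion, and the $(-1)$-curve characterization are formal consequences of adjunction and the elementary fact that distinct integral curves intersect non-negatively.
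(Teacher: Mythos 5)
Your reduction to adjunction in the second half is sound and matches the paper's: from $(K_X+D)D = 2p_a(D)-2$ and $K_XD\le 0$ you get $D^2\ge -1$ once the exceptional classes are pinned down, nefness for $D^2\ge 0$ is the elementary fact that distinct integral curves meet non-negatively, and the $D^2=-1$ case forces $p_a(D)=0$ and $-K_XD=1$ exactly as in the paper. The problem is your key geometric input for $r=9$. The lemma is asserted for \emph{both} admissible configurations $\Lambda$: nine points in general position, and the base locus of a pencil of two general cubics. Your elliptic fibration $f:X\to\PP^1$ exists only in the second case. Through nine points in general position there passes a \emph{unique} cubic, not a pencil: the nine points impose independent conditions on $|\CO_{\PP^2}(3)|\cong\PP^9$, so $h^0(X,-K_X)=1$ (the paper uses precisely this dichotomy again in Lemma \ref{LEM003}, where $h^0(-dK_X)=1$ for nine general points). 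Hence for $\Lambda$ general there is no fibration, and your step ``such a $C_i$ is contracted by $f$, hence is itself a fibre'' has nothing to run on, even though the exceptional classes $D=-dK_X$ (namely $d$ copies of the unique anticanonical curve) are still effective and still satisfy $-K_XD=0$, so the first assertion of the lemma is genuinely nontrivial in this sub-case.

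The paper closes this gap differently: it restricts to the smooth elliptic curve $C\in|-K_X|$ (which exists in both sub-cases), replaces $D$ by $D-\lambda C$ so that $D$ meets $C$ properly, and observes that $D\cdot C=-K_XD=0$ then forces $\CO_X(D)|_C\cong\CO_C$, i.e.\ the relation $3dH-\sum_{i=1}^9 m_iP_i=0$ in $\Pic(C)$ with $P_i=C\cap E_i$. For nine general points the classes $H,P_1,\dots,P_9$ satisfy no relation in $\Pic(C)$, so no such $D$ exists; for the base locus of a general pencil the only relation is $3H-\sum P_i=0$, which forces $D=-dK_X$. To repair your proof you would either have to import this $\Pic(C)$ argument for the general-position case, or restrict the lemma's scope. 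In the pencil case your fibration argument is a correct alternative to the paper's (your dimension count that a general line in $\PP^9$ misses the codimension-two locus of reducible cubics is right, though you should also note the general pencil has no multiple members, so that an integral curve contained in a fibre is the full fibre class); it is arguably more geometric there, but it cannot carry the case the paper's argument was designed for.
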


\begin{proof}
When $r\le 8$, $-K_X$ is ample, thus  $-K_X D > 0$. 

When $r=9$, assume that $D = dH - \sum_{i=1}^9 m_i E_i$ and $C$ is a general member of $|-K_X|$. Note that $C$ is a smooth elliptic
curve under our assumptions on $\Lambda$. Suppose that $K_X D = 0$
and $D$ is not a multiple of $C$. Replacing $D$ by $D - \lambda C$, 
we may assume that $D$ meets $C$ properly.
Restricting $D$ to $C$, let $P_i = C\cap E_i$, then
\begin{equation}\label{E234}
\CO_X(D)|_C = d H - \sum_{i=1}^9 m_i P_i = 0 \text{ in }\Pic(C).
\end{equation}
When $\Lambda$ is a set of $9$ points in very general position,
$P_1, P_2, ..., P_9$ are $9$ very general points on $C$; therefore, there are no relations between them
and $H$ in $\Pic(C)$ and \eqref{E234} cannot happen. When $\Lambda$ is the intersection of two very general
cubics, the only relation between $P_1, P_2, ..., P_9$ and $H$ is
\begin{equation}\label{E233}
3H - \sum_{i=1}^9 P_i = 0
\end{equation}
in $\Pic(C)$. That is, $D = -(d/3) K_X$. Contradiction.

Let $D$ be an integral curve.
Since $(K_X+D)D = 2p_a(D) -2 \ge -2$ and $K_X D \le 0$, we conclude that $D^2 \ge -2$, where
$p_a(D)$ is the arithmetic genus of $D$. And $D^2 = -2$ only if $K_X D = 0$, which can only happen
when $r = 9$ and $D = -d K_X$; but then we have $D^2 = 0$.
 If $D^2 \ge 0$, obviously $D$ is nef; if $D^2 = -1$, then $p_a(D) = 0$ and $D$ is a $(-1)$-curve.
\end{proof}

\begin{lem}\label{LEM003}
Let $D$ be a nonzero nef divisor on $X$. Then
\begin{itemize}
\item
$D$ is effective. 
\item
$H^1(D) = 0$ and
\begin{equation}\label{E001}
h^0(D) = \frac{(D - K_X) D}{2} + 1,
\end{equation}
unless $\Lambda$ is the intersection of two general cubic curves and $D = -dK_X$ for some $d >0$.
\item
$h^0(D) > 1$ unless $\Lambda$ is a set of $9$ points in general
position and $D = - dK_X$ for some $d > 0$.
\end{itemize}
\end{lem}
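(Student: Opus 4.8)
The plan is to reduce all three assertions to Riemann--Roch together with the vanishing of the higher cohomology of $D$. Since $X$ is a rational surface, $\chi(\CO_X) = 1$, so Riemann--Roch reads
\[
h^0(D) - h^1(D) + h^2(D) = \chi(D) = \frac{(D - K_X)\cdot D}{2} + 1 .
\]
First I would dispose of $H^2$: by Serre duality $h^2(D) = h^0(K_X - D)$, and if $K_X - D$ were effective, say $K_X - D \sim F \ge 0$, then $F + D + (-K_X) \sim 0$; intersecting with an ample class $A$ gives $0 = F\cdot A + D\cdot A + (-K_X)\cdot A$, which is impossible because $F\cdot A \ge 0$, $D\cdot A\ge 0$ ($D$ nef) and $(-K_X)\cdot A > 0$ (as $-K_X$ is a nonzero nef class). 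Hence $h^2(D) = 0$ for every nonzero nef $D$, and it remains to control $h^1(D)$.

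For the generic situation I would invoke Kawamata--Viehweg vanishing. The class $D - K_X = D + (-K_X)$ is nef, being a sum of the two nef classes $D$ and $-K_X$, and it is big precisely when $(D - K_X)^2 = D^2 + 2\,D\cdot(-K_X) > 0$. The only way bigness can fail is $D^2 = D\cdot(-K_X) = 0$; by the Hodge index theorem two such classes span a totally isotropic subspace of the signature $(1,\rho-1)$ lattice $\Pic(X)$, which forces them to be proportional, so $D = -d K_X$ for some $d > 0$ (this can occur only for $r = 9$, since for $r\le 8$ the ample class $-K_X$ is not isotropic and this degenerate case collapses to $D = 0$). Thus for every nonzero nef $D$ with $D \ne -dK_X$ the divisor $D - K_X$ is nef and big, so Kawamata--Viehweg gives $H^1(D) = H^2(D) = 0$; Riemann--Roch then yields the formula $h^0(D) = \tfrac12 (D - K_X)\cdot D + 1$, and in particular $h^0(D)\ge 1$, so $D$ is effective. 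Finally $h^0(D) > 1$ because $D^2 + (-K_X)\cdot D > 0$: if $D^2 > 0$ this is clear, and if $D^2 = 0$ then $D\ne -dK_X$ forces $(-K_X)\cdot D > 0$ by the same Hodge-index dichotomy.

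It then remains to treat the excluded boundary classes $D = -d K_X$ with $r = 9$ and $d > 0$, where $-K_X$ is nef but not big. Here I would restrict to a smooth anticanonical elliptic curve $C \in |-K_X|$ and analyze $\CO_C(-K_X|_C)$, a degree-zero line bundle on $C$ since $(-K_X)\cdot C = (-K_X)^2 = 0$. Using the exact sequence
\[
0 \to \CO_X\big(-(d-1)K_X\big) \to \CO_X(-dK_X) \to \CO_C(-dK_X|_C) \to 0
\]
and induction on $d$, the computation splits according to \lemref{LEM004}: when $\Lambda$ is the intersection of two general cubics the relation \eqref{E233} shows $-K_X|_C \isom \CO_C$, so $h^0(C, -dK_X|_C) = 1$ and $h^0(-dK_X) = d + 1 > 1$, confirming that the third bullet holds while the Riemann--Roch formula (which predicts $h^0 = 1$, as $K_X^2 = 0$) genuinely fails; when $\Lambda$ consists of $9$ general points, $-K_X|_C$ is a non-torsion degree-zero bundle, so $h^0(C, -dK_X|_C) = 0$ for $d\ge 1$, giving $h^0(-dK_X) = 1$ and, together with $h^2 = 0$ and $\chi = 1$, also $h^1 = 0$, confirming the second bullet while the third fails. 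Effectivity is clear in both cases since $-dK_X = dC$.

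I expect the main obstacle to be this borderline case $D = -dK_X$: pinning down, via the Hodge index theorem, that it is the exact locus where $D - K_X$ stops being big and Kawamata--Viehweg ceases to apply, and then carrying out the elliptic-curve computation that separates the two-cubics behaviour ($h^0 = d+1$) from the nine-general-points behaviour ($h^0 = 1$). The remainder is a routine combination of vanishing theorems and Riemann--Roch.
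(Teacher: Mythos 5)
Your proposal is correct, and its skeleton coincides with the paper's: Riemann--Roch plus Serre duality to kill $h^2$ (the same ample-intersection argument), Kawamata--Viehweg applied to the nef divisor $D-K_X$ to get $h^1(D)=0$ and formula \eqref{E001} off the boundary, and a separate treatment of the anticanonical classes. You diverge at two points, both legitimately. First, where the paper identifies the exceptional locus by invoking \lemref{LEM004} ($-K_X\cdot D>0$ for effective $D$ unless $r=9$ and $D=-dK_X$), whose proof restricts to a smooth anticanonical curve $C$ and uses genericity of $\Lambda$ to exclude relations in $\Pic(C)$, you argue purely numerically via the Hodge index theorem (two orthogonal isotropic classes in a lattice of signature $(1,\rho-1)$ are proportional). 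This cleanly separates numerics from genericity: in your version the hypotheses on $\Lambda$ enter only when distinguishing the torsion/non-torsion behaviour of $-K_X|_C$, which is also exactly where the paper uses them inside the proof of \lemref{LEM004}. Second, in the boundary case $D=-dK_X$ the paper merely asserts the values of $h^0$ and $h^1$, whereas you derive $h^0(-dK_X)=d+1$ (two cubics, using $-K_X|_C\isom\CO_C$ from \eqref{E233} and surjectivity of restriction via the second cubic) versus $h^0(-dK_X)=1$, $h^1=0$ (nine general points) by the restriction sequence and induction on $d$; this is a more self-contained account of why \eqref{E001} genuinely fails in the one case and survives in the other. Two small repairs: your expansion $(D-K_X)^2=D^2+2D\cdot(-K_X)$ omits $K_X^2=9-r\ge 0$, so the displayed identity is valid only for $r=9$; since all three terms are nonnegative your case analysis is unaffected (and you correctly note that for $r\le 8$ the degenerate case collapses to $D=0$), but the ``precisely when'' should be corrected. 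Also, Hodge index gives proportionality only numerically; add that on these rational surfaces numerical and linear equivalence coincide ($\Pic(X)\isom\BZ^{r+1}$ is torsion free with $\Pic_0=0$) and that $-K_X=3H-\sum E_i$ is primitive, so $D=-dK_X$ with $d\in\BZ^+$ in $\Pic(X)$ itself.
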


\begin{proof}
By Riemann-Roch,
\begin{equation}\label{E075}
h^0(D) - h^1(D) + h^2(D) = \frac{(D - K_X) D}{2} + 1.
\end{equation}
By Serre duality, $h^2(D) = h^0(K_X - D)$. Since $D$ is nef and $-K_X$ is nef, $\frac{(D - K_X) D}{2}\ge 0$  and $H(K_X -D) < 0$ for every ample divisor $H$, which implies that $K_X -D$ is not effective and
thus $h^2(D) = h^0(K_X -D) = 0$.    Therefore,
\begin{equation}\label{E228}
h^0(D) = \frac{(D - K_X) D}{2} + 1+h^1(D)\ge 1, 
\end{equation}
and $D$ is
effective.  

By Lemma \ref{LEM004}, $-K_X D > 0$ unless $r = 9$ and $D = -d K_X$.
If  $-K_X D > 0$, clearly $h^0(D) > 1$. 
Then $(D-K_X)^2 > 0$,  thus
$D - K_X$ is big and nef. We have $H^1(D)=0$ by Kawamata-Viehweg vanishing theorem
and \eqref{E001} follows.  

If $-K_X D = 0$, $r = 9$ and $D = -dK_X$. If $\Lambda$ is a set of $9$ general points,
$h^0(D) = 1$ and we still have $h^1(D) = 0$ and \eqref{E001}.
If $\Lambda$ is the intersection of two general cubics and
$D = -dK_X$, $h^0(D) > 1$.
\end{proof}

\begin{lem}\label{LEM005}
Every effective divisor $D$ on  $X$ can be uniquely written as
\begin{equation}\label{E002}
D = A + m_1 I_1 + m_2 I_2 + ... + m_a I_a
\end{equation}
in $\Pic(X)$ for some nef and effective divisor $A$ and some set of disjoint $(-1)$-curves $I_1, I_2, ..., I_a$ such that $A I_k = 0$ for all $k$ and $m_1,m_2,...,m_a \in \BZ^+$. In addition, $h^0(D) = h^0(A)$.
\end{lem}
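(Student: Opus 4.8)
The plan is to identify the curves $I_1,\dots,I_a$ intrinsically and then verify the two defining properties. Throughout I would use \lemref{LEM004}: every integral curve $C$ on $X$ satisfies $C^2\ge -1$, is nef when $C^2\ge 0$, and is a $(-1)$-curve when $C^2=-1$; in particular a non-nef irreducible curve is a $(-1)$-curve, and there are no curves of self-intersection $\le -2$.

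Let $S$ be the set of $(-1)$-curves $C$ with $D\cdot C<0$. Since $D$ is effective, any such $C$ must be a component of $D$, so $S$ is finite; write $S=\{I_1,\dots,I_a\}$. First I would show the $I_k$ are pairwise disjoint: given distinct $I,I'\in S$, write $D=\alpha I+\beta I'+R$ with $\alpha,\beta\ge 1$ and $R\ge 0$ supported away from $I,I'$, and add the inequalities $D\cdot I<0$ and $D\cdot I'<0$; the sum equals $(\alpha+\beta)(I\cdot I'-1)+R\cdot I+R\cdot I'$, which is $\ge 0$ unless $I\cdot I'=0$, forcing disjointness. Setting $m_k=-D\cdot I_k\in\BZ^+$ and $A=D-\sum_k m_k I_k$, disjointness gives $A\cdot I_k=D\cdot I_k-m_k I_k^2=0$ at once.

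The main work is to show $A$ is effective and nef. Effectivity follows because the multiplicity $n_k$ of $I_k$ in $D$ satisfies $n_k\ge m_k$: indeed $D\cdot I_k=-n_k+(D-n_k I_k)\cdot I_k\ge -n_k$, since $D-n_k I_k$ is effective and, by disjointness, meets $I_k$ nonnegatively. For nefness I would argue by contradiction: if $A\cdot C<0$ for some integral curve $C$, then $C$ is a component of $A$ (hence of $D$) with $C^2<0$, so $C$ is a $(-1)$-curve with $C\ne I_k$ (because $A\cdot I_k=0$), whence $C\notin S$ and $D\cdot C\ge 0$. Writing $D=\mu C+\sum_k n_k I_k+R$ with $R\ge 0$ avoiding $C$ and the $I_k$, substituting $m_k=-D\cdot I_k$ into $A=D-\sum_k m_k I_k$ yields the key identity
\begin{equation*}
A\cdot C=\mu\Big(\sum_k (C\cdot I_k)^2-1\Big)+R\cdot C+\sum_k (R\cdot I_k)(C\cdot I_k).
\end{equation*}
Since $A\cdot C<0\le D\cdot C$, the relation $D\cdot C=A\cdot C+\sum_k m_k(I_k\cdot C)$ forces $C\cdot I_k>0$ for some $k$, so the bracket is $\ge 0$ and every remaining term is $\ge 0$, giving $A\cdot C\ge 0$, a contradiction. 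This is the step I expect to be the crux, as it is exactly where the absence of curves of self-intersection $\le -2$ is used to rule out any negative contribution beyond disjoint $(-1)$-curves.

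Finally, for uniqueness I would show the data is intrinsic. In any decomposition $D=A+\sum_k m_k I_k$ of the stated form, nefness of $A$ together with $A\cdot I_k=0$ gives $D\cdot I_{k_0}=-m_{k_0}<0$, so each $I_{k_0}$ is a $(-1)$-curve lying in $S$ with $m_{k_0}=-D\cdot I_{k_0}$; conversely any $(-1)$-curve $J\notin\{I_k\}$ has $D\cdot J=A\cdot J+\sum_k m_k(I_k\cdot J)\ge 0$, so $J\notin S$. Hence $\{I_k\}=S$ with the forced multiplicities, and then $A=D-\sum_k m_k I_k$ is determined. Equivalently, one may invoke the uniqueness of the Zariski decomposition and observe that, by \lemref{LEM004}, its negative part has intersection matrix $-\mathrm{Id}$, which simultaneously forces disjoint $(-1)$-curves and makes the coefficients $-D\cdot I_k$ positive integers.
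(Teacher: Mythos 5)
Your proof is correct, and it reaches the lemma by a genuinely different route than the paper. The paper starts from the decomposition $D = D_\mu + D_f$ into moving and fixed parts, picks a \emph{maximal} nef subdivisor $A$ with $D = A + F$, $D_\mu\subset A$, $F\subset D_f$, and then uses maximality to force the three properties: each component $I$ of $F$ is a $(-1)$-curve (this step needs \lemref{LEM003}: a nef component would have $h^0(I)\ge 2$ and could not sit in the fixed part), distinct components are disjoint (else $A+I_1+I_2$ would be nef), and $A\cdot I=0$ (else $A+I$ would be nef). You instead construct the decomposition intrinsically from the negativity set $S=\{I:\ I\text{ a }(-1)\text{-curve},\ D\cdot I<0\}$, set $m_k=-D\cdot I_k$ and $A=D-\sum_k m_k I_k$, and verify everything by pure intersection computations; I checked your key identity (it follows from $n_k-m_k=\mu(C\cdot I_k)+R\cdot I_k$, which is just $m_k=-D\cdot I_k$ expanded), and the nefness contradiction is sound: $C\notin S$ gives $D\cdot C\ge 0$, which forces $C\cdot I_k>0$ for some $k$, making every term of the identity nonnegative. (One quibble: in the effectivity step, $D-n_kI_k$ meets $I_k$ nonnegatively because it contains no component equal to $I_k$, not because of disjointness of the $I_j$'s.) Your route uses only \lemref{LEM004} and elementary intersection theory, avoids linear systems and the existence of a maximal nef subdivisor, and your uniqueness argument is more complete than the paper's one-line justification $m_k=-D\cdot I_k$; in effect you prove directly the characterization that the paper records only as a remark after its proof, and, as you note, this is the Zariski decomposition, integral here because the negative part's Gram matrix is $-\mathrm{Id}$. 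What the paper's route buys in exchange is the extra geometric fact that $\sum_k m_k I_k$ lies in the fixed part of $|D|$, which is what is actually invoked later (the identity $h^0(D)=h^0(A)$ used in \eqref{E005}); with your construction this needs the small additional observation that $D\cdot I_k<0$ forces $I_k$ into the base locus of $|D|$, followed by induction.
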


\begin{proof}
Indeed, \eqref{E002} is the Zariski decomposition of $D$ and
$A$ is the maximal element of the set of nef divisors $B$ such that
$D - B\in \NE^1(X)$.

Let $D_f$ be the fixed part of  $|D|$ and  write  $D =  D_\mu+D_f $ as in \eqref{E076}. 
Note that  $D_\mu$ is nef as it is easy to verify that $C\cdot D_\mu\ge 0$ for every integral curve $C$. We let
\begin{equation}\label{E227}
D = D_\mu + D_f = A + F
\end{equation}
where $A\supset D_\mu$, $F\subset D_f$ and $A$ is nef and maximal in the sense that $A + F'$ is not nef for every nonzero effective divisor $F'\subset F$.

First, every  irreducible component $I$ of $F$ is a $(-1)$-curve. Otherwise, by Lemma \ref{LEM004} $I$ is nef, so would be $A+I$, contradiction with the choice of $A$. 

Second, $I_1\cdot I_2=0$ for  two distinct irreducible components  $I_1$ and $I_2$ of $F$.
Otherwise, if $I_1\cdot I_2 > 0$,  then $I_1+I_2$ is nef and $A+I_1+I_2$ is nef. This contradicts with our choice of $A$.

Last, $A\cdot I = 0$ for  every  irreducible component $I$ of $F$. Otherwise, if $A\cdot I>0$, then $A+I$ is nef, this contradicts with our choice of $A$.

In conclusion,
\begin{equation}\label{E226}
D = A + F = A + m_1 I_1 + m_2 I_2 + ... + m_a I_a
\end{equation}
with required properties. Clearly, this representation of $D$ is unique since
$m_k = -D I_k$ for all $k$. Finally, since $\sum m_k I_k\subset D_f$,
$h^0(D) = h^0(A)$.
\end{proof}

\begin{rem}\label{REM300}
Actually we have proved the following statement: for every effective divisor $D$, let $S$ be the set of $(-1)$-curves $I$ satisfying $D\cdot I<0$, then $A=D-\sum_{I\in S}( I\cdot D)I$ is nef and effective; $A\cdot I=0$ for every $I\in S$; $I_1\cdot I_2=0$ for $I_1, I_2\in S$. Moreover, the unique decomposition implies a bijection from $|D|$ to $|A|$.
\end{rem}

\begin{lem}\label{LEM007}
Let $D$ be an effective divisor on  $X$.
Then $H^1(D) = 0$ and \eqref{E001} holds if and only if $DI\ge -1$ for all
$(-1)$-curves $I\subset X$, unless $\Lambda$ is the intersection of two general cubic curves and $A = -dK_X$ for some $d >0$ in \eqref{E002}.
\end{lem}

\begin{proof}
We write $D=A +m_1 I_1 +m_2 I_2 + ... + m_aI_a$ in the form of \eqref{E002}. Clearly, $DI\ge -1$ for all $(-1)$-curves $I$ if and only if $m_i =1$ for all $i$. For every $I_j$, consider  the exact sequence
\begin{equation}\label{E079}
 H^1(\CO_X(D)) \xrightarrow{} H^1(\CO_{I_j}(-m_j)) \xrightarrow{} H^2(\CO_X(D - I_j)),
\end{equation}
where the last term vanishes because $D - I_j$ is effective. If $H^1(D) = 0$, then $H^1(\CO_{I_j}(-m_j))=0$; as $m_j\in\BZ^+$, thus $m_j=1$.

Conversely, suppose that $m_i = 1$ for all $i$.
We observe that if $B I = 0$ and $H^1(B) = 0$,
then $H^1(B+I) = 0$ by the exact sequence
\begin{equation}\label{E078}
H^1(\CO_X(B)) \xrightarrow{} H^1(\CO_X(B+ I)) \xrightarrow{} H^1(\CO_I(-1)).
\end{equation}
So we can inductively show that
$H^1(D) = H^1(A + I_1 + I_2 + ... + I_a) = 0$.
\end{proof}

\begin{lem}\label{LEM006}
Let $D$ be an effective divisor on $X$. Suppose that $r\ge 2$. Then
\begin{itemize}
\item $D$ is nef if and only if $DI \ge 0$ for all $(-1)$-curves $I\subset X$.
\item $D$ is ample if and only if $DI > 0$ for all $(-1)$-curves $I\subset X$
and $D$ is not a multiple of $-K_X$ when $r = 9$.
\item $D$ is ample if and only if
\begin{equation}\label{E077}
D = m(-K_X) + F
\end{equation}
for some $m\in \BZ^+$ and some nef divisor
$F$ that is not ample and not a multiple of $-K_X$ when $r =9$.
\end{itemize}
\end{lem}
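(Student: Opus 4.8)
The three statements are successively sharper positivity criteria, and the plan is to reduce each to a standard cone criterion (Nakai--Moishezon or Kleiman), feeding in \lemref{LEM004}, \lemref{LEM005} and \lemref{LEM003} as the geometric input. For the first bullet, one direction is immediate, since a nef divisor meets every curve, and in particular every $(-1)$-curve, nonnegatively. For the converse I would use the decomposition of \lemref{LEM005}: write $D = A + m_1 I_1 + \cdots + m_a I_a$ with $A$ nef, the $I_k$ pairwise disjoint $(-1)$-curves, $A I_k = 0$ and $m_k \in \BZ^+$. Intersecting with $I_k$ and using $I_j I_k = 0$ for $j \ne k$ and $I_k^2 = -1$ gives $D I_k = -m_k < 0$. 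Hence if $D I \ge 0$ for every $(-1)$-curve $I$, the decomposition can have no $(-1)$-curve terms and $D = A$ is nef.

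For the second bullet I would argue through Nakai--Moishezon, reducing ampleness to $D^2 > 0$ together with $D C > 0$ for every integral curve $C$. The forward implication is routine: ampleness gives $DC > 0$ for all $C$ and $D^2 > 0$, and when $r = 9$ one has $K_X^2 = 0$, so $(-dK_X)^2 = 0$ and an ample $D$ cannot be a multiple of $-K_X$. For the converse, the first bullet already shows $D$ is nef, and once $D^2 > 0$ is known, positivity against every integral curve is easy: on $(-1)$-curves it is the hypothesis, and on a nef curve $C$ (so $C^2 \ge 0$) the Hodge index inequality $D^2 C^2 \le (D C)^2$ together with $D^2 > 0$ rules out $DC = 0$, while $DC \ge 0$ by nefness. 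The heart of the matter is thus to prove $D^2 > 0$, and I argue this by contradiction. Suppose $D$ is nef with $D^2 = 0$; since $D$ is not a multiple of $-K_X$ when $r = 9$, \lemref{LEM004} gives $-K_X \cdot D > 0$, so by \lemref{LEM003} we have $h^0(D) \ge 2$ and the moving part of $|D|$ is a nef class of self-intersection $0$, which after passing to a multiple defines a fibration $X \to \PP^1$. Its general fibre $f$ is vertical with $f^2 = 0$, and adjunction together with $-K_X \cdot f > 0$ forces $f$ to be a smooth rational curve with $-K_X \cdot f = 2$, i.e.\ a conic-bundle fibre; moreover $D \cdot f = 0$ and $D$ nef make $D$ numerically a nonnegative multiple of $f$. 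For $r \ge 2$ this conic bundle has $\rho(X) - 2 = r - 1 \ge 1$ reducible fibres, and a reducible fibre splits as $f = I_1 + I_2$ with $I_1, I_2$ being $(-1)$-curves and $I_1 I_2 = 1$; then $f \cdot I_1 = I_1^2 + I_1 I_2 = 0$, whence $D \cdot I_1 = 0$, contradicting the hypothesis. I expect this fibration analysis to be the main obstacle, as it is the only step needing structural input beyond \lemref{LEM004}.

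For the third bullet, the backward direction is a direct application of the second: if $D = m(-K_X) + F$ with $m \ge 1$ and $F$ nef, then for any $(-1)$-curve $I$ one has $-K_X \cdot I = 1$ and $F \cdot I \ge 0$, so $D \cdot I \ge m \ge 1 > 0$; and if $r = 9$ and $D = -dK_X$, then $F = (d - m)(-K_X)$ would be a nonnegative multiple of $-K_X$ (nonnegative by nefness of $F$), which is excluded, so the second bullet yields ampleness. For the forward direction I would set $F = D + m K_X$ with $m$ the largest integer for which $D + m K_X$ is nef; this maximum is finite because $(D + mK_X)\cdot I = D \cdot I - m < 0$ for $m$ large and any $(-1)$-curve $I$, and the admissible $m$ form an initial segment of $\BN$ since a sum of nef divisors is nef. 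The essential point is that $D + K_X$ is already nef when $D$ is ample, so that $m \ge 1$: recalling that on a surface $\NE^1(X)$ is the cone of curves, I would verify this on its extreme rays, which are generated by $(-1)$-curves (where $(D + K_X)\cdot I = D\cdot I - 1 \ge 0$), conic-bundle fibres $f \equiv I_1 + I_2$ (where $(D + K_X)\cdot f = D\cdot I_1 + D\cdot I_2 - 2 \ge 0$), and, for $r = 9$, the anticanonical ray (where $(D + K_X)\cdot(-K_X) = D \cdot(-K_X) > 0$). Maximality of $m$ then forces $F$ to be non-ample, since otherwise the same fact applied to the ample divisor $F$ would make $F + K_X = D + (m+1)K_X$ nef; finally, if $r = 9$ and $F$ were a multiple of $-K_X$ then $D = (m + c)(-K_X)$ would have $D^2 = 0$, contradicting ampleness. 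This produces the asserted decomposition and closes the equivalence.
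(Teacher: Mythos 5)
The paper states \lemref{LEM006} with no proof at all (it is treated as standard Del Pezzo theory and the text passes directly to the next subsection), so there is nothing to compare line by line; judged on its own, your argument is essentially correct and complete. Your first bullet is exactly the intended use of \lemref{LEM005}: since $A\cdot I_k=0$, $I_j\cdot I_k=0$ for $j\ne k$ and $I_k^2=-1$, the decomposition gives $D\cdot I_k=-m_k<0$, so the hypothesis kills the fixed $(-1)$-part --- indeed the remark after \lemref{LEM005} already records this. Your second bullet via Nakai--Moishezon is sound, and the contradiction argument for $D^2>0$ (nef isotropic $D$ has $h^0\ge 2$ by \lemref{LEM004} and \lemref{LEM003}, the moving part is base point free of square zero, Stein factorization yields a conic bundle whose reducible fibre $I_1+I_2$ satisfies $D\cdot I_1=0$) is the genuinely nontrivial step and it works; note that only the \emph{existence} of a reducible fibre is needed, which follows because all fibres irreducible would force $X$ to be relatively minimal over $\PP^1$, i.e.\ $\rho(X)=2$, contradicting $\rho(X)=r+1\ge 3$ --- your exact count ``$\rho(X)-2=r-1$ reducible fibres'' is true but unjustified and unnecessary.

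Two points deserve tightening. First, in the Nakai--Moishezon step your inequality $D^2C^2\le (DC)^2$ does not by itself rule out $DC=0$ when $C^2=0$; you need the definite form of the Hodge index theorem (the intersection form is negative definite on $D^\perp$ when $D^2>0$, or equivalently the equality case $C\equiv\lambda D$, whence $\lambda D^2=0$ forces $C\equiv 0$). Second, the crux of your third bullet, namely ``$D$ ample $\Rightarrow D+K_X$ nef'', is carried entirely by the assertion that $\overline{\NE_1(X)}$ is generated by the $(-1)$-curves for $2\le r\le 8$, together with the ray of $-K_X$ when $r=9$; this is a correct and citable classical fact (and is consistent with \lemref{LEM004}, by which every integral curve is a $(-1)$-curve, nef, or a multiple of $-K_X$), but it is an external input of the same order of difficulty as the lemma itself, so you should either cite it explicitly or prove it --- note also that conic-bundle fibres $f\equiv I_1+I_2$ are not extreme rays for $r\ge 2$, though checking nonnegativity on this larger generating set is harmless. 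The rest of your third bullet (finiteness and monotonicity of the admissible twists $m$, non-ampleness of $F$ by maximality applied to $F$ itself, and exclusion of $F$ a nonnegative multiple of $-K_X$ when $r=9$, with the zero divisor counted as a multiple --- as it must be, or else $F=0$, $D=m(-K_X)$ would falsify the backward implication) is handled correctly.
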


\begin{proof}
The first statement follows directly from Lemma \ref{LEM005}.

Let $m$ be the minimum of $DI$ for all $(-1)$-curves $I\subset X$ and let
\[
F = D + m K_X.
\]
Clearly, $FI \ge 0$ for all $(-1)$-curves $I$ and 
$FI = 0$ for some $(-1)$-curve $I$;
hence $F$ is nef and not ample. If $D$ is ample, then $m > 0$ and $F$ is not a multiple
of $-K_X$ when $r = 9$, obviously.

On the other hand,
suppose that $m > 0$ and $F$ is nef and not a multiple of $-K_X$ when $r=9$.
Let $C$ be an integral curve. Then $-K_X C > 0$ unless
$C = -K_X$ and $r = 9$. When $C = -K_X$ and $r=9$, we have $-K_X C = 0$ and $FC > 0$. In
conclusion, $DC > 0$ for all integral curves $C$. So $D$ is ample.
This proves both the second and third statements.
\end{proof}
 
\subsection{Euler-Chow series}

Let $X = P_r$ be the blow-up of $\PP^2$ at $r\le 8$  points in general position. Write $E_1(X) = f_r(t_0, t_1, ..., t_r)$, 
where $t_0 = t^H$ and $t_i = t^{E_i}$ for $i=1,2,...,r$. Our aim is to develop a recursive formula
for $\ECS{1}$ in three steps.
First, rearrange $E_1(X)$  in terms of  $N_X(t)$ given in \eqref{E006}; second, express $N_X(t)$ in terms of the simplest series $\sum_{A\text{ nef}} t^A$ defined over a nef cone; last, write $\sum_{A\text{ nef}} t^A$ as a rational function by decomposing the nef cone of $P^r$ as a union of simplicial rational polyhedral cones. In particular, detailed computation is given for $r\leq 4$.

For each set $S = \{I_1, I_2, ..., I_a\}$ of disjoint $(-1)$-curves, let $M_S$ be the semigroup $\{ \sum_{i=1}^a m_i I_i | m_i\in\mathbb{Z}^+ \}$. 
By Lemma \ref{LEM005}, every effective divisor $D$ on $X$ is of the form \eqref{E002} and
$h^0(D) = h^0(A)$, thus  
\begin{align}\label{E005}
E_1(X) = \sum_{S}\sum_{I\in M_S} 
\sum_{\genfrac{}{}{0pt}{}{A\in S^{\perp}}{A\text{ nef}}} h^0(A) t^{A+I} 
=\sum_{S}\left(  \sum_{I\in M_S}t^I  \cdot  \sum_{\genfrac{}{}{0pt}{}{A\in S^{\perp}}{A\text{ nef}}} h^0(A) t^A     \right)  
\end{align}
where $S$ runs over all sets of disjoint $(-1)$-curves, $S^\perp$ is the group of divisors $B$ satisfying $BI = 0$ for all $I\in S$,  and $ \sum_{I\in M_S}t^I=\tfrac{t^{I_1+I_2+...+I_a}}{(1-t^{I_1})(1-t^{I_2}) ... (1 - t^{I_a})}$.

So naturally we turn to consider the series
\begin{equation}\label{E006}
N_X(t) = \sum_{A\text{ nef}} h^0(A) t^A = \sum_{A\text{ nef}}
\left(\frac{(A - K_X)A}{2}+1\right) t^A.
\end{equation}
Let $g_r(t_0,t_1,...,t_r) = N_X(t)$ for $X = P_r$. We  first express $f_r$ in terms of $g_r$.

Note that for each $S$ in \eqref{E005}, there is a map $\pi_S: X\to X_S$ of Del Pezzo surfaces given by contracting the $(-1)$-curves in $S$. Obviously,  $S^\perp = \pi_S^*(\Pic(X_S))$.
Thus, we have
\begin{equation}\label{E080}
\sum_{\genfrac{}{}{0pt}{}{A\in S^{\perp}}{A\text{ nef}}} h^0(A) t^A = \pi_S^*(N_{X_S}(t)),
\end{equation}
where $ \pi_S^*(N_{X_S}(t))=\sum_{A\text{ nef}, A\in \Pic(X_S)} h^0(A) t^{\pi_S^*(A)}$.
%% is obtained from $N_{X_S}(t)$ with $t^A$ replaced by $  t^{\pi_S^*(A)}$ for every nef divisor class $A\in \Pic(X_S)$. 

Let $a=\#S$. Clearly, if $a\neq r-1$, $X_S\isom P_{r-a}$; if $a = r-1$,
$X_S$ is either $P_1$ or $\BF_0= \PP^1\times\PP^1$.
In particular, if $S=S_k := \{ E_{k+1}, E_{k+2}, ..., E_r \}$,  then $X_{S}=P_k$ and the sum in \eqref{E080} is $ g_{k}(t_0, t_1,..., t_{k})$; if  $S= T := \{ H - E_1 - E_2, E_3, ..., E_r\}$,  then $X_S=\BF_0$ and the sum in \eqref{E080} is $ q(t_0t_1^{-1}, t_0t_2^{-1})$, where $q(t_1, t_2) = \tfrac{1}{(1-t_1)^2(1-t_2)^2}$ is the Euler-Chow series of $\BF_0$ with  $t_1 = t^{H_1}$ and  $t_2 = t^{H_2}$ for two rulings  $\{ H_1, H_2\}$   of $\BF_0$.

To locate all sets of disjoint $(-1)$-curves on $P_r$, we consider the group
$\Phi\subset \Aut(\Pic(P_r))$ generated by $\Sigma_r$ and $\varphi_{abc}$ for all distinct integers
$1\le a, b, c\le r$, where the action of the symmetric group $\Sigma_r$ of $\{1,2,...,r\}$ on $\Pic(X)$ is defined by
sending $H\to H$ and $E_l\to E_{\sigma(l)}$ for $\sigma\in \Sigma_r$, and  $\varphi_{abc}$ is   given by
\begin{equation}\label{E013}
\begin{split}
\varphi_{abc}(H) &= 2H - E_a - E_b - E_c\\
\varphi_{abc}(E_a) &= H - E_b - E_c\\
\varphi_{abc}(E_b) &= H - E_c - E_a\\
\varphi_{abc}(E_c) &= H - E_a - E_b\text{ and}\\
\varphi_{abc}(E_i) &= E_i\text{ for } i\ne a,b,c.
\end{split}
\end{equation}

Indeed, for Del Pezzo surfaces $P_r$ with $3\leq r \leq 8$,
$\Phi$ are respectively Weyl groups of type $A_1\times A_2$,$A_4$, $D_5$, $E_6$, $E_7$ and $E_8$, see \cite[Theorem 23.9]{Man}. 

\begin{lem}\label{LEM010}
Let $\Pi$ be the set of $(-1)$-curves on $X$. Then
\begin{itemize}
\item
$\Phi(\Pi) = \Pi$, i.e., every element of $\Phi$ induces a
permutation of $\Pi$. Indeed, $\Phi $ is isomorphic to  the group $ \Aut(\Pi)$ of bijections $\varphi:\Pi\to \Pi$ preserving the
intersection pairing, i.e., $\varphi(I_1) \varphi(I_2) = I_1 I_2$ for all $I_1, I_2\in\Pi$.
\item
For every subset $S$ of disjoint $(-1)$-curves,
there exists $\varphi\in \Phi$ such that $\varphi(S) = S_{k}$ or $T$, where  $k=r-\#S$. 
\end{itemize}
\end{lem}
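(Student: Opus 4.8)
The plan is to prove \lemref{LEM010} by exploiting the fact that $\Phi$ acts on $\Pic(X)$ preserving the intersection form and the canonical class $K_X$, so that it permutes the finite set $\Pi$ of $(-1)$-curves. Recall that a $(-1)$-curve on $X = P_r$ is characterized cohomologically as a class $I$ with $I^2 = -1$ and $K_X I = -1$ (equivalently $I^2 = IK_X = -1$, so $p_a(I)=0$). First I would check directly from the defining formulas \eqref{E013} that each generator $\varphi_{abc}$ of $\Phi$ (and trivially each $\sigma\in\Sigma_r$) preserves both the intersection pairing and $K_X = -3H + \sum E_i$: this is the standard verification that $\varphi_{abc}$ is the reflection in the root $H - E_a - E_b - E_c$ inside the lattice $\Pic(P_r) = \BZ^{1,r}$ orthogonal to $K_X$. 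Since the numerical characterization of a $(-1)$-curve is invariant under any pairing- and $K_X$-preserving automorphism, $\Phi$ sends $(-1)$-classes to $(-1)$-classes; and because each $(-1)$-class is effective and represented by a unique irreducible curve (for points in general position), $\Phi(\Pi) = \Pi$. This gives the first bullet point, the identification $\Phi \cong \Aut(\Pi)$ following from the classical fact that for $3\le r\le 8$ the Weyl group acts as the full group of symmetries of the configuration of lines on the Del Pezzo surface.

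For the second bullet point, the goal is to show that every set $S$ of pairwise disjoint $(-1)$-curves can be carried by some $\varphi\in\Phi$ to one of the two standard sets $S_k = \{E_{k+1},\dots,E_r\}$ or $T = \{H - E_1 - E_2, E_3,\dots,E_r\}$, where $k = r - \#S$. The key point is that contracting $S$ yields a map $\pi_S: X \to X_S$ to a smaller Del Pezzo surface (or to $\BF_0$ in the extremal case $\#S = r-1$), and two such sets are $\Phi$-equivalent precisely when the resulting contractions are isomorphic as weak Del Pezzo surfaces compatibly with the marking. Concretely I would argue by induction on $\#S$: pick one curve $I \in S$; since $\Phi$ acts transitively on $\Pi$ (which follows from the first bullet together with the Weyl group acting transitively on the $(-1)$-curves, a standard fact for each of the root systems $A_2\times A_1, A_4, D_5, E_6, E_7, E_8$), there is $\varphi\in\Phi$ with $\varphi(I) = E_r$. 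Then $\varphi(S\setminus\{I\})$ is a set of disjoint $(-1)$-curves each orthogonal to $E_r$, hence pulled back from $P_{r-1}$, and I apply the inductive hypothesis to this set on $P_{r-1}$, where the stabilizer of $E_r$ in $\Phi$ contains the corresponding Weyl group of $P_{r-1}$.

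The main obstacle I anticipate is the base case and the bookkeeping of which standard form arises: when $\#S = r-1$ the contraction lands on a surface of Picard rank $2$, which is either $P_1 = \Bl_{pt}\PP^2$ or $\BF_0 = \PP^1\times\PP^1$, and these two are \emph{not} $\Phi$-equivalent — this is exactly why two normal forms $S_k$ and $T$ are needed. I would handle this by classifying the rank-$2$ weak Del Pezzo surfaces obtained by contracting $r-1$ disjoint lines and observing that the discrete invariant distinguishing them (for instance, whether the anticanonical image has a $(-1)$-curve section or the two disjoint rulings) determines whether $S$ maps to $S_1$ or to $T$. The transitivity statement for $\Phi$ on $\Pi$ and, more generally, the fact that $\Phi$ acts transitively on sets of disjoint $(-1)$-curves of a fixed cardinality except for this final rank-$2$ dichotomy, is the technical heart; it rests on the classical theory of the Weyl group action on the lines of a Del Pezzo surface, which I would cite rather than reprove. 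With transitivity on single $(-1)$-curves in hand, the inductive reduction above reduces everything to this low-rank classification, completing the proof.
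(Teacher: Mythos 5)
Your proposal is correct and is essentially the same argument the paper relies on: the paper omits the proof entirely, deferring to the ``tedious'' standard argument of Hartshorne [H, V, Ex.\ 4.15], which is precisely your two steps --- checking that the generators of $\Phi$ preserve the intersection form and $K_X$ (hence permute the numerically characterized $(-1)$-classes, each represented by a unique irreducible curve for general points), followed by induction on $\#S$ via transitivity of $\Phi$ on $\Pi$ and descent through the stabilizer of $E_r$ acting as the Weyl group of $P_{r-1}$. The only point to watch is that transitivity of $\Phi$ on $\Pi$ holds for $r\ge 3$ but fails at $r=2$ (where $\Phi=\Sigma_2$ has the two orbits $\{E_1,E_2\}$ and $\{H-E_1-E_2\}$), which is exactly where the dichotomy $S_k$ versus $T$ (contraction to $P_1$ versus $\BF_0$) originates --- and your base-case classification handles precisely this.
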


The proof of the above lemma follows the argument for \cite[V, Ex 4.15]{H}, which we will omit due to its tedious nature. In \cite[V, 4.10.1]{H}, $\Phi$ is called the group of {\em automorphisms of the configuration
of lines} on $X$, as $\Pi$ consists of lines on $X$ under the map $X\to \PP^N$ by $|-K_X|$.

By Lemma \ref{LEM010}, every set  of disjoint $(-1)$-curves lies in the orbit of
$S_k$ or $T$ under $\Phi$. With this in mind, we can rewrite \eqref{E005} as
\begin{equation}\label{E083}
\begin{split}
&\quad f_r(t_0,t_1,...,t_r)\\
&= \sum_{k=0}^r \frac{1}{|\Phi_{S_k}|}
\sum_{\varphi\in \Phi} \varphi\left(g_k(t_0,t_1,...,t_k)\prod_{j=k+1}^r \frac{t_j}{1-t_j}\right)\\
&+ \sum_{\varphi\in\Phi} \frac{1}{|\Phi_T|}
\varphi\left(q(t_0t_1^{-1},t_0t_2^{-1})\frac{t_0t_1^{-1}t_2^{-1}}{1 - t_0t_1^{-1}t_2^{-1}}\prod_{j=3}^r \frac{t_j}{1-t_j}\right)\\
\end{split}
\end{equation}
where $\Phi_{S_k}$ and $\Phi_T$ are subgroups of  $\Phi$ consisting of $\varphi$ with $\varphi(S_k)=S_k$ and $\varphi(T)=T$ respectively. 

\subsection{Computation of $N_X(t)$}
To compute $N_X(t)$,
we can apply the algorithm in the proof of Proposition \ref{PROP200}.

Let
\begin{equation}\label{E084}
\rho_r(t_0, t_1, ..., t_r) = L_X(t) = \sum_{A\text{ nef}} t^A.
\end{equation}
As in  \eqref{E217}, because of \eqref{E001},
there exists a second order differential operator $Q$ such that $N_X(t) = Q(L_X(t))$, where $Q$ is defined as follows:
\begin{equation}\label{E085}
Q = \left(\frac{t_0^2}{2} \frac{\partial^2}{\partial t_0^2} + 2 t_0 \frac{\partial}{\partial t_0}\right)
- \frac{1}{2}\sum_{k=1}^r t_k^2 \frac{\partial^2}{\partial t_k^2} +1.
\end{equation}

As a result, the computation of $N_X(t)$ comes down to  that of $L_X(t)$,
 the formal sum of
$t^A$ over all the lattice points $A$ in the nef cone $\NM^1(X) \subset H^2(X,\BR)$ of $X$. 
As $\NM^1(X)$ is a rational polyhedral cone given by Lemma \ref{LEM006}, thus we can follow the
algorithm in the proof of Proposition \ref{PROP200} and compute $L_X(t)$ by subdividing $\NM^1(X)$ into
simplicial cones.

Now we compute $L_X(t)$ for $X=P_r$ when $r\leq 4$.

Case $r=1$. $\text{NM}^1(P_1)=\{a_0H+a_1(H-E_1) | a_0, a_1\in \mathbb{Z}_{\geq 0}\}$. Therefore, \begin{equation}
\rho_1(t_0, t_1)=\tfrac{1}{(1-t_0)(1-t_0/t_{1})}.
\end{equation}

Case $r=2$. $\text{NM}^1(P_2)=\{a_0H+a_1(H-E_1)+a_2(H-E_2) | a_0, a_1, a_2\in \mathbb{Z}_{\geq 0}\}$. Therefore, 
\begin{equation}
 \rho_2(t_0, t_1, t_2)=\tfrac{1}{(1-t_0)(1-t_0/t_{1})(1-t_0/t_2)}.
\end{equation}

Case $r=3$. $\text{NM}^1(P_3)=\{a_0H-\sum_{i=1}^3a_iE_i | a_i\in \mathbb{Z}_{\geq 0}, 0\leq i\leq 3; a_0\geq a_i+a_j,  1\leq i\neq j\leq 3\}$, thus $\text{NM}^1(P_3)=\{a_0H+\sum_{i=1}^3a_i(H-E_i) | a_i\in \mathbb{Z}_{\geq 0}, 0\leq i\leq 3\}\cup\{a_0(2H-\sum_{i=1}^3E_i)+\sum_{i=1}^3a_i(H-E_i) | a_i\in \mathbb{Z}_{\geq 0}, 0\leq i\leq 3\}$. Therefore,
\begin{align}
& \rho_3(t_0, t_1, t_2, t_3)=\tfrac{1}{(1-t_0^2/(t_1t_2t_3))(1-t_0/t_{1})(1-t_0/t_2)(1-t_0/t_3)}\\
&+\tfrac{1}{(1-t_0)(1-t_0/t_{1})(1-t_0/t_2)(1-t_0/t_3)}-\tfrac{1}{(1-t_0/t_{1})(1-t_0/t_2)(1-t_0/t_3)}. \nonumber
\end{align}

Case $r=4$. We have
\[
\text{NM}^1(P_4)=\{a_0H-\sum_{i=1}^4a_iE_i | a_i\in \mathbb{Z}_{\geq 0}, 0\leq i\leq 4; a_0\geq a_i+a_j,  1\leq i\neq j\leq 4\}.
\]
In the equations below, notations like $\text{NM}^1(P_4)\cap\{\cdots\}$ stand for subsets of  $\text{NM}^1(P_4)$ defined by inequalities inside the braces; all the $b_i$'s are assumed to go through $\mathbb{Z}_{\geq 0}$ unless otherwise noted.

Note that a decomposition of $\text{NM}^1(P_4)$ as a union of simplicial rational polyhedral cones can be given as follows: \begin{equation}
\text{NM}^1(P_4)=\mathcal{C}_1\cup\mathcal{C}_2\cup \bigcup_{i=1}^4
(\mathcal{C}_{3,i}\cup\mathcal{C}_{4,i}),
\end{equation}
where
\begin{align*}
&\mathcal{C}_1=\text{NM}^1(P_4)\cap\{\sum_{j=1}^4a_j\leq a_0 \}=\{ b_0H+\sum_{j=1}^4 b_j(H-E_j) \},\\
& \mathcal{C}_2=\text{NM}^1(P_4)\cap\{\sum_{j=1}^4a_j>a_0;  \sum_{j=1}^4a_j \leq a_i+a_0,  \forall   i >0\}\\
&=\{ b_0(-K)+\sum_{j=1}^4 b_j(H-E_j)\ |\  b_0>0\},\\
&\mathcal{C}_{3, i}=\text{NM}^1(P_4)\cap\{a_i=\text{min}_{j=1}^4(a_j);  \sum_{j=1}^4a_j > a_i+a_0; \sum_{j=1}^4a_j\leq a_0+2a_i\}\\
&=\{ b_0(-K)+b_i(-K-H)+\sum_{j=1}^4 b_j(H-E_j)-b_i(H-E_i)\ |\  b_i>0\},\\
&\mathcal{C}_{4, i}=\text{NM}^1(P_4)\cap\{a_i=\text{min}_{j=1}^4(a_j);   \sum_{j=1}^4a_j>  a_0+2a_i\}=\{ b_0(-K-H)\\
&+b_i(-K-H+E_i)+\sum_{j=1}^4 b_j(H-E_j)-b_i(H-E_i)\ |\  b_i>0\}.
\end{align*} 

Clearly any two sub-cones as above do not intersect unless both are of the form $\mathcal{C}_{3, i}$.  Assume that  the set of indices $\{ i, j, k, l\}$ is exactly $\{1, 2, 3, 4\}$.  We list all possible intersection sub-cones as follows: 
 \begin{align*}
&\mathcal{C}_{3, ij}=\mathcal{C}_{3, i}\cap \mathcal{C}_{3, j} \\
& =\{ b_0(-K)+b_i(-K-H)+b_k(H-E_k)+b_l(H-E_l)\ |\ b_i>0\};  \\
&\mathcal{C}_{3, ijk}=\mathcal{C}_{3, i}\cap \mathcal{C}_{3, j}\cap\mathcal{C}_{3, k}\\
&=\{ b_0(-K)+b_i(-K-H)+b_l(H-E_l) \ |\ b_i>0 \};  \\
&\mathcal{C}_{3, ijkl}=\cap_{i=1}^4\mathcal{C}_{3, i}=\{ b_0(-K)+b_1(-K-H) \ |\   b_1>0\}.
\end{align*}
Denote by $F_{\mathcal{C}}(t)$ the sum $\sum_{ \overrightarrow{v}\in\mathcal{C}}t^{\overrightarrow{v}}$ over a  subset $\mathcal{C}$ of a lattice. 
Therefore,
 \begin{align}
&\rho_4(t_0,\cdots, t_4)=F_{\mathcal{C}_1}(t)+F_{\mathcal{C}_2}(t)+\sum_{i=1}^4(F_{\mathcal{C}_{3, i}}(t)+F_{\mathcal{C}_{4,i}}(t))\\
&-\sum_{1\leq i\neq j\leq 4}F_{ \mathcal{C}_{3, ij}}(t)+\sum_{1\leq i\neq j\neq k\leq 4}F_{\mathcal{C}_{3, ijk}}(t)-F_{\mathcal{C}_{3, ijkl}}(t) \nonumber \\
&=\left(\frac{1}{1-t_0} +\frac{t^{-K}}{1-t^{-K}}\right)\cdot e_4+\frac{t^{-K-H}\cdot  (e_3-e_2+e_1-e_0)}{(1-t^{-K})(1-t^{-K-H})}    \nonumber  \\ 
&+\frac{1}{1-t^{-K-H}} \sum_{i=1}^4\left(\frac{t^{-K-H+E_i}}{1-t^{-K-H+E_i}}\cdot \frac{1-t_0/t_i}{\prod_{j=1}^4(1-t_0/t_j)}\right),\nonumber
\end{align}
where $e_0=1$, $ e_k$ is the elementary symmetric polynomial of degree $k$ in $(x_1, \cdots, x_4)$, and $x_k=\frac{1}{1-t_0/t_k}$ for $1\leq k\leq 4$.


\begin{thebibliography}{99}
\bibitem[A-L]{A-L}
M. Artebani and A. Laface, Rings of surfaces and the anticanonical Iitaka dimension, {\em Adv. Math.} {\bf 226} (2011), no. 6, 5252-5267.
Also preprint arXiv:math/0909.1835v2.

\bibitem[C]{C}
D. A. Cox, The homogeneous coordinate ring of a toric variety,
{\em J. Alg. Geom.} {\bf{4}} (1995), no. 3, 17-50.

%\bibitem[D-H]{D-H} I. V. Dolgachev and Y. Hu, with an appendix by Nicolas Ressayre, Variation of geometric invariant theory quotients, {\em Inst. Hautes\'Etudes Sci. Publ. Math.} (1998), no. 87, p. 5-56.

\bibitem[E]{E}
E. J. Elizondo, The Euler series of restricted Chow varieties,
{\em Compositio Math.} {\bf{94}} (1994), no. 3, 297-310.

\bibitem[EH]{EH}
E. J. Elizondo and R. Hain, Chow varieties of abelian varieties, {\em
  Bol. Soc. Mat. Mex (3)}{\bf{2}} (1996), 95-99.

\bibitem[EK1]{EK1}
E. J. Elizondo and S. Kimura, Irrationality of motivic series of Chow varieties,
{\em Math. Z.} {\bf{263}} (2009), 27-32.
Also preprint arXiv:math/0706.0931.

\bibitem[EK2]{EK2}
E. J. Elizondo and S. Kimura, 
Rationality of motivic Chow series modulo $A^1$-homotopy,
{\em Adv. in Math.} \textbf{230}, no.~3 (2012), 876-893.
Also preprint arXiv:math/0909.5232.

\bibitem[EKW]{EKW}
E. J. Elizondo, K. Kurano and K. Watanabe,
The total coordinate ring of a normal projective variety,
{\em J. Alg.} {\bf{276}} (2004), 625-637.
Also preprint arXiv:math/0305354.

\bibitem[ELF]{ELF}
E. J. Elizondo and P. Lima-Filho,
Chow quotients and projective bundle formulas for Euler-Chow Series,
{\em J. Alg. Geom.} {\bf{7}} (1998), 695-729.
Also preprint arXiv:math/9804012.

\bibitem[ES]{ES}
E. J. Elizondo and S. Srinivas, Some remarks on Chow varieties and
Euler-Chow series, {\em Journal of Pure and Applied Algebra} {\bf 166}
(2002), No. 1-2, 67-81.

\bibitem[H]{H} R. Hartshorne, {\em Algebraic Geometry},
Springer-Verlag, 1977.

\bibitem[H-T]{H-T}
B. Hassett and Y. Tschinkel, Universal torsors and Cox rings,
{\em Arithmetic of higher-dimensional algebraic varieties (Palo Alto, CA, 2002)},
Progr. Math. 226, Birkhauser Boston, Boston, MA (2004).
Also preprint arXiv:math/0308182.

\bibitem[H-K]{H-K}
Y. Hu and S. Keel, Mori dream spaces and GIT,
{\em Michigan Math. J.} {\bf{48}} (2000), 331–348,
Dedicated to William Fulton on the occasion of his 60th birthday.
Also preprint arXiv:math/0004017v1.

\bibitem[KKT]{KKT}
S. Kimura, S. Kuroda and N. Takahashi,
The closed cone of a rational series is rational polyhedral, to appear in {\em J. Alg.}

\bibitem[Mac]{Mac}
I. G. Macdonald, The Poincar\'e polynomial of a symmetric product, {\em Math. Proc. Cambridge Philos. Soc.} {\bf 58} (1962),
563-568.

\bibitem[Man]{Man}
Y. I. Manin, Cubic forms: algebra, geometry, arithmetic,  {\em North-Holland Publishing Co.}, Amsterdam, 1974.

\bibitem[Mu1]{Mu1}
S. Mukai, Counterexample to Hilbert’s fourteenth problem for the 3-dimensional additive
group. RIMS Kyoto preprint 1343 (2001).

\bibitem[Mu2]{Mu2}
S. Mukai, Geometric realization of T-shaped root systems and counterexamples to Hilbert's fourteenth problem.
In: Algebraic Transformation Groups and Algebraic Varieties. Encyclopaedia Math. Sci., vol. 132, pp. 123-129. Springer, Berlin (2004)

\bibitem[P]{P}
A. Prendergast-Smith, Extremal rational elliptic threefolds,
{\em Michigan Math. J.} {\bf 59} (2010), no. 3, 535-572.
Also preprint arxiv:math/0809.0643v3.

\bibitem[S]{S}
R. Stanley, Decompositions of rational convex polytopes, {\em Annals of Discrete Math.} {\bf 6} (1980), 333-342.
\end{thebibliography}
\end{document}